\documentclass[twoside]{amsart}
\usepackage{amsmath,amssymb,amscd,amsthm,amsxtra,amsfonts}
\usepackage{mathtools,mathrsfs,dsfont,xparse}
\usepackage{multirow}
\usepackage{cases}
\usepackage{enumerate}
\usepackage{xcolor}
\usepackage[margin=1 in]{geometry}
\usepackage{todonotes}
\usepackage{listings}
\usepackage{tcolorbox}
\usepackage{cancel}

\mathtoolsset{showonlyrefs}
\usepackage{circledsteps}

\allowdisplaybreaks[4]



\date{}

\usepackage{multicol,bbm}
\usepackage[colorlinks=true, pdfstartview=FitV, linkcolor=blue, citecolor=blue, urlcolor=blue]{hyperref}

\numberwithin{equation}{section}

\usepackage{cjhebrew}
\usepackage{bm}

\DeclareMathOperator{\re}{Re}
\DeclareMathOperator{\im}{Im}

{\theoremstyle {definition} \newtheorem {defn} {Definition} [section] }
{\theoremstyle {plain}  \newtheorem {thm} [defn] {Theorem}}
{\theoremstyle {plain}  }
{\theoremstyle {plain} \newtheorem {prop} [defn]{Proposition}}
{\theoremstyle {plain} \newtheorem {lem}[defn] {Lemma}}
{\theoremstyle {definition} \newtheorem {rmk}[defn] {Remark}}
{\theoremstyle {plain} }

\def\T{{\mathbb{T}}}
\def\R{{\mathbb{R}}}
\def\cR{{\mathcal{R}}}
\def\C{{\mathbb{C}}}
\def\N{{\mathcal{N}}}
\def\Z{{\mathbb{Z}}}
\def\L{{\mathcal{L}}}

\def\S{{\mathcal{S}}}
\def\P{{\mathcal{P}}}
\def\I{{\mathcal{I}}}

\def\F{{\mathcal{F}}}
\def\e{{\varepsilon}}

\def\j{{\vec{j}}}
\def\FL{{\mathcal{F} L}}

\newcommand{\abs}[1]{\left|#1\right|}
\newcommand{\norm}[1]{\left\|#1\right\|}

\newcommand{\wh}[1]{\widehat{#1}}

\usepackage[numbers]{natbib}

\title{Low-regularity invariant measure for the complex-valued mKdV}

\author[Lee]{Zachary  Lee$^1$}
\address{$^1$  
Department of Mathematics\\ 
University of Texas at Austin\\ 
2515 Speedway, Stop C1200\\
Austin, TX 78712}
\email{zl9868@utexas.edu}

\author[Pavlovi\'c]{Nata\v{s}a Pavlovi\'c$^2$}
\address{$^2$  
Department of Mathematics\\ 
University of Texas at Austin\\ 
2515 Speedway, Stop C1200\\
Austin, TX 78712}
\email{natasa@math.utexas.edu}

\author[Staffilani]{Gigliola Staffilani$^3$}
\address{$^3$ Department of Mathematics\\
Massachusetts Institute of Technology\\ 
77 Massachusetts Avenue,  Cambridge, MA 02139}
\email{gigliola@mit.edu}

\author[Visciglia]{Nicola Visciglia$^4$}
\address{$^4$ Dipartimento di Matematica\\Universit\`a di Pisa\\
Largo Bruno Pontecorvo, 5, 56127,Italy}
\email{nicola.visciglia@unipi.it}

\date{\today}

\begin{document}

\begin{abstract} 
In this paper we consider the twice-renormalized, complex-valued modified KdV (mKdV) on the torus $\T$ introduced by Chapouto. Our main result is the construction of an invariant measure supported at low-regularity. This work complements the work of Kenig et al.\  which  constructed invariant measures supported in higher-regularity spaces for the \emph{non-renormalized} mKdV. Due to the low-regularity of the support of the measure, we are forced to work in Fourier-Lebesgue spaces. The fact that we consider the complex-valued mKdV makes the problem more complicated than the real-valued case, which was previously considered.
\end{abstract}

\maketitle

\setcounter{tocdepth}{1}
\tableofcontents

\parindent = 10pt     
\parskip = 8pt

\section{Introduction}

\subsection{Complex valued mKdV in the context of the $n$NLS hierarchy}
The defocusing/focusing (corresponding to respectively the $+$ and $-$ sign) cubic nonlinear Schrödinger (NLS) equation in one dimension, 
\begin{align} \label{cubic NLS}
    i\partial_t u + \partial_{xx} u = \pm 2 u|u|^2, \quad (t,x)\in \R \times \T, \quad  u(t,x)\in \C
\end{align}
for $\T= \R / 2\pi \Z$, is an example of a completely integrable model. It has infinitely many conservations laws. By  \cite{FaddeevTakhtajan2007}, we can inductively define each of them by 
\begin{align} \label{E_n}
    E_n [u]= \int_\T \bar{u} w_n[u]\,dx,
\end{align}
where $w_n: \S(\T)\to \S(\T)$ is defined as follows:
\begin{align}
    w_1[u] &= u \\
w_{n+1}[u] \;&=\; \,i\,\partial_x w_n[u] \;\pm\; \bar{u} \sum_{k=1}^{n-1} w_k[u]\, w_{n-k}[u]
\quad n \geq 1.
\end{align}
We list the first few conservation laws (here we focus on the $+$ sign for simplicity)
\begin{align}
E_1(u) &= \int_{\mathbb{T}} |u|^2 \, dx, \label{def of E_1} \\
E_2(u) &= \operatorname{Im} \int_{\mathbb{T}} \bar{u} \, \partial_x u \, dx, \\
E_3(u) &= \int_{\mathbb{T}} \big( |\partial_x u|^2 + |u|^4 \big) \, dx, \\
E_4(u) &= \operatorname{Im} \int_{\mathbb{T}} \Big( \partial_x u \, \partial_{xx} \bar{u} + 3|u|^2 u \, \partial_x \bar{u} \Big) \, dx, \\
E_5(u) &= \int_{\mathbb{T}} \Big( |\partial_{xx} u|^2 
         + 6 |\partial_x u|^2 |u|^2 
         + |\partial_x (|u|^2)|^2 
         + 2 |u|^6 \Big) \, dx.
\end{align}
By introducing the symplectic inner product 
\begin{align} \label{omega}
    \omega_{L^2}(f, g) := 2\operatorname{Im} \int_{\T} f(x)\overline{g(x)}\,dx, 
\end{align}
we may consider for each functional $E_n(u)$ the associated Hamiltonian flow 
\begin{align} \label{nNLS}
    \partial_t u(t) = \bm{\nabla} E_n(u(t)),
\end{align}
where $\bm{\nabla}$ is the symplectic gradient. Taking inspiration from \cite{FaddeevTakhtajan2007}, \cite{KNPSV}, we refer to \eqref{nNLS} as the $n$-th nonlinear Schrödinger equation, and we denote it by $(nNLS)$. The nonlinear Schrödinger hierarchy consists of the the whole family $(nNLS)$ for $n\in \mathbb{N}$. If $n=3$, then \eqref{nNLS} reduces to \eqref{cubic NLS}. It is classical (see e.g. \cite{Palais1997}) that the whole set of energies $E_n$ are conserved by the flow of each $(nNLS)$. \par 
This paper is devoted to the analysis of the Cauchy problem associated to \eqref{nNLS} for $n=4$, that is the complex-valued modified KdV (mKdV) 

\begin{align}
\label{mKdV}
        \begin{cases}
            \partial_t u + \partial_x^3 u =\pm 6|u|^2 \partial_x u, \quad (t,x)\in \R\times \T \\
            u(0)=u_0.
        \end{cases}
\end{align}
More precisely, we are interested in performing a probabilistic analysis of the initial-value problem  \eqref{mKdV} by constructing an invariant measure associated to the conservation law $E_3(u)$, the Hamiltonian of \eqref{cubic NLS}. However, as we will explain below, we will need to study a renormalized form of \eqref{mKdV} to make sense of this invariant measure. 
\par 

We note that this paper provides an answer towards the question of obtaining invariance of the $E_3$-based measure for the complex-valued mKdV equation, which was raised by Chapouto in \cite{Chapouto2021}.  
This present work may also be seen as a natural follow-up to \cite{KNPSV}, which studied the Cauchy problem \eqref{mKdV} from a probabilistic point of view by constructing a sequence of measures, each of which associated to a conserved energy $E_{2n+1}(u), n\ge 2$ (where $E_{2n+1}(u)$ is defined as in \eqref{E_n}) and is shown to be invariant under the flow of \eqref{mKdV}. Note that in \cite{KNPSV}, there was no need to gauge the mKdV \eqref{mKdV} as the support of the measures were smooth enough. 
In particular, the measure $\mu_n$ corresponding to $E_{2n+1}(u)$ was supported in $H^s(\T)$ for $s<n-1/2$, implying for example that the measure corresponding to $E_5(u)$ was supported in $H^{3/2-}(\T)$. Note therefore that the regularity of typical functions sampled from the support of the measure was high enough that in \cite{KNPSV} one could freely use energy estimates, which in turn crucially employ the fact that by the Sobolev embedding $H^{s} \hookrightarrow L^\infty, s>1/2$, the solutions are pointwise bounded. This is no longer true of the measure studied in this paper associated to the conservation law $E_3(u)$, which is supported in $H^{1/2-}(\T)$, a space where the above Sobolev embedding fails. 
This fact forces us to use different techniques to carry out the deterministic portion of the proof of our main result, in particular techniques which are inspired by \cite{ChapoutoJDDE} to study \eqref{mKdV} in $X^{s,b}_{p,q}$ spaces.

\subsection{Renormalization of the  mKdV}

In order to make progress towards our goal, we need to appropriately renormalize the complex-valued mKdV \eqref{mKdV}. Indeed, Chapouto in \cite{Chapouto2021} showed that the complex-valued mKdV \eqref{mKdV} is ill-posed in  Fourier-Lebesgue spaces for data with regularity below $H^{1/2}(\T)$. However, this is exactly where the support for the measure corresponding to the conserved quantity $E_3(u)$ lives. On the other hand, Chapouto also showed \cite{Chapouto2021, ChapoutoJDDE} that the Cauchy problem for a suitably renormalized complex-valued mKdV (mKdV2),
\begin{align} \label{mKdV 2Cauchy}
\begin{cases}
    \partial_t u + \partial_x^3 u = \pm 6\left(|u|^2 \partial_x u-M(u)\partial_x u -iP(u)u\right) \quad (t,x)\in \R\times \T,
    \\
    u(0)=u_0
    \end{cases}
\end{align}
where $M(u)$ and $P(u)$ are  the normalized mass and momentum,
\begin{align} \label{momentum}
    M(u) :=\frac{1}{2\pi}\int_\T |u|^2\,dx, \quad 
    P(u) := \frac{1}{2\pi} \operatorname{Im}\int \overline{u}\,\partial_x u\,dx,
\end{align}
\textit{is} locally well-posed in Fourier Lebesgue spaces $\FL^{s,p}(\T)$, defined by the norm 
\begin{align}
    \|u\|_{\FL^{s,p}(\T)}= \| \langle n \rangle^s \hat{u}(n) \|_{\ell^p(\Z)},
\end{align}
for all pairs $(s,p)$ such that $s\ge 1/2$ and $1\le p<\infty$. Notably, this includes $\FL^{s,p}(\T)$ spaces on which the measure is supported. Because of ill-posedness at the relevant low regularities, we study the Cauchy problem associated to the renormalized mKdV2 \eqref{mKdV 2Cauchy}, in place of the original mKdV equation. \par
Note that if  $u\in C(\R, H^{1/2}(\T))$, the momentum is finite and this $u(t,x)$ solves mKdV \eqref{mKdV} if and only if 
\begin{align} \label{Eq linking u and v}
v(t,x) :=  e^{\mp i tP(u)} u(t, x-M(u)t)
\end{align}
solves \eqref{mKdV 2Cauchy}

\subsection{Main Results of this Paper}

In order to formulate our probabilistic result, we begin by recalling the
Gaussian measure $\mu$. On a probability space $(\Omega,\mathcal{A},p)$ with sample space $\Omega$, $\sigma$-algebra $\mathcal{A}$ and probability measure $p$, 
consider a sequence $\{g_j(\omega)\}_{j\in\mathbb{Z}}$ of centered, normalized,
independent identically distributed complex Gaussian random variables. We define the random Fourier series
\begin{equation}\label{eq:random-series}
   \varphi(x,\omega) = \frac{1}{\sqrt{2\pi}}\sum_{j\in\mathbb{Z}} 
   \frac{g_j(\omega)}{ \langle j\rangle }\, e^{ijx},
\end{equation}
where $\langle j \rangle := \sqrt{1 + j^{2}}$.
The distribution of $\varphi$ is precisely the Gaussian measure $\mu$
supported on $H^{1/2-} (\mathbb{T})$ or more generally $\FL^{s,p}(\T)$ for $s<1-1/p$ (see Section~\ref{Section Gaussian Measures}
for further discussion of $\mu$). We recall that Gaussian measures on
infinite-dimensional Hilbert spaces are standard objects (see, for instance,
\cite{Bogachev2015Gaussian}). This was appropriate for the work in \cite{KNPSV} where the data were in $H^s(\T)$.
In the present paper we  will need to  work in the  Banach spaces $\FL^{s,p}(\T)$. This is a similar  situation that the authors faced in \cite{NahmodOhReyBelletStaffilani2012}, where  the same type of Fourier-Lebesgue spaces were considered. We then refer the reader to \cite{NahmodOhReyBelletStaffilani2012} for details of the construction of $\mu$ as a measure supported in $\FL^{s,p}(\T)$.

In addition to the Gaussian measures $\mu$, we also make use of weighted
Gaussian measures. For each $R>0$, introduce the cutoff function
\[
   \chi_R(\cdot) = \chi\!\left(\tfrac{\cdot}{R}\right), 
   \qquad \chi \in C^\infty_0(\mathbb{R},\mathbb{R}), 
   \qquad \chi(x)=1 \ \text{for } |x|<1.
\]
Next, we define the densities
\begin{align} \label{Densities intro}
    F_{R, \pm}(u) := \chi_R(E_1(u)) \exp\left(\mp \int_{\T} |u|^4\,dx\right)
\end{align}
and the associated weighted Gaussian measures
\begin{align} \label{Intro def of rho}
    d\rho_{R, \pm} = F_{R,\pm} (u)d\mu.
\end{align}
The idea to truncate the density using the mass $E_1(u)$ goes back to \cite{LebowitzRoseSpeer1988}. Bourgain showed in \cite{bourgain1994} using a sophisticated proof that the above defined density (and more general ones) $F_{R, -}\in L^q(d\mu)$, for any $1\le q<\infty$ and $R>0$, making the measure $d\rho_{R, -}$ well-defined (it is simpler to see that $d\rho_{R, +}$ is well defined, and one can remove the mass cutoff if desired). In this work, we will in fact present a simplified version of this result that only uses a standard Gagliardo-Nirenberg inequality (see Section \ref{Section Gaussian Measures}).

We can now state the main results of this paper.

\begin{thm} \label{Main Result}
    For every $(s,p)$ such that $1/2\le s <1-1/p, \quad 2<p<\infty$, and for both the focusing ($-$) and defocusing ($+$) signs there exists a Borel set $\Sigma^{s,p} \subset \F L^{s,p}(\mathbb{T})$ such that:
\begin{itemize}
    \item[(i)] $\mu(\Sigma^{s,p}) = 1$;
    \item[(ii)] For every $u_0 \in \Sigma^{s,p}$, there exists a unique, strong and global solution $u\in C(\mathbb{R}, \FL^{s,p}(\mathbb{T}))$ to the equation \eqref{mKdV 2Cauchy} in the sense of Definition \ref{Definition of strong solution} below. Moreover there exists $C > 0$ such that
    \[
 \|u(t,x)\|_{\F L^{s,p}(\mathbb{T})} \leq C \sqrt{C+ \log(1+|t|) }.
    \]
    \item[(iii)] $\Sigma^{s,p}$ is closed under the nonlinear flow $\Phi(t)$ associated with \eqref{mKdV 2Cauchy} established in (ii). Furthermore, for every $R > 0$ the measure $\rho_{R,\pm}$ defined in \eqref{Intro def of rho} is invariant along the flow $\Phi(t)$ restricted on the invariant set $\Sigma^{s,p}$.
\end{itemize}
\end{thm}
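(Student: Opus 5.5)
The plan is to follow Bourgain's invariant measure argument, combined with the deterministic theory of Chapouto in $X^{s,b}_{p,q}$ spaces. We work with the truncated systems obtained by projecting \eqref{mKdV 2Cauchy} onto frequencies $|n|\le N$, that is,
\[
\partial_t u_N + \partial_x^3 u_N = \pm 6 P_{\le N}\left(|u_N|^2\partial_x u_N - M(u_N)\partial_x u_N - iP(u_N)u_N\right),
\]
with data $P_{\le N}u_0$.

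The first step is to check that this finite-dimensional flow is Hamiltonian with respect to $\omega_{L^2}$ restricted to $E_N=\mathrm{span}\{e^{inx}\}_{|n|\le N}$, with Hamiltonian $\tfrac12 E_3(P_{\le N}u)$ modified by multiples of $M^2$ and $P\,M$-type terms. These corrections come from the renormalization and commute with the flow. It also conserves $E_1$. By Liouville's theorem, the measure $\chi_R(E_1(u))e^{\mp\int|P_{\le N}u|^4}\,d\mu_N$ is then invariant under the truncated flow, where $\mu_N$ is the Gaussian measure on $E_N$. The key facts here are that $E_3$ contains $\int|\partial_x u|^2$, which together with $\int|u|^2$ gives the Gaussian weight $\langle n\rangle^2$, and that $\chi_R(E_1)$ is conserved. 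We use the Section~\ref{Section Gaussian Measures} result that $F_{R,\pm}\in L^q(d\mu)$, together with the convergence $F_{R,\pm}(P_{\le N}u)\to F_{R,\pm}(u)$ in $L^q(d\mu)$, to pass to the limit in the measures.

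The second step is the deterministic input. We invoke Chapouto's local well-posedness of \eqref{mKdV 2Cauchy} in $\FL^{s,p}$ for $s\ge 1/2$, proved via trilinear estimates in $X^{s,b}_{p,q}$. These estimates should hold uniformly for the truncated equations and come with a local time $\delta\sim \|u_0\|_{\FL^{s,p}}^{-\kappa}$. We also need an approximation lemma: if $\|u_0\|_{\FL^{s,p}}\le A$ and the truncated solution stays bounded by $A$ on $[-T,T]$, then
\[
\|u(t)-u_N(t)\|_{\FL^{s',p}}\to 0 \quad\text{for } s'<s,
\]
uniformly in such data. Proving this means controlling the high-frequency tail $P_{>N}$ in the trilinear estimate with a gain $N^{-(s-s')}$. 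We expect this step to be the main obstacle. The complex-valued nonlinearity contains resonant interactions that are absent in the real case, and the renormalized terms must cancel them at $s=1/2$, so the estimates are delicate exactly at the endpoint. The plan is to reuse Chapouto's resonance decomposition and to lose only an $\epsilon$ of regularity, working at $s'$ slightly below $s$ while still keeping $s'\ge1/2$ (we choose $s$ strictly inside the range when needed).

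The third step is Bourgain's globalization. By Fernique-type Gaussian tail bounds in $\FL^{s,p}$ (for $s<1-1/p$), we have $\mu(\|u\|_{\FL^{s,p}}>\lambda)\le Ce^{-c\lambda^2}$. Combining this with invariance of $\rho_{N}$ under the truncated flow and a union bound over time steps of length $\delta$, we obtain sets $\Sigma_{N,T,i}$ with $\mu(\Sigma_{N,T,i}^c)\le 2^{-i}$. On these sets
\[
\|u_N(t)\|_{\FL^{s,p}}\lesssim \sqrt{i+\log(1+T)} \quad\text{for } |t|\le T.
\]
Since $\mu$ and $\rho_{R}$ are mutually absolutely continuous on $\{E_1<R\}$ and $R$ is arbitrary, we can work with $\mu$ directly, with the density bounded below on compact pieces. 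Taking $\limsup_N$ and applying the approximation lemma defines $\Sigma^{s,p}=\bigcup_i\bigcap_{T=2^j}\Sigma_{T,i}$, which has full measure. This gives (i), as well as global existence and the $\sqrt{\log}$ bound in (ii); uniqueness comes from the local theory. To get flow invariance of $\Sigma^{s,p}$ we replace it by $\bigcup_t\Phi(t)\Sigma^{s,p}$, or we note that the construction is time-translation stable. Finally, invariance in (iii) follows by testing against bounded continuous functionals $G$ on $\FL^{s',p}$:
\[
\int G(\Phi(t)u)\,d\rho_R=\lim_N\int G(\Phi_N(t)P_{\le N}u)\,d\rho_{R,N}=\lim_N\int G(P_{\le N}u)\,d\rho_{R,N}=\int G\,d\rho_R.
\]
The first equality uses the approximation lemma together with dominated convergence, and the second uses Liouville's theorem. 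Regularity of Borel measures then upgrades this to invariance on $\FL^{s,p}$.
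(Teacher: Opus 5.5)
Your first step has a genuine error, and it carries through to the third step and to (iii). The truncated mKdV2 flow is not the Hamiltonian flow of a modification of $E_3$. In this hierarchy $E_3$ generates the cubic NLS, whereas mKdV is the flow of $E_4$, suitably corrected for the renormalization. Along the full flow, $E_3$ is conserved only because of complete integrability, and the Fourier truncation destroys this.

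For the truncated system you do get preservation of Lebesgue measure on the low modes and conservation of $E_1$, but not conservation of $E_3(\Pi_N u)$. Since $E_3$ is conserved by the untruncated equation, the only contribution to its time derivative comes from the forcing term $\Pi_{>N}(|\Pi_N u|^2\partial_x\Pi_N u)$, and one finds
\[
E_{3,N}^*:=\frac{d}{dt}E_3(\Pi_N\Phi_N(t)u)\Big|_{t=0}=-24\,\re\int \Pi_{>N}\big(|\Pi_N u|^2\partial_x\Pi_N u\big)\,\overline{\Pi_N u}\,|\Pi_N u|^2\,dx .
\]
This is not identically zero, because $|\Pi_N u|^2\Pi_N u$ has frequencies up to $3N$. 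So $\rho_{R,N}$ is \emph{not} invariant under $\Phi_N(t)$. This invalidates two steps: your union bound over time steps, and the middle equality of your chain in (iii), which you attribute to Liouville's theorem.

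What is missing is an almost-invariance result,
\[
\sup_{t\in[0,T],\,A}|\rho_{R,N}(\Phi_N(t)A)-\rho_{R,N}(A)|\to 0 \quad \text{as } N\to\infty.
\]
The paper derives it (Liouville, conservation of $E_1$, Cauchy--Schwarz against the uniform $L^2(d\mu)$ bound on the density, and the flow property to reduce to $t=0$) from the probabilistic estimate $\|\chi_R(E_1(\Pi_N u))^{1/2}E_{3,N}^*\|_{L^2(d\mu)}\to 0$. That estimate is real work your plan does not contain. Inserting the random series turns $E_{3,N}^*$ into a sextilinear Gaussian sum with weight $j_3/\prod_{i=1}^6\langle j_i\rangle$, restricted to $|j_i|\le N$ and $|j_1-j_2+j_3|>N$, which must be split according to pairings. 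Most pairings are $O(N^{-1})$ by crude counting. The pairings $j_3=j_4$ and $j_3=j_6$, where the derivative falls on a paired mode, are not; they need either a cancellation under the imaginary part or a sharper count giving an $N^{-1}\log N$ gain.

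The globalization must then be built around this defect rather than around exact invariance. The error accumulates over the $\sim 2^j\langle D\sqrt{i+j}\rangle^{\beta}$ time steps covering $[-2^j,2^j]$. So the truncation $N=N_j$ must be chosen large depending on $j$, $i$, $R$, rather than taking a $\limsup$ over all $N$. The invariance argument in (iii) likewise has to use that the defect vanishes in the limit.

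A smaller point: $\bigcup_{t\in\R}\Phi(t)\Sigma^{s,p}$ is an uncountable union and need not be Borel, so that shortcut also needs justification. The paper instead proves $\Phi(t)\Sigma_R^{s_{l+1},p}\subset\Sigma_R^{s_l,p}$ along a sequence $s_l\nearrow\bar s$.

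The rest of your outline is in line with the paper: the uniform-in-$N$ local theory from Chapouto's estimates, the approximation lemma with an $N^{-(s-s')}$ gain, the Gaussian tail bounds, and the transfer from $\rho_R$ to $\mu$.
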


\begin{rmk}
 We note admissible pairs $(s,p)$ for which Theorem \ref{Main Result} holds are given as the intersection of pairs $(s,p)$ such that the Cauchy problem \eqref{mKdV 2Cauchy} is locally well-posed in $\FL^{s,p}(\T)$ \cite{ChapoutoJDDE} and the pairs such that $\FL^{s,p}(\T)$ has full $\mu$-measure:
\begin{align}
         \{ (s,p): 
         1/2\le s <1-1/p, 2<p<\infty\}
         = \{ (s,p): 
         s\ge 1/2, 1\le p<\infty\} \cap 
         \{ (s,p): 
         s<1-1/p, 1<p\le \infty\}.
    \end{align}
\end{rmk}
\begin{rmk}
Note that on the real line, we have that the homogeneous Fourier-Lebesgue spaces $\dot{\F}L^{s,p}(\R)$ scale like $\dot{H}^\sigma(\R)$, where $\sigma=s+1/p-1/2$. Letting $s=1/2$ and $2<p<\infty$, we see that $\dot{\F}L^{1/2,p}(\R)$ scales like $\dot{H}^{1/p}(\R)$. From these heuristics, our results may be thought of as living at the scaling of $H^{\varepsilon}(\T)$ for any $0<\varepsilon< 1/2$, almost reaching $L^2(\T)$.
\end{rmk}
The results in Theorem \ref{Main Result} complete the remaining low-regularity case in the recent program of constructing invariant measures for the complex-valued mKdV equation associated with its conservation laws. In particular, it settles the open $E_3$-level problem and broadens the range of probabilistic global dynamics now understood for this model. \par 
\par
Although one may exploit the complete integrability of complex-valued mKdV to obtain deterministic global results (see \cite{ChapoutoJDDE}) with uniform-in-time bounds instead of logarithmic, the approach developed in this work is motivated by a different objective. Namely, we seek methods that rely less heavily on integrable structure and are therefore more likely to extend to non-integrable models, which currently vastly outnumber the integrable ones. \par
One might wonder what the result in Theorem \ref{Main Result} might imply for the original mKdV \eqref{mKdV} itself. While we expect that one can construct for mKdV a candidate measure like in our result for the mKdV2 \eqref{mKdV 2Cauchy}, since even the Cauchy problem for the mKdV \eqref{mKdV} in $\FL^{s,p}(\T)$ is {\it deterministically} ill-posed in the support of the measure $\mu$, the question of the invariance of the aforementioned measure associated to the mKdV  remains.

\subsection{Difficulties and Novelties}
In this subsection, we highlight the difficulties we encountered in our work and discuss how we resolved them by introducing new approaches.

\begin{itemize}

\item In \cite{KNPSV}, the measure corresponding to the conservation law $E_5(u)$ was supported in $H^{3/2-}(\T):=\cap_{s<3/2}H^s(\T)$ (with measures corresponding to higher energies having support in smoother spaces). Note therefore that the regularity of typical functions sampled from the support of the measure was high enough that in \cite{KNPSV} one could freely use energy estimates, which in turn crucially employ the fact that by the Sobolev embedding $H^{s}(\T) \hookrightarrow L^\infty(\T), s>1/2$, the solutions are pointwise bounded. Combined with Strichartz estimates, this allowed the authors of \cite{KNPSV} to obtain bounds for the truncated flow
\begin{align}\label{mKdV Cauchy truncated}
    \begin{cases}
            \partial_t u_{N} + \partial_x^3 u_{N} = \pm \Pi_N (|\Pi_N u|^2 \partial_x \Pi_N u_{N})\\
            u_N(0)=u_{0},
        \end{cases}
\end{align}
where $\Pi_N$ denotes the Fourier cutoff to frequencies $\{|n|\le N\}$. These bounds were uniform in the truncation parameter $N$, allowing the construction of the full non-truncated flow living in the support of the measure. This is no longer true of the measure studied in this paper associated to the conservation law $E_3(u)$, which is supported in $H^{1/2-}(\T)$, a space where the above Sobolev embedding fails, and where the mKdV2 is ill-posed. This fact forces us to use different techniques to carry out the deterministic portion of the proof of our main result, in particular techniques which are inspired by \cite{ChapoutoJDDE, Chapouto2021} to study \eqref{mKdV 2Cauchy} in the Fourier-Lebesgue $\FL^{s,p}(\T)$ and the associated auxiliary Bourgain-type $X^{s,b}_{p,q}(\T)$ spaces.\par

\item While local well-posedness for the Cauchy problem \eqref{mKdV 2Cauchy} was established by Chapouto in \cite{Chapouto2021,ChapoutoJDDE}, in this work we refine those arguments in the setting of a truncated, finite-dimensional problem. This refinement is needed in order to obtain close-to-optimal bounds that are uniform in the truncation parameter, following the approach of \cite{KNPSV}. Obtaining these bounds is much more involved than the analogue in \cite{KNPSV} since we are forced to work with a system of equations (see \cite{ChapoutoJDDE}), with the equation for the additional auxiliary quantity containing a larger number of different cubic, quintic and septic terms all of which have to be carefully estimated in the Bourgain-type spaces. \par

\item During the proof of the main result, it is necessary to understand the convergence of the flow of the truncated problem $u_N(t)$ to that of the original problem $u(t)$. In \cite{KNPSV}, this was done through energy estimates: one first showed that the sequence $u_N(t)$ was a Cauchy sequence in a relevant norm, and showed that the limit satisfied the original equation.
However, because we are working at lower regularities these energy estimates are not available to us. Instead of showing that the finite-dimensional flow is Cauchy in the cutoff parameter to construct the full flow, we choose to directly construct the non-truncated flow via a fixed-point argument. We then work with the difference equation, or system of equations, relating the non-truncated flow to the truncated flow and aim to estimate the difference of the two nonlinearities. To show the desired estimates, we devise an approach for understanding the convergence of the nonlinearities by decomposing them into two different parts, the first of which is estimated by exploiting the fact that there will always be at least one difference input while the second is estimated by proving that it always has at least one high frequency input. We then make use of our previously shown uniform-in-$N$ bounds on the solution in the relevant Bourgain spaces to show the desired convergence.

\item
Lastly, we present a simplified and, to our knowledge, new proof that the density $F_{R,-}$ defined in \eqref{Densities intro},
belongs to $L^q(d\mu)$ for any $1\le q<\infty$ and $R>0$ that only uses a standard Gagliardo-Nirenberg inequality (see Proposition \ref{GN proposition}),
\begin{align} \label{GN specific intro}
        \|u\|_{L^4(\T)} \le \|\langle \nabla \rangle^{1/4} u\|_{L^{8}(\T)}^{2/5} \|u\|_{L^2(\T)}^{3/5},
\end{align}
and hypercontractivity estimates for Gaussian random variables (see \eqref{hypercontractivity}). Note that our proof also extends to densities of the form
\begin{align}
    \chi_R(E_1(u)) \exp\left( \int_{\T} |u|^p\,dx\right)
    \end{align}
for any $2<p<6$.
\end{itemize}

\subsection{Notation}
\begin{itemize}
    \item We denote $B^{s,p}_R$ the ball of radius $R>0$ in $\F L^{s,p}(\T)$
    \item We allow the constants $C>0$ in inequalities such as 
    \begin{align}
        A\le C B
    \end{align}
    to change from line to line.
    \item To shorten the notations we shall write sometimes $\partial_{x} = \partial$.
    \item We shall write $\int f = \int_{\mathbb{T}} f dx$.
    \item We also write $\FL^{s,p} = \FL^{s,p}(\mathbb{T})$
    \item In the paper the functions are always $\mathbb{C}$-valued.
    \item For $x \in \mathbb{R}$ we will use the notation $\langle x \rangle := \sqrt{1 + x^{2}}$.
    \item We denote by $\Phi(t)$ and $\Phi_{N}(t)$ the flows associated with \eqref{mKdV 2Cauchy} and \eqref{mKdV2 truncated}.
    \item For any topological space $X$ we define $\mathcal{B}(X)$ the Borelian subsets of $X$.
\end{itemize}

\subsection*{Acknowledgements} 
ZL is partially supported by the NSF grants DMS-1840314, DMS-2052789 and DMS-2511517 through NP's grants. ZL also acknowledges support by The University of Texas at Austin through the Provost’s Graduate Excellence Fellowship.
NP is partially supported by the NSF grants DMS-1840314, DMS-2052789 and DMS-2511517. GS is partially supported by the NSF grant DMS-2052651 and the Simons Foundation
Collaboration Grant on Wave Turbulence.
This material is based upon work supported by the National Science Foundation under Grant No. DMW-2424139, while ZL, NP and GS were in residence at the Simons Laufer Mathematical Sciences Institute in Berkeley, California during the Fall 2025 semester. 
The authors would like to thank the Simons Laufer Mathematical Institute for their kind hospitality.

\section{Preliminaries}
In this section, we recall various notation and arguments used in \cite{ChapoutoJDDE} which will be useful in Section \ref{Section:FiniteDimApproximations}.
\subsection{Auxiliary spaces}
Let $\S(\R\times \T)$ be the space of functions $u\in C^\infty(\R\times \T)$ which satisfy
\begin{align}
    u(t,x+1)= u(t,x), \quad \sup_{(t,x)\in \R\times \T} |t^\alpha \partial_t^\beta \partial_x^\gamma u(t,x)|< \infty, \quad \forall \alpha, \beta, \gamma \in \Z.
    \end{align}
Bourgain \cite{BourgainGAFA} introduced the following $X^{s,b}$ spaces to study the KdV equation:
\begin{align*}
    \norm{u}_{X^{s,b}} = \norm{\langle n \rangle^s \langle \tau - n^3\rangle^b \wh{u}(\tau, n)}_{\ell ^2_n L^2_\tau}
\end{align*}
We will need the following generalization of these spaces:
\begin{defn}
 Let $s, b \in \mathbb{R}, 1 \leq p, q \leq \infty$. The space $X_{p, q}^{s, b}(\mathbb{R} \times \mathbb{T})$, abbreviated $X_{p, q}^{s, b}$, is defined as the completion of $\mathcal{S}(\mathbb{R} \times \mathbb{T})$ with respect to the norm
$$
\|u\|_{X_{p, q}^{s, b}}=\left\|\langle n\rangle^s\left\langle\tau-n^3\right\rangle^b \widehat{u}(\tau, n)\right\|_{\ell_n^p L_\tau^q}
$$
When $p=q=2$, the $X_{p, q}^{s, b}$-spaces defined above reduce to the standard $X^{s, b}$-spaces. 
\end{defn}

We now define time-localized version of these spaces.  
\begin{defn}\label{spaces}
Let $s, b \in \mathbb{R}, 1 \leq p, q<\infty$ and $I \subset \mathbb{R}$ an interval. We define the restriction space $X_{p, q}^{s, b}(I)$ of all functions $u$ which satisfy
$$
\|u\|_{X_{p, q}^{s, b}(I)}:=\inf \left\{\|v\|_{X_{p, q}^{s, b}}: v \in X_{p, q}^{s, b}(\mathbb{R} \times \mathbb{T}),\left.v\right|_{t \in I}=u\right\}<\infty
$$
with the infimum taken over all extensions $v$ of $u$. If $I=[-T, T]$, for some $0<T \leq 1$, we denote the spaces by $X_{p, q}^{s, b}(T)$. 
\end{defn}

We introduce the following quantities depending on $0<\delta\ll 1$, first introduced in \cite{ChapoutoJDDE}, which in turn depends on $2<p<\infty$. There is the restriction $\delta \lesssim  p^{-2}$ as $p\to\infty$. 
\begin{align}
b_0 &= 1 - 2\delta, & \qquad b_1 &= 1 - \delta, \\
q_0 &= \frac{1}{4\delta}, & \qquad q_1 &= \frac{1}{4.5\delta}, \\
\frac{1}{r_0} &= \frac{1}{2} + \delta, & \qquad \frac{1}{r_1} &= \frac{1}{2} + 2\delta, \\
&& \qquad \frac{1}{r_2} &= \frac{1}{2} + 3\delta.
\end{align}
Following \cite{ChapoutoJDDE}, we also define the following spaces:
\begin{align}
Y_0^s &= X^{s, \frac12}_{p,r_0}(\mathbb{R} \times \mathbb{T})  \label{Y_0^s} \\
Y_1^s &= X^{s, \frac12}_{p,r_1}(\mathbb{R} \times \mathbb{T}), \label{Y_1^s}\\
Z_0^s &= X^{s, b_0}_{p,q_0}(\mathbb{R} \times \mathbb{T}), \label{Z_0^s}\\
Z_1^s &= X^{s, b_1}_{p,q_0}(\mathbb{R} \times \mathbb{T}).\label{Z_1^s}
\end{align}
The spaces $Y_{j}^s(I), Z_{j}^{s}(I)$ for $j=0,1$ are defined similarly as in Definition \ref{spaces}.

Recall the following embedding: for any $1 \le p < \infty$,
\[
X^{s,b}_{p,q}(\mathbb{R} \times \mathbb{T}) \hookrightarrow C(\mathbb{R}; \mathcal{F} L^{s,p}(\mathbb{T})) 
\quad \text{for} \quad b > \frac{1}{q'} = 1 - \frac{1}{q}.
\]
In particular, it holds that $Z_0^s \subset Y_0^s \subset C(\mathbb{R}; \mathcal{F} L^{s, p}(\mathbb{T}))$.
We choose $\varphi:\R \to \R$ to be a smooth function such that 
\begin{align} \label{Definition of phi}
\varphi(t) = 
\begin{cases}
    1, \quad |t| \le 1 \\
    0, \quad |t| \ge 2. 
\end{cases}
\end{align}
For $T>0$, we also define
\begin{align} \label{Definition of rescaled phi}
    \varphi_T(t)=\varphi(t/T).
\end{align}

\subsection{Decomposition of nonlinearity}

The nonlinearity of \eqref{mKdV 2Cauchy} has the following spatial Fourier transform
\begin{align}
\sum_{\substack{n=n_1+n_2+n_3 \\ \Phi(\bar{n}_{123}) \neq 0}} 
i n_1 \widehat{u}(n_1)\widehat{\overline{u}}(n_2)\widehat{u}(n_3)
- i n |\widehat{u}(n)|^2 \widehat{u}(n),
\label{Fourier transform of nonlinearity}
\end{align}
where $\bar{n}_{123} = (n_1, n_2, n_3)$ and $\Phi$ denotes the resonance relation
\[
\Phi(\bar{n}_{123}) = n^3 - n_1^3 - n_2^3 - n_3^3
= 3(n_1+n_2)(n_1+n_3)(n_2+n_3),
\]
where the factorization holds if $n = n_1 + n_2 + n_3$. 

We consider the following operators:
\begin{align}
\mathcal{F}_x(\mathcal{N}\mathcal{R}_{\ge}(u_1,u_2,u_3))(n)
&= \sum_{\substack{n=n_1+n_2+n_3 \\ \Phi(\bar{n}_{123})\neq 0 \\ |n_2|\ge |n_3|}} 
i n_1 \widehat{u}_1(n_1)\widehat{\overline{u}}_2(n_2)\widehat{u}_3(n_3), \label{NR >=}
\\
\mathcal{F}_x(\mathcal{N}\mathcal{R}_{\le}(u_1,u_2,u_3))(n)
&= \sum_{\substack{n=n_1+n_2+n_3 \\ \Phi(\bar{n}_{123})\neq 0 \\ |n_2|\le |n_3|}} 
i n_1 \widehat{u}_1(n_1)\widehat{\overline{u}}_2(n_2)\widehat{u}_3(n_3),
\label{NR >}\\
\mathcal{F}_x(\mathcal{R}(u_1,u_2,u_3))(n)
&= - i n \widehat{u}_1(n)\widehat{\overline{u}}_2(n)\widehat{u}_3(n). \label{R}
\end{align}
As a result, we can write the nonlinearity \eqref{Fourier transform of nonlinearity} as a sum of a  resonant and non-resonant parts:
\begin{align} \label{Nonlinearity of mKdV2 physical space}
\mathcal{N}(u,\overline{u},u)
= \mathcal{N}\mathcal{R}_{\ge}(u,\overline{u},u)
+ \mathcal{N}\mathcal{R}_{\le}(u,u,\overline{u})
+ \mathcal{R}(u,u,u).
\end{align}
If $n_j$ denotes the spatial frequency corresponding to $u_j$, $j=1,2,3$, in
\eqref{NR >=} and \eqref{NR >}, then $|n_2|\ge |n_3|$. We therefore define the following subregions:
\begin{align*}
\mathbb{X}_A &= \{(n_1,n_2,n_3)\in\mathbb{Z}^3 : \Phi(\bar{n}_{123})\ne 0,~|n_2|\ll |n_1|\},\\
\mathbb{X}_B &= \{(n_1,n_2,n_3)\in\mathbb{Z}^3 : \Phi(\bar{n}_{123})\ne 0,~|n_3|\lesssim \min(|n|,|n_1|)\le \max(|n|,|n_1|)\sim|n_2|\},\\
\mathbb{X}_C &= \{(n_1,n_2,n_3)\in\mathbb{Z}^3 : \Phi(\bar{n}_{123})\ne 0,~|n|\lesssim |n_3|\ll |n_1|\},\\
\mathbb{X}_D &= \{(n_1,n_2,n_3)\in\mathbb{Z}^3 : \Phi(\bar{n}_{123})\ne 0,~|n_1|\lesssim |n_3|\}.
\end{align*}

For $\ast \in \{A,B,C,D\}$, we let $\mathcal{N}\mathcal{R}_{\ast,\ge}$ and $\mathcal{N}\mathcal{R}_{\ast,\le}$
denote the restrictions of the operators in \eqref{NR >=} and \eqref{NR >} to $\mathbb{X}_\ast$. 
We can thus write the non-resonant contributions as follows:
\[
\mathcal{N}\mathcal{R}_{\ge} = \mathcal{N}\mathcal{R}_{A,\ge} + \mathcal{N}\mathcal{R}_{B,\ge} 
+ \mathcal{N}\mathcal{R}_{C,\ge} + \mathcal{N}\mathcal{R}_{D,\ge},
\quad
\mathcal{N}\mathcal{R}_{\le} = \mathcal{N}\mathcal{R}_{A,\le} + \mathcal{N}\mathcal{R}_{B,\le} 
+ \mathcal{N}\mathcal{R}_{C,\le} + \mathcal{N}\mathcal{R}_{D,\le}.
\]
Define the Duhamel operator $D$, and its truncated version $\mathcal{D}$:
\begin{align}
DF(t,x) &= \int_0^t S(t - t')F(t',x)\, dt', \label{Non-truncated D} \\
\mathcal{D}F(t,x) &= \varphi(t) \cdot D(\varphi(t') \cdot F(t',x))(t) 
= \varphi(t) \int_0^t S(t - t')\varphi(t')F(t',x)\, dt'.
\end{align}
In studying the integral formulation of \eqref{mKdV 2Cauchy}, we will need to consider the truncated Duhamel operator applied to the various terms in the nonlinearity \eqref{Nonlinearity of mKdV2 physical space} (see Section \ref{Subsection def of solution}). Following \cite{ChapoutoJDDE}, we make a further decomposition of  non-resonant terms $\mathcal{D} \N \cR_{*, \ge}$ and $\mathcal{D} \N \cR_{*, >}$ into respectively $G_{*, \ge}, \mathbf{B}_{*, \ge}$ and $G_{*, >}, \mathbf{B}_{*, >}$ respectively for $*\in\{A,B\}$. The contributions in $G_{*, \ge},G_{*, >}$ will have enough smoothness to control the derivative in the nonlinearity. In order to execute this decomposition, we first introduce a Schwartz-class function $\eta$ satisfying
\begin{align}
    \hat{\eta}(-1)=0, \quad \mathcal{H}\hat{\eta}(-1)=-1, 
\end{align}
where $\mathcal{H}$ denotes the Hilbert transform.
We define the operators $G_{*, \ge},\mathbf{B}_{*, \ge}$ via their Fourier transforms:
\begin{align}
\mathcal{F}_x(G_{\ast,\ge}(u_1,u_2,u_3))(t,n) &= \varphi(t) 
   \sum_{\substack{n=n_1+n_2+n_3 \\ \bar{n}_{123} \in \mathbb{X}_\ast(n), \\ |n_2|\ge |n_3|}} 
   i n_1 \int_0^t e^{i(t-t')n^3} 
   \eta(\Phi(\bar{n}_{123})(t-t')) \\
   &\times 
   \varphi(t') 
   \prod_{j=1}^3 \widehat{u}_j(t',n_j)\, dt', \label{Definition of G_(*,>=)} \\ 
\mathcal{F}_x(\mathbf{B}_{\ast,\ge}(u_1,u_2,u_3))(t,n) &= \varphi(t) 
   \sum_{\substack{n=n_1+n_2+n_3 \\ \bar{n}_{123} \in \mathbb{X}_\ast(n), \\ |n_2|\ge |n_3|}} 
   i n_1 \int_0^t e^{i(t-t')n^3} 
   \bigl[1 - \eta(\Phi(\bar{n}_{123})(t-t'))\bigr] 
   \varphi(t') 
   \prod_{j=1}^3 \widehat{u}_j(t',n_j)\, dt'.
\end{align}
with similar definitions for $G_{*, >},\mathbf{B}_{*, >}$ but with the conditions $|n_2|>|n_3|$ replacing $|n_2|\ge |n_3|$ in the sums. 
\subsection{Multilinear estimates }
We state the following estimates concerning the operators $G_{*, >}, G_{*, \ge}$. 

\begin{lem} \label{Estimates for G}
The following estimates hold for $s\ge 1/2, 2\le p<\infty$,
    \begin{align}
\| G_{A, \#}(u_1,u_2,u_3) \|_{Y_1(T)} &\lesssim \|u_1\|_{Z_0(T)} \|u_2\|_{Y_0(T)} \|u_3\|_{Y_0(T)}, \\
\| G_{B, \#}(u_1,u_2,u_3) \|_{Y_1(T)} &\lesssim \|u_1\|_{Z_0(T)} \|u_2\|_{Z_0(T)} \|u_3\|_{Y_0(T)}.
\end{align}
for both $\#\in\{>, \ge\}$.
\end{lem}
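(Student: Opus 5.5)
This is essentially Chapouto's estimate from \cite{ChapoutoJDDE}. The plan is to reduce it to a pointwise multiplier bound in Fourier variables and then sum using the non-resonant factorization of $\Phi$.

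\textbf{Step 1: remove the time restriction.} By Definition \ref{spaces} we may replace each $u_j$ by an extension on $\R\times\T$ whose norm is within a factor $2$ of the restricted norm. Since $G_{*,\#}$ already contains the cutoffs $\varphi(t)$ and $\varphi(t')$, it then suffices to prove the global bounds
\begin{align}
\|G_{*,\#}(u_1,u_2,u_3)\|_{Y_1^s}\lesssim \|u_1\|_{Z_0^s}\|u_2\|_{W^s}\|u_3\|_{Y_0^s},
\end{align}
with $W=Y_0$ for $*=A$ and $W=Z_0$ for $*=B$.

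\textbf{Step 2: compute the space-time Fourier transform of $G$.} Write $\widehat{u_j}(t',n_j)=\int e^{it'\tau_j}\widehat{u_j}(\tau_j,n_j)\,d\tau_j$ and set $\sigma=\tau_1+\tau_2+\tau_3-n^3$. The $t'$-integral $\int_0^t e^{i(t-t')n^3}\eta(\Phi(t-t'))\varphi(t')e^{it'(\tau_1+\tau_2+\tau_3)}\,dt'$ is a smooth truncation of the time-frequency projection onto $|\sigma|\sim|\Phi|$. The conditions $\hat\eta(-1)=0$ and $\mathcal H\hat\eta(-1)=-1$ are exactly what make the kernel behave like $\langle \sigma\rangle^{-1}$ localized to $|\sigma|\lesssim |\Phi|$, with rapid decay away from the output modulation $\tau-n^3$. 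This yields the pointwise bound
\begin{align}
|\mathcal F_{t,x}G(\tau,n)|\lesssim \sum_{n_1+n_2+n_3=n}|n_1|\int \frac{K(\tau-\tau_{123},\Phi)}{\langle \Phi\rangle}\prod_j|\widehat{u_j}(\tau_j,n_j)|\,d\tau_1d\tau_2,
\end{align}
where $K$ is an integrable Schwartz-type kernel. The gain $1/\langle\Phi\rangle$ is the smoothing that controls the derivative.

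\textbf{Step 3: bound the multiplier and sum.} With $|\Phi|=3|n_1+n_2||n_1+n_3||n_2+n_3|$, the two regions give the following.
\begin{itemize}
\item[(a)] In $\mathbb X_A$ we have $|n_2|\ll|n_1|$ and $|n_3|\le|n_2|$. Then $|n_1+n_2|\sim|n_1+n_3|\sim|n_1|$, so $|n_1|/|\Phi|\lesssim |n_1|^{-1}\langle n_2+n_3\rangle^{-1}$, which leaves a whole derivative of gain.
\item[(b)] In $\mathbb X_B$ we have $|n_2|\sim\max(|n|,|n_1|)$. Two factors of $\Phi$ are comparable to $|n_2|$ only generically; one can degenerate. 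This is why $u_2$ must be placed in $Z_0$, whose higher modulation weight $b_0=1-2\delta$ absorbs the loss when the modulation of $u_2$ dominates.
\end{itemize}
We then place the output in $\ell^p_n L^{r_1}_\tau$ with weight $\langle n\rangle^s\langle\tau-n^3\rangle^{1/2}$. Using $s\ge1/2$ and $\langle n\rangle^s\lesssim\langle n_1\rangle^s\langle n_2\rangle^s\langle n_3\rangle^s$, we distribute the remaining weights. We apply Young's inequality in $\tau$, using $1/r_0+1/r_0+1/q_0'$ versus $1/r_1$ to account for the $\delta$ margins. In $n$ we apply Hölder or Young's inequality with the counting bound $\#\{n_2: |n_1+n_2||n_2+n_3|\sim M\}\lesssim M^{\epsilon}$ (divisor bound) to sum in $\ell^p$.

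\textbf{Main obstacle.} I expect the hard part to be the $\ell^p$ summation in $n$ for $p>2$ together with the $L^r$-Young exponent bookkeeping. Because $\ell^p$ does not combine with convolution as well as $\ell^2$, one needs an $\ell^{p'}$ bound of the symbol $\langle n_2+n_3\rangle^{-1}\langle n_2\rangle^{-s}\langle n_3\rangle^{-s}$ in the summed variables, uniformly in the fixed variables. This is where $s\ge1/2$ is sharp and where the restriction $\delta\lesssim p^{-2}$ is consumed. The degenerate subcase of $\mathbb X_B$ where $n_1+n_3$ is small will require a separate dyadic case split on which modulation is largest.
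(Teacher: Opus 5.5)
Your Step 1 is, in substance, the paper's whole proof. The paper takes extensions $\widetilde u_j$ agreeing with $u_j$ on $[-T,T]$, applies the global estimates of Lemma 5.1 in \cite{ChapoutoJDDE}, and takes the infimum over extensions. Those global estimates are exactly the bounds you display, with $W=Y_0^s$ in $\mathbb{X}_A$ and $W=Z_0^s$ in $\mathbb{X}_B$.

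Your stated reason for the reduction is off, though. What makes it work is causality of the Duhamel integral, not the cutoffs. For $|t|\le T$ the integral $\int_0^t$ only samples $t'\in[-T,T]$. Hence $G_{*,\#}(\widetilde u_1,\widetilde u_2,\widetilde u_3)$ coincides with $G_{*,\#}(u_1,u_2,u_3)$ on $[-T,T]$ and is an admissible extension, which gives $\|G_{*,\#}(u_1,u_2,u_3)\|_{Y_1(T)}\le\|G_{*,\#}(\widetilde u_1,\widetilde u_2,\widetilde u_3)\|_{Y_1}$. The cutoffs $\varphi(t),\varphi(t')$ are needed for the global bound itself. Once this is said, you should quote Chapouto's lemma and stop.

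If Steps 2--3 are meant to replace that citation, they do not constitute a proof, and some of their claims are wrong.

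The counting bound $\#\{n_2:|n_1+n_2||n_2+n_3|\sim M\}\lesssim M^{\epsilon}$ is false. For fixed $n_1,n_3$ the product equals $(n_2+\frac{n_1+n_3}{2})^2-\frac{(n_1-n_3)^2}{4}$, so for $M\gg|n_1-n_3|^2$ the set has about $M^{1/2}$ elements. The divisor bound controls representations of a \emph{fixed} integer, not of a dyadic range.

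The exponent bookkeeping for Young's inequality does not balance. You have $\frac{2}{r_0}+\frac{1}{q_0'}=2-2\delta$, whereas a threefold convolution bound into $L^{r_1}_\tau$ needs exponents with $\frac1{p_1}+\frac1{p_2}+\frac1{p_3}=2+\frac1{r_1}=\frac52+2\delta$.

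Beyond that, the kernel bound of Step 2 is asserted rather than derived, including how $\widehat\eta(-1)=0$ and $\mathcal H\widehat\eta(-1)=-1$ enter. The treatment of $\mathbb X_B$ and of the $\ell^p_n$ summation, which you yourself flag as the main obstacle, is only described, not carried out. So your write-up leaves the trilinear estimate itself unproved. The gap closes exactly as in the paper, by invoking Lemma 5.1 of \cite{ChapoutoJDDE} after your Step 1.
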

\begin{proof}
This lemma follows from the following estimates given in Lemma 5.1 of Section 5 in \cite{ChapoutoJDDE}. 
\begin{align}
    \| G_A(\widetilde{u}_1,\widetilde{u}_2,\widetilde{u}_3) \|_{Y_1} 
&\lesssim \|\widetilde{u}_1\|_{Z_0} \, \|\widetilde{u}_2\|_{Y_0} \, \|\widetilde{u}_3\|_{Y_0}, \\
\| G_B(\widetilde{u}_1,\widetilde{u}_2,\widetilde{u}_3) \|_{Y_1} 
&\lesssim \|\widetilde{u}_1\|_{Z_0} \, \|\widetilde{u}_2\|_{Z_0} \, \|\widetilde{u}_3\|
\end{align}
applied to extensions $\widetilde{u_j}$ of $u_j$ that agree with $u_j$ on $[-T,T]$. Then, we may take the infimum over all such extension on both sides to conclude the lemma.
\end{proof}
Due to the simple estimate
\begin{align}
        \norm{\Pi_N v}_{X^{s,b}_{p,q}} \le \|v\|_{X^{s,b}_{p,q}}
\end{align}
which holds uniformly in $N$,  we also have the following result:
\begin{lem}
The following estimates holds 
    \begin{align}
\| \Pi_NG_A(\Pi_N u_1,\Pi_{N}u_2,\Pi_{N}u_3) \|_{Y_1} &\lesssim \|\Pi_{N}u_1\|_{Z_0} \|\Pi_{N}u_2\|_{Y_0} \|\Pi_{N}u_3\|_{Y_0}, \\
\| \Pi_{N}G_B(\Pi_{N}u_1,\Pi_{N}u_2,\Pi_{N}u_3) \|_{Y_1} &\lesssim \|\Pi_{N}u_1\|_{Z_0} \|\Pi_{N}u_2\|_{Z_0} \|\Pi_{N}u_3\|_{Y_0}.
\end{align}
\end{lem}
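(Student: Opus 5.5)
The estimates follow by composing Lemma \ref{Estimates for G} with the uniform bound $\norm{\Pi_N v}_{X^{s,b}_{p,q}} \le \|v\|_{X^{s,b}_{p,q}}$. The first step is to justify that bound. The operator $\Pi_N$ acts on the space-time Fourier transform by multiplication with the indicator $\mathbf 1_{\{|n|\le N\}}$, which is independent of $\tau$. Hence
\[
\norm{\Pi_N v}_{X^{s,b}_{p,q}}=\norm{\mathbf 1_{\{|n|\le N\}}\langle n\rangle^s\langle\tau-n^3\rangle^b\wh v(\tau,n)}_{\ell^p_nL^q_\tau}\le\norm{v}_{X^{s,b}_{p,q}},
\]
since removing some values of $n$ can only decrease an $\ell^p_n$ norm. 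This holds for every $s,b,p,q$, and in particular for $Y_0,Y_1,Z_0,Z_1$. The same reasoning applies on the restricted spaces $X^{s,b}_{p,q}(T)$: if $v$ is an extension of $u$, then $\Pi_N v$ is an extension of $\Pi_N u$, so taking the infimum over extensions gives the bound there too. I am reading the statement as being about the global spaces, with $G_A$ and $G_B$ standing for either choice $\#\in\{>,\ge\}$.

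The second step is to apply the projection bound with $Y_1$ to the output and then invoke Lemma \ref{Estimates for G}, or more precisely the global estimates from Lemma 5.1 of \cite{ChapoutoJDDE} quoted in its proof, with inputs $\tilde u_j=\Pi_N u_j$. For $G_A$ this reads
\[
\|\Pi_N G_A(\Pi_N u_1,\Pi_N u_2,\Pi_N u_3)\|_{Y_1}\le\|G_A(\Pi_N u_1,\Pi_N u_2,\Pi_N u_3)\|_{Y_1}\lesssim\|\Pi_N u_1\|_{Z_0}\|\Pi_N u_2\|_{Y_0}\|\Pi_N u_3\|_{Y_0}.
\]
The $G_B$ case is identical, with $Z_0$ replacing $Y_0$ on the second input.

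The implicit constants are those of Lemma \ref{Estimates for G}, so they are uniform in $N$. Each $\Pi_N u_j$ lies in the relevant space whenever $u_j$ does, by the first step, so the lemma applies. There is essentially no obstacle beyond checking that Chapouto's trilinear estimate is stated for arbitrary inputs in these spaces, with no structural assumption that the projection could break. That holds, because $\Pi_N u_j$ is an admissible input like any other.

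If instead one wants the estimate in the form with $\Pi_N$ only on the inputs, the projection bound also allows dropping $\Pi_N$ from the right-hand side, giving $\lesssim\|u_1\|_{Z_0}\|u_2\|_{Y_0}\|u_3\|_{Y_0}$.
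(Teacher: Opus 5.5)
Your proposal is correct and matches the paper's argument. The paper obtains this lemma in one line from the $N$-uniform bound $\|\Pi_N v\|_{X^{s,b}_{p,q}}\le\|v\|_{X^{s,b}_{p,q}}$ applied to the output, combined with Chapouto's global trilinear estimates (Lemma \ref{Estimates for G}) applied to the inputs $\Pi_N u_j$. Your check that $\Pi_N$ acts as the $\tau$-independent Fourier multiplier $\mathbf 1_{\{|n|\le N\}}$ supplies the detail the paper leaves implicit.
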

We will also need the following lemma from \cite{ChapoutoJDDE}. 
\begin{lem} \label{Lemma for F} 
Let $Y_j^s, Z_j^s, j\in\{0,1\}$ be as defined in \eqref{Y_0^s}, \eqref{Y_1^s}, \eqref{Z_0^s}, \eqref{Z_1^s}. Suppose that $f$ is a smooth function such that $f(0)=0$. Then, we have the following estimates
$$
\begin{aligned}
& \left\|\varphi_T \cdot f\right\|_{Y_0^s(T)} \lesssim T^\theta\|f\|_{Y_1^s(T)} \\
& \left\|\varphi_T \cdot f\right\|_{Z_0^s(T)} \lesssim T^\theta\|f\|_{Z_1^s(T)}
\end{aligned}
$$
for any $0<\theta \leq \frac{\delta}{2}$ and $0<T \leq 1$
\end{lem}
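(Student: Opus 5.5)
The plan is to reduce both estimates to one-dimensional statements in the time variable, one for each fixed spatial frequency, and then use $f(0)=0$ to control low temporal frequencies. Let $\widetilde f$ be any extension of $f$ with $\|\widetilde f\|_{Y_1^s}\le 2\|f\|_{Y_1^s(T)}$. Since $\widetilde f=f$ on $[-T,T]\ni 0$, we still have $\widetilde f(0)=0$. Also $\varphi_T\widetilde f$ is an extension of $\varphi_T f$ restricted to $[-T,T]$. It therefore suffices to prove the global estimate
\begin{align}
\|\varphi_T \widetilde f\|_{Y_0^s}\lesssim T^\theta \|\widetilde f\|_{Y_1^s}
\end{align}
for functions vanishing at $t=0$.

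Fix $n$ and set $g_n(t)=e^{-itn^3}\widehat{\widetilde f}(t,n)$. Multiplication by $\varphi_T$ commutes with this conjugation, and $\widehat{g_n}(\tau)=\widehat{\widetilde f}(\tau+n^3,n)$. Because the norms are $\ell^p_n$ of $\langle n\rangle^s$ times a temporal norm, it suffices to prove the following uniformly in $n$:
\begin{align}
\|\langle\tau\rangle^{1/2}\widehat{\varphi_T g}\|_{L^{r_0}_\tau}\lesssim T^\theta\|\langle\tau\rangle^{1/2}\widehat g\|_{L^{r_1}_\tau},\qquad g(0)=0 .
\end{align}
The analogous estimate for the $Z$ spaces has weight exponents $b_0<b_1$ and the same $q_0$. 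There the gain comes from the weight gap $b_1-b_0=\delta$ rather than from a change of Lebesgue exponent, and it is handled by the same decomposition.

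The estimate itself rests on $\widehat{\varphi_T g}=T\widehat\varphi(T\cdot)*\widehat g$ and a split of $\widehat g$ into $|\tau|\ge 1/T$ and $|\tau|<1/T$.

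For the high part I use $\langle\tau\rangle^{1/2}\lesssim\langle\tau-\sigma\rangle^{1/2}+\langle\sigma\rangle^{1/2}$. The term with $\langle\sigma\rangle^{1/2}$ falls on $\widehat g$ and is handled by Young's inequality $L^{a}*L^{r_1}\to L^{r_0}$ with $1/a=1-\delta$. Since $\|T\widehat\varphi(T\cdot)\|_{L^a}\sim T^{1-1/a}=T^{\delta}$, this gives the gain $T^\delta$. The term with $\langle\tau-\sigma\rangle^{1/2}$ falls on the bump, which costs $T^{-1/2}$. This is compensated by $|\sigma|^{-1/2}\le T^{1/2}$ on the high-frequency region, leaving again a factor $T^{\delta}$.

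The low-frequency part is where $g(0)=0$ enters, and I expect it to be the main obstacle. Write $g_{\rm low}(t)=\int_{|\tau|<1/T}\widehat g(\tau)e^{it\tau}\,d\tau$. The condition $\int\widehat g\,d\tau=0$ lets me subtract the constant $\int_{|\tau|<1/T}\widehat g\,d\tau=-\int_{|\tau|\ge 1/T}\widehat g\,d\tau$. This yields $\varphi_T(t)\int_{|\tau|<1/T}\widehat g(\tau)(e^{it\tau}-1)\,d\tau$ plus $\varphi_T$ times a high-frequency tail.

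The first piece is bounded using $|e^{it\tau}-1|\lesssim |t\tau|\lesssim T|\tau|$ on $|t|\lesssim T$. This gives $T$-adapted bump functions with coefficients $T|\tau|\,|\widehat g(\tau)|$. Their $\langle\tau\rangle^{1/2}L^{r_0}$ norms are computed by scaling, and Hölder's inequality on $|\tau|<1/T$ is used to pass from $L^{r_1}$ to $L^1$.

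The tail is bounded by Hölder's inequality in the form $\int_{|\tau|\ge1/T}|\widehat g|\le\|\langle\tau\rangle^{1/2}\widehat g\|_{L^{r_1}}\|\langle\tau\rangle^{-1/2}\|_{L^{r_1'}(|\tau|\ge1/T)}$. Since $r_1'<2$ is close to $2$, this is finite with a factor $T^{1/2-1/r_1'}=T^{2\delta}$. It is multiplied by $\|\varphi_T\|_{\langle\tau\rangle^{1/2}L^{r_0}}\lesssim T^{1/2-1/r_0'}\cdot T^{-1/2}$ up to a harmless power.

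The delicate point is the exponent bookkeeping in these low-frequency bounds, where the $\delta$-gaps between $r_0$ and $r_1$ (and between $b_0$ and $b_1$) must leave a net $T^{\theta}$ with $\theta\le\delta/2$ rather than a loss. That is where I would spend the care.
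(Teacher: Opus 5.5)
Your proof is correct. Its first step is exactly the paper's proof: pass to an extension $\widetilde f$, note that it still vanishes at $t=0$, prove the global bound, and take the infimum. The paper stops there and quotes the global estimates $\|\varphi_T f\|_{Y_0^s}\lesssim T^\theta\|f\|_{Y_1^s}$ and $\|\varphi_T f\|_{Z_0^s}\lesssim T^\theta\|f\|_{Z_1^s}$ from Lemma 2.4 of Chapouto's JDDE paper. You instead prove them: you conjugate by $e^{-itn^3}$ at each frequency, split $\widehat g$ at $|\tau|\sim T^{-1}$, and use $\int\widehat g\,d\tau=g(0)=0$ to rewrite the low part as $\varphi_T\int_{|\tau|<1/T}\widehat g(\tau)(e^{it\tau}-1)\,d\tau$ plus $\varphi_T$ times a tail constant. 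The citation keeps the paper short. Your argument buys several things in exchange:
\begin{itemize}
\item it is self-contained;
\item it shows that the vanishing condition is used only at temporal frequencies $|\tau|<1/T$, and it is genuinely needed there, since $\|\langle\tau\rangle^{1/2}\widehat{\varphi_T}\|_{L^{r_0}}\sim T^{-\delta}$ blows up;
\item it gives the better exponent $\theta=\delta$;
\item it applies to arbitrary, not necessarily smooth, extensions in $Y_1^s$ and $Z_1^s$, which is what the infimum step actually requires.
\end{itemize}

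Since you deferred the bookkeeping, note two slips. First, $r_1'>2$, not $<2$. That is what makes $\int_{|\tau|\ge 1/T}\langle\tau\rangle^{-r_1'/2}\,d\tau$ finite, and your value $T^{1/2-1/r_1'}=T^{2\delta}$ is right. Second, $\|\langle\tau\rangle^{1/2}\widehat{\varphi_T}\|_{L^{r_0}}\sim T^{1-1/r_0}\,T^{-1/2}=T^{-\delta}$, not $T^{1/2-1/r_0'}\,T^{-1/2}$. Read literally, your expression equals $T^{-1/2+\delta}$ and would destroy the gain.

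With the correct value everything closes at $T^{\delta}$:
\begin{itemize}
\item The tail gives $T^{2\delta}\cdot T^{-\delta}$.
\item The main low piece has coefficient $\int_{|\tau|<1/T}T|\tau|\,|\widehat g|\,d\tau\le T\,\|\,|\tau|\,\langle\tau\rangle^{-1/2}\|_{L^{r_1'}(|\tau|<1/T)}\,\|\langle\tau\rangle^{1/2}\widehat g\|_{L^{r_1}}\lesssim T^{2\delta}\|\langle\tau\rangle^{1/2}\widehat g\|_{L^{r_1}}$, again against the bump cost $T^{-\delta}$.
\item In the $Z$ case, the bump costs $T^{1/q_0'-b_0}=T^{-2\delta}$, while the tail and the low coefficient both gain $T^{b_1-1/q_0'}=T^{3\delta}$. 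The high part gains $T^{b_1-b_0}=T^{\delta}$, as you say.
\end{itemize}

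Finally, for the low piece the pointwise bound $|e^{it\tau}-1|\lesssim T|\tau|$ does not by itself control a Fourier-side norm. What you actually need is that $s\mapsto\varphi(s)(e^{isa}-1)/a$, for $0<|a|\le 1$, is a bounded family of Schwartz functions; equivalently, $|\widehat\varphi(\lambda-a)-\widehat\varphi(\lambda)|\lesssim|a|\langle\lambda\rangle^{-10}$. Apply this with $a=T\tau$ after rescaling, and then use Minkowski's inequality in $\tau$.
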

\begin{proof}
    This is proved using the similar estimate given in Lemma 2.4 of Section 2 of \cite{ChapoutoJDDE}:
    \begin{align}
        & \left\|\varphi_T \cdot f\right\|_{Y_0^s} \lesssim T^\theta\|f\|_{Y_1^s} \\
& \left\|\varphi_T \cdot f\right\|_{Z_0^s} \lesssim T^\theta\|f\|_{Z_1^s}
    \end{align}
    by applying it to extensions of $f$, and taken the infimum of both sides.
\end{proof}

\subsection{Definition of solution} \label{Subsection def of solution}

\begin{defn} \label{Definition of strong solution}

We say that 
\[
u \in C([-T, T]; \mathcal{F}L^{s,p}(\mathbb{T}))
\]
is a strong solution of \eqref{mKdV 2Cauchy} with initial data 
$u_0 \in \mathcal{F}L^{s,p}(\mathbb{T})$ on the interval $[-T,T]$ 
if it satisfies the following integral equation, the Duhamel formulation:
\begin{align} \label{mKdV2 Duhamel, Preliminaries}
u(t) = S(t)u_0 + D\mathcal{N}\mathcal{R}(u,\overline{u},u)(t) 
+ D\mathcal{R}(u,u,u)(t). 
\end{align}
for each $t\in[-T,T]$.
\end{defn}
When constructing solutions to the above integral equation in the case that $T$ is large, say $T>1$, we will always first construct solutions on small time intervals $[-\tau+t_0, \tau+t_0], t_0\in (-T,T), 0<\tau < 1$ and concatenate them together. By time-translation invariance, we can reduce to the case $t_0=0$. Therefore, we can always work with the above definition assuming that we are working on a small time interval $[-\tau, \tau]$. Due to the aforementioned remark, denoting our small interval by $[-\tau, \tau]$, we can consider an equivalent formulation of the Duhamel formulation \eqref{mKdV2 Duhamel, Preliminaries} when $T<1$:
\begin{align} \label{time-truncated mKdV2 Duhamel, Preliminaries}
u(t) = \varphi \cdot S(t)u_0 
+ \varphi_\tau \cdot D\mathcal{N}\mathcal{R}(u,\overline{u},u)(t) 
+ \varphi_\tau D\mathcal{R}(u,u,u)(t), \quad \forall t\in[-\tau, \tau]. 
\end{align}
Indeed, we see that if $u$ is a strong solution to \eqref{mKdV 2Cauchy} on $[-T,T]$ with $T<1$, it will also be a solution to \eqref{time-truncated mKdV2 Duhamel, Preliminaries} with $\tau=T$ and vice-versa. This is because for $t\in[-\tau,\tau], 0<\tau<1$, we have that $\varphi(t)=1, \varphi_\tau (t)=1$.  We introduce \eqref{time-truncated mKdV2 Duhamel, Preliminaries} following \cite{ChapoutoJDDE} who proves bounds specific to the time-truncated terms on the right hand side of \eqref{time-truncated mKdV2 Duhamel, Preliminaries}. For the nonlinear terms on the right hand side of \eqref{time-truncated mKdV2 Duhamel, Preliminaries}, we additionally truncate via the parameter $\tau$ to gain a suitable power of $\tau$ when performing the nonlinear estimates. It is not necessary to do this for the linear evolution.

\section{Finite-dimensional approximations} \label{Section:FiniteDimApproximations}

In this section, we introduce a finite-dimensional approximation of the Cauchy problem \eqref{mKdV 2Cauchy}, which we will then use in Section \ref{Section almost invariance} and Section \ref{Section: Proof of Main Theorem} to study the invariance of a particular measure under the flow of \eqref{mKdV 2Cauchy}. We also prove uniform bounds in $N$ in Fourier-Lebesgue spaces for such a  truncated flow on short time intervals depending  only on the norm of the initial data. To accomplish this, we are inspired by the approach taken in \cite{KNPSV}. However, we cannot  use energy estimates as is done in \cite{KNPSV} and instead rely on estimates given in \cite{ChapoutoJDDE}, where the well-posedness of \eqref{mKdV 2Cauchy} was studied in Fourier-Lebesgue spaces via $X^{s,b}_{p,q}$ estimates. To obtain better range of parameters in Fourier-Lebesgue spaces, we study the re-centered truncated flow. This is inspired by \cite{ChapoutoJDDE}, where the method was used for the full (non-truncated) flow.
We also show in Section 2.1 that under an assumption that the Fourier-Lebesgue norm of the truncated flow stays bounded on a large time-interval $[-T,T]$, there exists a solution of the full flow on this time interval, with the former converging to the latter in the topology of $C([-T,T], \F L^{s',p})$ with $1/2\le s'<s$.

\subsection{Bounds uniform in $N$ for the flow $\Phi_N(t)u_0$}
We now consider the following finite-dimensional approximation to the Cauchy problem \eqref{mKdV 2Cauchy}:
\begin{align}\label{mKdV2 truncated}
    \begin{cases}
            \partial_t u + \partial_x^3 u = 6 \Pi_N \,\N(\Pi_N u,\overline{\Pi_N u},\Pi_N u) \quad (t,x)\in \R\times \T \\
            u(0)=u_0,
        \end{cases}
\end{align}
where $\N$ is given as in \eqref{Nonlinearity of mKdV2 physical space} corresponds to the right hand side of \eqref{mKdV 2Cauchy} and  $\Pi_N$ is the Dirichlet projection
\begin{align} \label{Definition of Pi_N}
    \Pi_N \left( \sum_{n\in \Z } a_n e^{inx}\right) = \sum_{|n|\le N } a_n e^{inx}.
\end{align}
We define 
\begin{align}
    \Pi_{>N} = I- \Pi_N.
\end{align}
In what follows, we denote by $\Phi(t)$ the flow associated with \eqref{mKdV 2Cauchy} and by $\Phi_N(t)$ the flow associated with \eqref{mKdV2 truncated}. \par 
We note that for each fixed $N$, the Cauchy problem \eqref{mKdV2 truncated} is globally well-posed for initial data in $\F L^{s,p}$ for any $s \ge 0, 1\le p\le \infty$. Indeed, the solution splits as 
\begin{align}
    u = \Pi_N u + \Pi_{>N}u,
\end{align}
where $\Pi_N u$ evolves to a system of $2N+1$ nonlinear-ODEs (and hence has a local solution) while $\Pi_{>N}$ evolves according to the free KdV flow, and hence $\norm{\Pi_{>N} u(t)}_{\F L^{s,p}(\T)}=\norm{\Pi_{>N} u_0}_{\F L^{s,p}(\T)}$. To upgrade the local solution to a global solution, we note that
\begin{align*}
    \norm{\Pi_{N} u(t)}_{\F L^{s,p}(\T)} &\lesssim  N^s \norm{ \wh{\Pi_{N}u}(t,n)}_{\ell^p_n} \\
    &\le N^s C(N,p) \norm{\wh{\Pi_{N}u}(t,n)}_{\ell^2_n } \\
    &= N^s C(N,p) \norm{\wh{\Pi_{N}u_0}(n)}_{\ell^2_n } \\
    &\le N^{s} C(N,p) \norm{\wh{\Pi_{N}u_0}(n)}_{\ell^p_n } \\
    &\le N^{s}C(N,p)  \norm{\langle n\rangle^s \wh{u_0}(n)}_{\ell^p_n }
    \end{align*}
using the fact that $\norm{\Pi_{N}u(t,x)}_{L^2_x}=\norm{\Pi_{N}u_0(x)}_{L^2_x}$, which may be directly checked, Plancherel's theorem, and the equivalence of norms for the finite-dimensional normed spaces $\ell^p(\C^{2N+1})$ and $\ell^2(\C^{2N+1})$ for each $1\le p\le \infty$. Since we have a uniform-in-time bound, we can upgrade the local solution to a global one. We note that these bounds blow-up as $N\to\infty$ and thus we need a different argument to obtain bounds uniform in $N$. \par 

We introduce the main result of this section, the proof of which is inspired by \cite{KNPSV} and utilizes the multilinear estimates proved in \cite{ChapoutoJDDE}. 
\begin{prop}\label{S+1/S 2nd half}
Let $\Phi_N(t)u_0$ be the flow associated to \eqref{mKdV2 truncated} and let $(s, p)$ satisfy $s\ge 1/2$, $2 \leq p<\infty$. Then, there exists $\beta, c>0$ depending on $p$ such that for all $S>0$, 
\begin{align} \label{Uniform in N bound}
    \sup_{\substack{u_0\in B^{s,p}_S, N\in \mathbb{N} \\ |t|\le c \langle S\rangle^{-\beta}}} \norm{\Phi_N(t) u_0}_{\F L^{s,p}(\T)} \le S+1/S.
\end{align}
\end{prop}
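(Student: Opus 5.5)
The plan is to run a contraction argument for the truncated problem \eqref{mKdV2 truncated} on a short interval $[-\tau,\tau]$, with $\tau = c\langle S\rangle^{-\beta}$. The argument is carried out in Chapouto's framework $Y_0^s, Z_0^s$ (with the re-centered/auxiliary system of \cite{ChapoutoJDDE}). The key requirement is that every constant be independent of $N$, and this follows from $\|\Pi_N v\|_{X^{s,b}_{p,q}}\le \|v\|_{X^{s,b}_{p,q}}$.

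First I split $u=\Pi_N u+\Pi_{>N}u$. The high part evolves linearly, so $\|\Pi_{>N}S(t)u_0\|_{\FL^{s,p}}=\|\Pi_{>N}u_0\|_{\FL^{s,p}}$. For the low part I write the time-truncated Duhamel formulation \eqref{time-truncated mKdV2 Duhamel, Preliminaries} with $\Pi_N$ inserted. I decompose the nonlinearity into $\mathcal R$ and $\mathcal{NR}_{*,\#}$, $*\in\{A,B,C,D\}$, and split the $A,B$ pieces further into $G_{*,\#}+\mathbf B_{*,\#}$.

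The good parts are controlled by the $N$-uniform version of Lemma \ref{Estimates for G}, which places them in $Y_1$. The resonant part $\mathcal R$ and the regions $C,D$ are handled by the corresponding trilinear estimates of \cite{ChapoutoJDDE}, which I take to be the analogues of Lemma \ref{Estimates for G} transported by the same extension/infimum argument. The bad parts $\mathbf B_{*,\#}$ are treated as in \cite{ChapoutoJDDE}: one introduces an auxiliary unknown solving the system whose extra equation contains cubic, quintic and septic terms, all bounded in $Z_1$ by multilinear estimates, again uniformly in $N$ after projecting with $\Pi_N$. Applying Lemma \ref{Lemma for F} to each nonlinear term gains a factor $\tau^\theta$. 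This yields a closed estimate of the form
\begin{align}
\|\Pi_N u\|_{Y_0^s(\tau)\cap Z_0^s(\tau)} \le C\|u_0\|_{\FL^{s,p}} + C\tau^{\theta}\big(\|\Pi_N u\|^3+\|\Pi_N u\|^5+\|\Pi_N u\|^7\big).
\end{align}
With $\tau\sim \langle S\rangle^{-\beta}$ and $\beta$ chosen from the septic power (e.g.\ $\beta=6/\theta$), a continuity/bootstrap argument gives $\|\Pi_N u\|_{Y_0(\tau)}\lesssim \langle S\rangle$, and consequently the nonlinear part satisfies
\begin{align}
\sup_{|t|\le\tau}\|\Pi_N u(t)-\Pi_N S(t)u_0\|_{\FL^{s,p}} \lesssim \tau^\theta\langle S\rangle^7.
\end{align}
The continuity in $\tau$ needed for the bootstrap is available because for fixed $N$ the flow is a smooth finite-dimensional ODE, so the relevant norms vary continuously.

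The sharp constant $S+1/S$, rather than $CS$, comes from not bounding the linear part by $C\|u_0\|$. Since $S(t)$ is an isometry on $\FL^{s,p}$, the triangle inequality gives
\begin{align}
\|\Phi_N(t)u_0\|_{\FL^{s,p}}\le \|u_0\|_{\FL^{s,p}}+C\tau^\theta\langle S\rangle^{7}.
\end{align}
The constant $C$ appears only in the perturbative term. Choosing $c$ small and $\beta$ large enough that $C\tau^\theta\langle S\rangle^7\le 1/S$ (for instance $\beta\theta\ge 8$ together with a bound on $\langle S\rangle^{8}/S\cdot S$; for small $S$ the bound $1/S$ is large, so there is room) completes the proof. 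The embedding $Y_0^s\subset C(\R;\FL^{s,p})$ converts the $Y_0$ bound into a pointwise-in-time $\FL^{s,p}$ bound.

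The main obstacle is the auxiliary quantity handling $\mathbf B_{*,\#}$. One has to check that every cubic, quintic and septic term in its equation, once conjugated by $\Pi_N$ in all the places the truncation introduces, still satisfies Chapouto's estimates. This is delicate because the truncation breaks some of the exact algebraic identities (for example, the normal-form structure relating $\mathbf B$ to the resonance $\Phi$). My first approach would be to define the auxiliary variable directly from the truncated nonlinearity, so that all identities hold with $\Pi_N$ applied to every input. The estimates then reduce to the untruncated ones applied to $\Pi_N u$, with the outer $\Pi_N$ removed by the uniform bound on $\Pi_N$ in $X^{s,b}_{p,q}$.
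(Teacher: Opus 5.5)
This is essentially the paper's proof. It uses the truncated re-centered system $u_N=w_N+\Pi_N F(\Pi_N u_N,\Pi_N w_N)$, $w_N=S(t)u_0+\Pi_N(C+Q+\Sigma)(\Pi_N u_N,\Pi_N w_N)$ with $N$-uniform multilinear bounds, the $\tau^{\delta/2}$ gain from Lemma~\ref{Lemma for F}, and a continuity argument giving $\|w_N\|_{Z_0^s}+\|u_N\|_{Y_0^s}\lesssim S$ on $|t|\le c\langle S\rangle^{-\beta}$. It then makes a second pass in $C([-T,T];\FL^{s,p})$, where the isometry of $S(t)$ keeps the coefficient of $\|u_0\|_{\FL^{s,p}}$ equal to $1$, and a further shrinking of $T$ makes the remainder at most $1/S$. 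The one slip is that your compact estimate should be a coupled bound for $\|u\|_{Y_0^s}$ and $\|w\|_{Z_0^s}$, since the multilinear bounds never place $u$ itself in $Z_0^s$ (that is the point of the re-centering); and the truncation/algebraic-identity issue you flag is not treated in the paper, which simply writes the system with $\Pi_N$ on every input.
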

\begin{rmk} \label{Remark on S+1/S}

As in \cite{KNPSV} the form of the bound $S+1/S$ here is vital to arguments in Section \ref{Section: Proof of Main Theorem}. Namely, it is important that the coefficient in front of $S$ is not larger than 1. In order to obtain such a bound, we need to employ additional arguments compared to the work  in \cite{KNPSV}. In particular, we first obtain bounds in $X^{s,b}_{p,q}$ spaces which give us a bound of the form $C(S+1/S)$, for $C$ possibly larger than 1. We then redo the analysis directly in the ``energy space" $C([-T, T], \F L^{s,p})$, for some $T=T(S,p)>0$ utilizing the embeddings $Y_0^s(T), Z_0^s(T) \hookrightarrow C([-T, T], \F L^{s,p})$, allowing us to obtain the desired $S+1/S$ bound.  
\end{rmk}
The rest of the subsection is devoted to the proof of Proposition \ref{S+1/S 2nd half}. \par

We first recall the main ideas from \cite{ChapoutoJDDE}. In that work, Chapouto constructs local in-time solutions to the renormalized mKdV2 \eqref{mKdV 2Cauchy} via a ``re-centering" method inspired by \cite{DNY21}. More precisely Chapouto turns the original fixed point equation into a system of equations involving $u$ and a new quantity $w$:
\begin{align}
    u&= w + F(u, w) \label{u[w]} \\ 
    w&= S(t)u_0 \pm D\N(u)- F(u, w) \label{w[u]}.
\end{align}
where $D\N(u)$ is defined via \eqref{Nonlinearity of mKdV2 physical space} and \eqref{Non-truncated D}. We will denote the flow related to $w(t)$ by $w(t)=\Psi(t) u_0$. In \cite{ChapoutoJDDE}, Chapouto constructs $u\in X^{s, 1/2}_{p, 2-}$ and $w\in X^{s, 1-}_{p, \infty-}$ locally in time. The expression $F(u, w)$ is a sum of four trilinear operators, with inputs taking values in $\{w, \overline{w}, u, \overline{u}\}$, each one representing the restriction of the non-resonant part of the nonlinearity to specific regions in frequency space. These terms are chosen so that the the smoothing in space they inherit by construction results in control of the derivative in the nonlinearity. Explicitly, we may write
\begin{align}\label{Section 2 forumula for F}
    F(u,w)= \left[
G_{A,\geq}(w,\overline{u},u) 
+ G_{A,>}(w,u,\overline{u}) 
+ G_{B,\geq}(w,\overline{w},u) 
+ G_{B,>}(w,w,\overline{u}) 
\right],
\end{align}
where $G_{\ast, \ge}, G_{\ast, >}, \ast\in\{A,B\}$ are defined in \eqref{Definition of G_(*,>=)}. With this choice of $F$, given $w$, one can use the bounds in Lemma \ref{Estimates for G} to construct a fixed-point argument solving for $u$ in \eqref{u[w]} in $Y_0^s$. However, the fixed-point argument for $w$ in \eqref{w[u]} does not close  due to failure of the appropriate trilinear estimates needed to control the right hand side of \eqref{w[u]}. To resolve this issue,  Chapouto first performs a \textit{partial second iteration}: in the problematic terms, she substitutes the equation \eqref{u[w]} for $u$, leading to quintilinear terms. Some of the new quintilinear terms can be controlled in the relevant norm. However, for others, another partial iteration must be performed and this results in new septilinear terms, all of which have the virtue of satisfying the necessary estimates in the $Z_0^s$ norm. The full equation for $w$, containing all the exact multilinear forms, on which the contraction mapping is performed is too long to include here.  
However, we can write down a compact version of it as follows:
\begin{align} \label{w in terms of C,Q,S}
     w =  S(t) u_0 
+  C(u, w) 
+  Q(u, w) 
+ \Sigma(u, w).
\end{align}
where $C, Q, \Sigma$ denote respectively the trilinear, quintilinear and septilinear terms in the final equation for $w$, with their arguments belonging to $\{u, w, \overline{u}, \overline{w}\}$. Note that here we absorb the defocusing/focusing $\pm$ sign into the definitions of $C, Q, \Sigma$. Since we will be concerned only with the local theory in this section, the sign is immaterial. 

 We can collect the relevant estimates pertaining to $C, Q, \Sigma$ proved in Section 6 of \cite{ChapoutoJDDE}:
\begin{lem}\label{Estimates for C, Q, S}
For the range $s\ge 1/2, 2\le p<\infty$, the following estimates hold true:
\begin{align} 
    \|C(u, w)\|_{Z_1^s} \lesssim \| u\|_{Y_0^s}^3 + \| w\|_{Z_0^s}^3 + \| w\|_{Z_0^s} \| u\|_{Y_0^s}^2 + \| w\|_{Z_0^s}^2 \| u\|_{Y_0^s} \\
     \|Q(u, w)\|_{Z_1} \lesssim \| u\|_{Y_0^s}^4 \| w\|_{Z_0^s}   + \| w\|_{Z_0^s}^2 \| u\|_{Y_0^s}^3 + \| w\|_{Z_0^s}^3 \| u\|_{Y_0^s}^2+ \| w\|_{Z_0^s}^4 \| u\|_{Y_0^s} \\
     \|\Sigma (u, w)\|_{Z_1} \lesssim \| u\|_{Y_0^s}^5 \| w\|_{Z_0^s}^2   + \| u\|_{Y_0^s}^4 \| w\|_{Z_0^s}^3  + \| u\|_{Y_0^s}^3\| w\|_{Z_0^s}^4 + \| u\|_{Y_0^s}^2\| w\|_{Z_0^s}^5 + \| u\|_{Y_0^s}\| w\|_{Z_0^s}^6.
\end{align}
\end{lem}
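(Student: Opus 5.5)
The plan is not to prove new multilinear bounds from scratch. Instead I would unpack the compact notation $C$, $Q$, $\Sigma$ of \eqref{w in terms of C,Q,S} into the explicit multilinear operators produced by Chapouto's partial iterations. Each resulting term should then be matched with an estimate already established in Section 6 of \cite{ChapoutoJDDE}.

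\textbf{Trilinear part $C$.} Subtracting $F(u,w)$ in \eqref{w[u]} removes the smooth pieces $G_{A,\#}$, $G_{B,\#}$. What remains of $D\N(u)$ is:
\begin{itemize}
\item the resonant term $\mathcal{D}\cR(u,u,u)$;
\item the non-resonant contributions restricted to $\mathbb{X}_C$ and $\mathbb{X}_D$;
\item those pieces of regions $A$ and $B$ that were not absorbed into $F$, for instance those with a $u$ rather than a $w$ in the high-frequency slot;
\item the ``bad'' parts $\mathbf{B}_{A,\#}$, $\mathbf{B}_{B,\#}$, which carry the cutoff $1-\eta(\Phi(t-t'))$.
\end{itemize}
Each input slot is occupied by $u$ (or $\overline u$), measured in $Y_0^s$, or by $w$ (or $\overline w$), measured in $Z_0^s$. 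Distributing the slots over the four combinations gives exactly the four monomials $\|u\|^3$, $\|w\|^3$, $\|w\|\|u\|^2$ and $\|w\|^2\|u\|$ in the first estimate. For every term that Chapouto does not iterate, the $Z_1^s$ bound is one of her trilinear estimates. That estimate uses the non-resonance $|\Phi|\gtrsim |n_{\max}|\,|n_{\mathrm{med}}|$ on $\mathbb{X}_C$ and $\mathbb{X}_D$, and the gain from $1-\eta$ for the $\mathbf{B}$ operators.

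\textbf{Quintilinear part $Q$.} These terms come from substituting $u=w+F(u,w)$ into one $u$-slot of a problematic trilinear term. For the quintic terms that Chapouto keeps and does not iterate further, I would cite her quintilinear estimates. They are proved by a direct Fourier-side analysis of the nested Duhamel integrals, using the resonance factorization of both the inner and outer cubic interactions. This is not a black-box composition, since the trilinear bound with an $F$ input fails in general. Counting the inputs gives five factors. There is at least one $w$, because $F$ always has a $w$ in its first slot, and at least one $u$. This yields the monomials $\|u\|^4\|w\|$ through $\|u\|\|w\|^4$.

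\textbf{Septilinear part $\Sigma$.} The second substitution acts in the same way and produces seven-fold products. Each contains at least one $u$ and at least two $w$'s, one from each $F$. The seven-linear $Z_1^s$ estimates of \cite{ChapoutoJDDE} give the last bound.

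\textbf{Main obstacle.} The real difficulty is bookkeeping. One must verify that every term in the long explicit equation for $w$ is covered by one of Chapouto's estimates under exactly these norm assignments. This includes checking that the region restrictions $\mathbb{X}_\ast$ and the conditions $|n_2|\ge|n_3|$ or $|n_2|>|n_3|$ are preserved through the iteration.

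If these bounds are also needed for the truncated system, I would rerun the same matching with $\Pi_N$ inserted. The bounds $\|\Pi_N v\|_{X^{s,b}_{p,q}}\le\|v\|_{X^{s,b}_{p,q}}$, together with the fact that the frequency restrictions commute with $\Pi_N$, should give the estimates uniformly in $N$.
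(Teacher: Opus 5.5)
Your proposal takes the same route as the paper, which gives no independent argument for this lemma and simply collects the corresponding $Z_1^s$ bounds from Section 6 of \cite{ChapoutoJDDE}; your unpacking of $C$, $Q$, $\Sigma$ is heuristic framing around that same citation. One side remark in that framing is inaccurate: the bound $|\Phi|\gtrsim |n_{\max}|\,|n_{\mathrm{med}}|$ fails on $\mathbb{X}_D$ (for example $(n_1,n_2,n_3)=(M,-M-1,M)$ gives $\Phi=6M$), but the cited-estimate argument does not depend on it.
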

We are now ready to present the proof of Proposition \ref{S+1/S 2nd half}.
\begin{proof}[Proof of Proposition \ref{S+1/S 2nd half}]
For our purposes, we will consider the following decomposition, calling $u_N(t)$ the solution to the flow \eqref{mKdV2 truncated}, 
\begin{align} 
    u_N&= w_N + \Pi_N F(\Pi_N u_N, \Pi_N w_N) \label{u_N[w]} \\ 
    w_N&= S(t)u_0 +  \Pi_N C(\Pi_N u_N, \Pi_N w) 
+  \Pi_N Q(\Pi_N u_N , \Pi_N w_N) 
+ \Pi_N \Sigma(\Pi_N u_N , \Pi_N w_N )\label{w_N[u]}.
\end{align}
We denote the flow related to the above equations by $(w_N(t), u_N(t))= (\Psi_N(t) u_0, \Phi_N(t) u_0)$.

To enable us to use the estimates established in \cite{ChapoutoJDDE}, we recast our system of equations for $u_N, w_N$ as follows:
\begin{align} \label{equation for u_n with cutoffs}
      u_N &= w_N + \varphi_\tau\left(\Pi_N F(\Pi_N u_N, \Pi_N w_N)\right) \\ 
w_N &= \varphi S(t)u_0 + \varphi_\tau \left(\Pi_N C(\Pi_N u_N, \Pi_N w) 
+  \Pi_N Q(\Pi_N u_N , \Pi_N w_N) 
+ \Pi_N \Sigma(\Pi_N u_N , \Pi_N w_N )\right). \label{equation for w_n with cutoffs}
\end{align}
Solving \eqref{u_N[w]}, \eqref{w_N[u]} for $t\in[-\tau,\tau], 0<\tau<1$ is equivalent to solving the above pair of equations for $t\in[-\tau,\tau]$.

From Lemma \ref{Estimates for G}, we may deduce the following bound:
\begin{align} \label{bound for u_N Proposition 3.1}
    \|  u_N\|_{Y_0(\tau)}&\le C \| w_N\|_{Y_0(\tau)} + C\tau^{\delta/2}\left( \| \Pi_N u\|_{Y_0^s(\tau)}^2 \|\Pi_N w\|_{Z_0^s(\tau)} + \| \Pi_N u\|_{Y_0^s(\tau)} \| \Pi_N w\|_{Z_0^s(\tau)}^2\right) \\
    &\le  C \| w_N\|_{Z_0^s(\tau)} + C\tau^{\delta/2}\left( \| \Pi_N u\|_{Y_0^s(\tau)}^2 \| \Pi_N w\|_{Z_0^s(\tau)} + \| \Pi_N u\|_{Y_0^s(\tau)} \|\Pi_N w\|_{Z_0^s(\tau)}^2\right),
\end{align}
where we used the continuous embedding $Z_0^s \hookrightarrow Y_0^s$. 
The heart of the proof will consist of proving the control
\begin{align} \label{Final Z_0s bound for w_N}
    \sup_{\substack{u_0\in B^{1/2,p}_S, N\in \mathbb{N} }} \norm{w_N}_{Z_0^s(I)} \le 2C'S .
\end{align}
for some interval $I$. We note that such a bound combined with a standard bootstrap argument applied to \eqref{bound for u_N Proposition 3.1} yields
\begin{align} \label{Bounds on u given bounds on w}
    \sup_{\substack{w\in B^{Z_0}_S, N\in \mathbb{N} }} \norm{u_N(t)}_{Y_0^s(J_S\cap I)} < 4CC'S
\end{align}
where
\begin{align} \label{definition for J_S}
    J_S\equiv &\bigg\{T: |T|^{\delta/2} \big((2C'S)^2 (4CC'S) 
+ (2C'S) (4CC'S)^2 
\big) \le S \bigg\}.
\end{align}

We now focus on proving \eqref{Final Z_0s bound for w_N}. The equation for $w$ given the form of $F(u, w)$ takes the form \eqref{equation for w_n with cutoffs}. \par 
By a direct calculation, and remembering the definition of $Z_0^s$ in \eqref{Z_0^s},  
\begin{align}
    \| \varphi S(t) u_0\|_{Z_0^s(\tau)} &\le  \| \varphi S(t) u_0\|_{Z_0^s(\R)} \\
    &= \| \langle n\rangle^s \langle \lambda -n^3 \rangle^{1-\delta} \widehat{\varphi S(t) u)}(n,\lambda )  \|_{\ell^p_n L^{1/4\delta}_\lambda} \\
    &=\|  \langle n\rangle^s\langle \lambda -n^3 \rangle^{1-\delta} \widehat{\varphi} * \delta(\cdot-n^3) \widehat{u_0}(n)(n,\lambda )  \|_{\ell^p_n L^{1/4\delta}_\lambda} \\
    &= \|  \langle n\rangle^s\langle \lambda -n^3 \rangle^{1-\delta} \widehat{\varphi}(\lambda-n^3)\widehat{u_0}(n)(n,\lambda )  \|_{\ell^p_n L^{1/4\delta}_\lambda} \\
    &= \|   \langle \lambda \rangle^{1-\delta} \widehat{\varphi}(\lambda)  \|_{ L^{1/4\delta}_\lambda} \|u_0\|_{\F L^{s,p}} \\
    &\le  \|\langle \lambda \rangle \widehat{\varphi}(\lambda)  \|_{ L^{1/4\delta}_\lambda} \|u_0\|_{\F L^{s,p}} \\
\end{align}
By interpolation and Young's inequality for products, assuming as we may that $0<\delta\ll 1$,
\begin{align}
    \|\langle \lambda \rangle \widehat{\varphi}(\lambda)  \|_{ L^{1/4\delta}_\lambda} &\le \|\langle \lambda \rangle \widehat{\varphi}(\lambda)  \|_{ L^{1}_\lambda}^{4\delta} \|\langle \lambda \rangle \widehat{\varphi}(\lambda)  \|_{ L^{\infty}_\lambda}^{1-4\delta}  \\
    &\le 4\delta \|\langle \lambda \rangle \widehat{\varphi}(\lambda)  \|_{ L^{1}_\lambda} + (1-4\delta) \|\langle \lambda \rangle \widehat{\varphi}(\lambda)  \|_{ L^{\infty}_\lambda} \\
    &\le  \|\langle \lambda \rangle \widehat{\varphi}(\lambda)  \|_{ L^{1}_\lambda} + \|\langle \lambda \rangle \widehat{\varphi}(\lambda)  \|_{ L^{\infty}_\lambda} \\
    &= \| \langle \lambda \rangle \widehat{\varphi}(\lambda)\|_{L^1_\lambda \cap L^\infty_\lambda}.
\end{align}
Therefore, there exists $C>0$ depending on $\varphi$ such that 
\begin{align}
    \| \varphi S(t) u_0\|_{Z_0^s(\tau)} \le C \|u_0\|_{\F L^{s,p}}.
\end{align}
Next, we have that
\begin{align} 
\begin{aligned}
   \| w_N\|_{Z_0^s(\tau)} &\le  C'\|u_{0}\|_{\mathcal{F}L^{s,p}} 
+ C'\tau^{\delta/2} \left(
\| \Pi_N u_N\|_{Y_0^s(\tau)}^3
+ \| \Pi_N w_N\|_{Z_0^s(\tau)}^3 
+ \| \Pi_N w_N\|_{Z_0^s(\tau)}^2 \| \Pi_N u_N\|_{Y_0^s(\tau)} \right.\\
&\quad
+ \| \Pi_N w_N\|_{Z_0^s(\tau)} \| \Pi_N u_N\|_{Y_0^s(\tau)}^2 
+ \| \Pi_N u_N\|_{Y_0^s(\tau)}^4 \| \Pi_N w_N\|_{Z_0^s(\tau)} 
+ \| \Pi_N w_N\|_{Z_0^s(\tau)}^2 \| \Pi_N u_N\|_{Y_0^s(\tau)}^3 \\
&\quad
+ \| \Pi_N w_N\|_{Z_0^s(\tau)}^3 \| \Pi_N u_N\|_{Y_0^s(\tau)}^2 
+ \| \Pi_N w_N\|_{Z_0^s(\tau)}^4 \| \Pi_N u_N\|_{Y_0^s(\tau)} 
+ \| \Pi_N u_N\|_{Y_0^s(\tau)}^5 \| \Pi_N w_N\|_{Z_0^s(\tau)}^2 \\
&\quad
+ \| \Pi_N u_N\|_{Y_0^s(\tau)}^4 \| \Pi_N w_N\|_{Z_0^s(\tau)}^3 
+ \| \Pi_N u_N\|_{Y_0^s(\tau)}^3 \| \Pi_N w_N\|_{Z_0^s(\tau)}^4 
+ \| \Pi_N u_N\|_{Y_0^s(\tau)}^2 \| \Pi_N w_N\|_{Z_0^s(\tau)}^5 
 \\
 &+ \left.\| \Pi_N u_N\|_{Y_0^s(\tau)} \| \Pi_N w_N\|_{Z_0^s(\tau)}^6 
\right).
\end{aligned} \label{bound on w_N}
\end{align}
We now show \eqref{Final Z_0s bound for w_N}.
We  introduce the set
\begin{align}
    I_S\equiv &\bigg\{T: |T|^{\delta/2} \big( 
(4CC'S)^3 
+ (2C'S)^3 
+ (2C'S)^2 (4CC'S) 
+ (2C'S) (4CC'S)^2 \\
&+ (4CC'S)^4 (2C'S) 
+ (2C'S)^2 (4CC'S)^3 
+ (2C'S)^3 (4CC'S)^2 \\
&+ (2C'S)^4 (4CC'S) 
+ (4CC'S)^5 (2C'S)^2 
+ (4CC'S)^4 (2C'S)^3 \\
&+ (4CC'S)^3 (2C'S)^4 
+ (4CC'S)^2 (2C'S)^5 
+ (4CC'S) (2C'S)^6 
\big) < S \bigg\}, 
\end{align}
and note that $I_S\subseteq J_S$. We claim that
\begin{align}
    \|  w_N \|_{Z_0^s(I_S)} < 2C'S, \quad \forall u_0 \text{ such that } \| \Pi_N u_0\|_{\F L^{s,p}}\le S.
\end{align}
By contradiction (and here we use the continuity of the map $ t\mapsto \| w_N\|_{Z_0^s([-t,t])}$ since $w_N$ is smooth) assume that there exists $u_0\in \F L^{s,p}$ such that  $\| \Pi_N u_0\|_{\F L^{s,p}}\le S$ and $t_0\in I_S$ such that
\begin{align}\label{6}
     \| w_N\|_{Z_0^s([-t_0, t_0])}=2C'S.
\end{align}
Again, a standard bootstrap argument yields that by the above equation and \eqref{bound for u_N Proposition 3.1}.
\begin{align}
    \|  u_N\|_{Y_0^s(J_S\cap [-t_0,t_0])} \le 4CC'S,
\end{align}
where $J_S$ is defined in \eqref{definition for J_S}.
Next, we remark that
\begin{align}
    [-t_0,t_0]= I_S \cap [-t_0, t_0] \subseteq J_S\cap [-t_0, t_0]
\end{align}
and therefore, 
\begin{align}
     \|  u_N\|_{Y_0^s([-t_0, t_0])} \le 4CC'S.
\end{align}

Combining this estimate with the \eqref{bound on w_N} and the definition of $I_S$, we deduce
\begin{align}
    \| w_N\|_{Z_0^s([-t_0, t_0])} < C'S + C'S=2C'S,
\end{align}
contradicting \eqref{6}. 
Next, we  notice that for $c=c_0^{1/\delta}$ for small enough $c_0$  and $\beta=12/\delta$, we have the inclusion
\begin{align}
\{t:|t|\le c \langle S\rangle^{-\beta}\}\subseteq I_S
\end{align}
 Then, we have the following chain of implications
\begin{align}
u_0\in B_S^{s,p} &\implies \|w_N\|_{Z(c\langle S\rangle^{-\beta})}\le 2CS 
\implies \|u_N \|_{Y_0^s(J_S \cap \{|t|\le c\langle S\rangle^{-\beta}\})}\le 4CC'S \\
&\implies \|u_N \|_{Y_0^s(c\langle S\rangle^{-\beta})}\le 4CC'S,
\end{align}
where in the last step, we used the fact that for our specific choice of $c, \beta$ 
\begin{align}
     \{|t|\le c\langle S\rangle^{-\beta}\} \subseteq J_S.
\end{align}
Therefore, by the embedding $Y_0^s(T) \hookrightarrow C([-T,T],\F L^{s,p})$ ,
\begin{align} \label{Y_0s final bound on u_N}
    \sup_{\substack{u_0\in B^{s,p}_S, N\in \mathbb{N} \\ |t|\le c\langle S\rangle^{-\beta}}} \norm{  u_N(t)}_{\F L^{s,p}} \le 4CC'S.
\end{align} 
Unfortunately, this bound is not tight enough for our purpose. To obtain a better one, we take the $C([-T,T], \F L^{s,p})$ norm, with $T\le c \langle S\rangle^{-\beta}$, of the equation \eqref{equation for u_n with cutoffs} linking $u_N$ and $w_N$
\begin{align}
    \| u_N \|_{C([-T,T], \F L^{s,p})} &\le  \|w_N\|_{C([-T,T], \F L^{s,p})} + \|\varphi_T\left(\Pi_N F(\Pi_N u_N, \Pi_N w_N)\right) \|_{C([-T,T], \F L^{s,p})}.
\end{align}
We now use the embedding $ Y_0^s(T) \hookrightarrow C([-T,T], \F L^{s,p})$, the form of $F$ in \eqref{Section 2 forumula for F} and the bounds for estimating $F$ contained in Lemma \ref{Estimates for G} to conclude that 
\begin{align}
     \| u_N \|_{C([-T,T], \F L^{s,p})} &\le  \|w_N\|_{C([-T,T], \F L^{s,p})} + CT^{\delta/2}\left( \| \Pi_N u\|_{Y_0^s(T)}^2 \|\Pi_N w\|_{Z_0^s(T)} + \| \Pi_N u\|_{Y_0^s(T)} \| \Pi_N w\|_{Z_0^s(T)}^2\right).
\end{align}
Using similar arguments on the equation \eqref{equation for w_n with cutoffs} and the unitarity of the free evolution $S(t)=e^{t\partial_x^3}$ in Fourier-Lebesgue spaces, 
\begin{align}
    \|S(t)u_0\|_{\FL^{s,p}} = \|u_0\|_{\FL^{s,p}},
\end{align}
we show that
\begin{align}
    \|w_N\|_{C([-T,T], \F L^{s,p})} &\le \|u_0\|_{\F L^{s,p}} +C'T^{\delta/2} \bigg(\| \Pi_N u\|_{Y_0^s}^3+ \| \Pi_N w\|_{Z_0^s}^3 
+ \| \Pi_N w\|_{Z_0^s}^2 \| \Pi_N u\|_{Y_0^s} 
+ \| \Pi_N w\|_{Z_0^s} \| \Pi_N u\|_{Y_0^s}^2 \\
&+ \| \Pi_N u\|_{Y_0^s}^4 \| \Pi_N w\|_{Z_0^s} 
+ \| \Pi_N w\|_{Z_0^s}^2 \| \Pi_N u\|_{Y_0^s}^3 
+ \| \Pi_N w\|_{Z_0^s}^3 \| \Pi_N u\|_{Y_0^s}^2 
+ \| \Pi_N w\|_{Z_0^s}^4 \| \Pi_N u\|_{Y_0^s} \\
&+ \| \Pi_N u\|_{Y_0^s}^5 \| \Pi_N w\|_{Z_0^s}^2 
+ \| \Pi_N u\|_{Y_0^s}^4 \| \Pi_N w\|_{Z_0^s}^3 
+ \| \Pi_N u\|_{Y_0^s}^3 \| \Pi_N w\|_{Z_0^s}^4 
+ \| \Pi_N u\|_{Y_0^s}^2 \| \Pi_N w\|_{Z_0^s}^5 \\
&+ \| \Pi_N u\|_{Y_0^s} \| \Pi_N w\|_{Z_0^s}^6 \bigg)
\end{align}
and combining the two above bounds, we conclude that
\begin{align}
    \| u_N \|_{C([-T,T], \F L^{s,p})} &\le \|u_0\|_{\F L^{s,p}} + CT^{\delta/2}\left( \| \Pi_N u\|_{Y_0^s(T)}^2 \|\Pi_N w\|_{Z_0^s(T)} + \| \Pi_N u\|_{Y_0^s(T)} \Pi_N w\|_{Z_0^s(T)}^2\right) \\
    &+C'T^{\delta/2} \bigg(\| \Pi_N u\|_{Y_0^s}^3 
+ \| \Pi_N w\|_{Z_0^s}^3 
+ \| \Pi_N w\|_{Z_0^s}^2 \| \Pi_N u\|_{Y_0^s} 
+ \| \Pi_N w\|_{Z_0^s} \| \Pi_N u\|_{Y_0^s}^2 \\
&+\| \Pi_N u\|_{Y_0^s}^4 \| \Pi_N w\|_{Z_0^s} 
+ \| \Pi_N w\|_{Z_0^s}^2 \| \Pi_N u\|_{Y_0^s}^3 
+ \| \Pi_N w\|_{Z_0^s}^3 \| \Pi_N u\|_{Y_0^s}^2 
+ \| \Pi_N w\|_{Z_0^s}^4 \| \Pi_N u\|_{Y_0^s} \\
&+ \| \Pi_N u\|_{Y_0^s}^5 \| \Pi_N w\|_{Z_0^s}^2 
+ \| \Pi_N u\|_{Y_0^s}^4 \| \Pi_N w\|_{Z_0^s}^3 
+ \| \Pi_N u\|_{Y_0^s}^3 \| \Pi_N w\|_{Z_0^s}^4 
+ \| \Pi_N u\|_{Y_0^s}^2 \| \Pi_N w\|_{Z_0^s}^5 \\
&+ \| \Pi_N u\|_{Y_0^s} \| \Pi_N w\|_{Z_0^s}^6 \bigg).
\end{align}
Using the fact that $T\le c \langle S\rangle^{-\beta}$ and our above estimates, we can show that 
\begin{align}
    \| u_N \|_{C([-T,T], \F L^{s,p})} &\le \|u_0\|_{\F L^{s,p}} + CT^{\delta/2}\big((2C'S)^2 (4CC'S) 
+ (2C'S) (4CC'S)^2 
\big) + C' T^{\delta/2}\big( 
(4CC'S)^3 
+ (2C'S)^3 \\
&+ (2C'S)^2 (4CC'S) 
+ (2C'S) (4CC'S)^2 
+ (4CC'S)^4 (2C'S) 
+ (2C'S)^2 (4CC'S)^3 \\
&+ (2C'S)^3 (4CC'S)^2 
+ (2C'S)^4 (4CC'S) 
+ (4CC'S)^5 (2C'S)^2 
+ (4CC'S)^4 (2C'S)^3 \\
&+ (4CC'S)^3 (2C'S)^4 
+ (4CC'S)^2 (2C'S)^5 
+ (4CC'S) (2C'S)^6 
\big).
\end{align}
Now, if we take $T$ even smaller such that $T\le c_1 \langle S \rangle^{-\beta'}$ where $\beta'=18/\delta$ for small enough $c_1(\delta)>0$, we may conclude that 
\begin{align}
     \| u_N \|_{C([-T,T], \F L^{s,p})} &\le \|u_0\|_{\F L^{s,p}} + \frac{1}{S}.
\end{align}
Therefore, 
\begin{align}
    \sup_{\substack{u_0\in B^{s,p}_S, N\in \mathbb{N} \\ |t|\le c_1\langle S\rangle^{-\beta'}}} \norm{  u_N(t)}_{\F L^{s,p}} \le S+1/S.
\end{align}
By abuse of notation, we relabel $\beta=\beta'$ and $c=c_1$. This concludes the proof of Proposition \ref{S+1/S 2nd half}
\end{proof}
\subsection{Long-time convergence of $\Phi_{N}$}
We now present a proposition that is useful in the proof of Theorem \ref{Main Result} that is presented in Section \ref{Section: Proof of Main Theorem}, where we will apply it along a subsequence $\{N_k,k\}_{k\in \mathbb{N}}\subseteq \{N,k\}_{N\in \mathbb{N}, k\in \mathbb{N}}$, where $N_k\to\infty$.
\begin{prop} \label{Long-time convergence proposition}
Let $1/2\le s <s', 2\le p<\infty$ be given and $\{u_{k,0}\}$ be a sequence in $\mathcal{F}L^{s,p}$ such that
\[
u_{k,0} \xrightarrow[k \to \infty]{} u_0 \quad \text{in } \mathcal{F}L^{s,p}.
\]
Assume moreover that for some $T > 0$ we have
\begin{equation}\label{Section 2.1 uniform bound on truncated flow}
\sup_{k, N \in \mathbb{N}} \sup_{t \in [-T,T]} \|\Phi_{N}(t)u_{k,0}\|_{\mathcal{F}L^{s,p}} \le M< \infty,
\end{equation}
for some fixed $M>0$, where $\Phi_N(t)$ is the flow associated to \eqref{mKdV2 truncated}. Then there exists $u \in C([-T,T], \mathcal{F}L^{s,p})$ such that
\begin{equation}\label{2.48}
\sup_{t \in [-T,T]} \| \Phi_{N}(t)u_{k,0} -u(t)\|_{\mathcal{F}L^{s',p}}
\xrightarrow[k,N \to \infty]{} 0
\end{equation}
and $u$ is the unique strong solution to \eqref{mKdV 2Cauchy} in the sense of Definition \ref{Definition of strong solution}. Moreover
\begin{equation}\label{2.49}
\sup_{t \in [-T,T]} \|u(t)\|_{\mathcal{F}L^{s,p}} \leq 
\sup_{k,N \in \mathbb{N}} \sup_{t \in [-T,T]} \|\Phi_{N}(t)u_{k,0}\|_{\mathcal{F}L^{s,p}}.
\end{equation}
\end{prop}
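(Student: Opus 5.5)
The statement has $s<s'$, but the convergence \eqref{2.48} is only plausible in a \emph{weaker} norm, so I read it as $1/2\le s'<s$, as announced at the start of Section \ref{Section:FiniteDimApproximations}. The plan is to construct the limit $u$ directly, by the fixed-point argument of \cite{ChapoutoJDDE}, rather than as the limit of a Cauchy sequence. I then compare the truncated flow with it through the difference of the re-centered systems, on short time intervals of a length $\tau=\tau(M)$ that is uniform in $k,N$. I concatenate finitely many such intervals to cover $[-T,T]$.

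\textbf{Step 1 (local uniform bounds).} Set $\tau = c\langle 2M\rangle^{-\beta}$. By the bootstrap in the proof of Proposition \ref{S+1/S 2nd half} (which only needs the size of the datum), for every $t_0\in[-T,T]$ the pair $(w_N,u_N)$ started from $\Phi_N(t_0)u_{k,0}$ satisfies
\begin{align}
\|\Pi_N w_N\|_{Z_0^s([t_0-\tau,t_0+\tau])}+\|\Pi_N u_N\|_{Y_0^s([t_0-\tau,t_0+\tau])}\le C(M),
\end{align}
uniformly in $k,N$; this is where the hypothesis \eqref{Section 2.1 uniform bound on truncated flow} enters. The same contraction argument without $\Pi_N$, using Lemmas \ref{Estimates for G}, \ref{Lemma for F} and \ref{Estimates for C, Q, S}, gives a unique local solution $(w,u)$ of \eqref{u[w]}, \eqref{w in terms of C,Q,S} from any datum of $\FL^{s,p}$-size at most $2M$, with the same bounds.

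\textbf{Step 2 (difference estimate, the main obstacle).} On one interval, I subtract the truncated system \eqref{u_N[w]}, \eqref{w_N[u]} from the full one. Each multilinear term $\mathcal{M}$ (namely $F$, $C$, $Q$, $\Sigma$) is split as
\begin{align}
\mathcal{M}(u,w)-\Pi_N\mathcal{M}(\Pi_Nu_N,\Pi_Nw_N)=\big[\mathcal{M}(u,w)-\mathcal{M}(\Pi_N u,\Pi_N w)\big]+\Pi_{>N}\mathcal{M}(\Pi_N u,\Pi_N w)+\Pi_N\big[\mathcal{M}(\Pi_Nu,\Pi_Nw)-\mathcal{M}(\Pi_Nu_N,\Pi_Nw_N)\big].
\end{align}
\begin{itemize}
\item The first two brackets always contain either an input $\Pi_{>N}u$, $\Pi_{>N}w$, or an output at frequency $>N$. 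Working in the $s'$-based spaces $Y_0^{s'}, Z_0^{s'}$, I pay $N^{s'-s}$ on the high-frequency input or output and keep the $s$-based bounds from Step 1 on the others. Since $s'\ge 1/2$, the estimates of Lemmas \ref{Estimates for G} and \ref{Estimates for C, Q, S} remain available at level $s'$, so these terms are $\lesssim C(M)N^{s'-s}\to0$.
\item The last bracket is multilinear with at least one difference input, so it is bounded by $\tau^{\delta/2}C(M)\big(\|u-u_N\|_{Y_0^{s'}}+\|w-w_N\|_{Z_0^{s'}}\big)$.
\end{itemize}
The delicate point is the book-keeping over all the cubic, quintic and septic terms in $C$, $Q$, $\Sigma$. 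For each one I must check that the output-frequency cutoff and the frequency restrictions defining $\mathbb{X}_A,\mathbb{X}_B$ commute with $\Pi_N$ well enough that the high-frequency piece really carries a factor $\langle n_j\rangle^{s'-s}\le N^{s'-s}$. The first thing I would try is to prove one generic multilinear estimate, valid for every term in the list, that places the $s'$-weight on a chosen input.

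\textbf{Step 3 (closing on one interval).} After shrinking $\tau$ (still depending only on $M$), the difference terms are absorbed, and with $X=\sup_{|t-t_0|\le\tau}\|u(t)-u_N(t)\|_{\FL^{s',p}}$ we get
\begin{align}
X\lesssim \|u(t_0)-\Phi_N(t_0)u_{k,0}\|_{\FL^{s',p}}+C(M)N^{s'-s},
\end{align}
using the embeddings $Y_0^{s'},Z_0^{s'}\hookrightarrow C(\FL^{s',p})$.

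\textbf{Step 4 (iteration, a priori bound, conclusion).} I iterate from $t_0=0$, where the initial error is $\|u_0-u_{k,0}\|_{\FL^{s,p}}\to 0$, over the $O(T/\tau)$ intervals; the errors accumulate geometrically but finitely, which gives \eqref{2.48}. To continue the full solution past each interval I need $\|u(t)\|_{\FL^{s,p}}\le M$ at each restart time. This follows from \eqref{2.49}, applied interval by interval: $\Phi_N(t)u_{k,0}\to u(t)$ in $\FL^{s',p}$, hence each Fourier coefficient converges, and Fatou's lemma in $\ell^p_n$ gives $\|u(t)\|_{\FL^{s,p}}\le\liminf\|\Phi_N(t)u_{k,0}\|_{\FL^{s,p}}\le M$. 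The same weak-compactness argument (with Fatou in the weighted $\ell^p_nL^q_\tau$ norms) gives $u\in Y_0^s$ locally, and time-continuity in $\FL^{s,p}$ then follows from the embedding $Y_0^s\hookrightarrow C(\FL^{s,p})$.

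Finally, $u$ solves the full re-centered system, and undoing the re-centering shows it satisfies \eqref{mKdV2 Duhamel, Preliminaries}. Uniqueness of strong solutions is inherited from the uniqueness part of the contraction in \cite{ChapoutoJDDE}.
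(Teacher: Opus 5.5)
Your proposal is correct, and your reading $1/2\le s'<s$ is the one the paper's proof actually uses. The skeleton matches the paper's. You build the full flow directly by the contraction of \cite{ChapoutoJDDE} and compare it with the truncated re-centered system on intervals whose length depends only on $M$. You split the error into terms with a difference input and terms gaining $N^{-(s-s')}$ from the uniform level-$s$ bounds, absorb, and iterate finitely many times. You then get \eqref{2.49} by lower semicontinuity, which is what your Fatou argument in $\ell^p_n$ is.

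The differences are minor but worth noting. The paper places the high-frequency pieces on the truncated solutions: $D_1=\mathcal{M}(u,w)-\mathcal{M}(u_{N,k},w_{N,k})$, and $D_{2,2}=\Pi_{>N}\mathcal{M}(u_{N,k},w_{N,k})$ is handled by noting that an output frequency above $N$ forces some input above $N/4$. You place them on the full solution instead, and handle the output piece directly via $\|\Pi_{>N}f\|_{Y_0^{s'}}\le N^{s'-s}\|f\|_{Y_0^s}$ after the level-$s$ estimate. Either way, the ``delicate book-keeping'' you anticipate does not arise. The input pieces follow from multilinearity and the level-$s'$ estimates alone. You never need to commute $\Pi_N$ past the restrictions defining $\mathbb{X}_A,\mathbb{X}_B$, and no new weighted estimate is required.

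For $u\in C([-T,T],\FL^{s,p})$, the paper adapts the energy-method argument of \cite{KNPSV} in Claims 1--3: weak continuity, then norm continuity via a $\limsup$ bound coming from the truncated flows. That argument never uses auxiliary-space regularity of the limit itself. You instead observe that on each subinterval the limit is the contraction solution, hence lies in $Y_0^s$, and $Y_0^s\hookrightarrow C(\FL^{s,p})$. This is shorter. It also makes your extra Fatou/weak-compactness step in $\ell^p_nL^q_\tau$ unnecessary, since Step 1 already gives $u\in Y_0^s$ locally.
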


The rest of this subsection is devoted to the proof of the above proposition. To prepare for the proof, we first remark on how to construct a local solution to the full flow $\Phi(t)u_0$ and show how to obtain bounds for the truncated flow $(\Phi_N(t) u_{k,0}, \Psi_N(t) u_{k,0})$, where $\Psi_N(t)$ is the flow associated to \eqref{w_N[u]}, on small intervals of length $2c\langle M \rangle^{-\beta}$, and the we iterate   to cover $[-T,T]$.

In the proof of Proposition \ref{S+1/S 2nd half} in the previous subsection, we demonstrated how to construct a solution to the flow $\Phi_{N}(t)u_{0,k}$ on the interval $\{|t|\le \tau\}$, where $\tau=c \langle M \rangle^{-\beta}$, for initial data $u_{k,0}\in B^{s,p}_M$ with bounds uniform in $N,k$ by solving the following system of equations 
\begin{align} \label{Truncated flow in Section 2.1 eq for u}
    u_{N}&= w_{N} + \varphi_\tau \Pi_{N} F(\Pi_{N} u_{N}, \Pi_{N} w_{N}) \\ \label{Truncated flow in Section 2.1 eq for w}
    w_{N}&= S(t)u_{0,k} + \varphi_\tau \left(\Pi_N C(\Pi_N u_N, \Pi_N w) 
+  \Pi_N Q(\Pi_N u_N , \Pi_N w_N) 
+ \Pi_N \Sigma(\Pi_N u_N , \Pi_N w_N )\right).
    \end{align}
    
By repeating this argument for the full flow $\Phi(t)u_0$, we can also construct a local solution on the same time interval, satisfying \eqref{u[w]}, \eqref{w[u]} by recasting the equations \eqref{u[w]}, \eqref{w in terms of C,Q,S} as follows:
\begin{align}
    u&= w + \varphi_\tau F(u, w) \label{u[w] with phi} \\ 
    w&= S(t)u_0 + \varphi_\tau\left( C(u, w) 
+  Q(u, w) 
+ \Sigma(u, w) \right) \label{w[u] with phi}
\end{align}

By hypothesis, we have the bound
\begin{align} \label{Long-time FL bound }
     \sup_{t\in[-T,T]} \| \Phi_N(t) u_{k,0}\|_{\F L^{s,p}}  \le M
\end{align}
uniformly in $N,k$. From this bound at $t=0$ and our previous analysis leading to \eqref{Final Z_0s bound for w_N}, \eqref{Y_0s final bound on u_N}, we may deduce 
\begin{align}
    \| \Phi_N(t) u_{k,0}\|_{Y^s_0(c \langle M \rangle^{-\beta})} +  \| \Psi_N(t) u_{k,0}\|_{Z^s_0(c \langle M \rangle^{-\beta})} \le (4CC'+2C')M \equiv C''M.
\end{align}
Iterating this process at shifted times  $\pm c\langle M \rangle^{-\beta}h$ for $ h \in \Z \cap \{ l:|l|\le \lceil Tc^{-1}\langle M \rangle^{\beta} \rceil\}$ and by using the hypothesis \eqref{Section 2.1 uniform bound on truncated flow}, we can repeat this argument $\lceil \frac{T}{c\langle M \rangle^{-\beta}} \rceil$ times to cover the entire interval $[-T,T]$ and we will have shown that 
\begin{align}
 \sup_{h\in \Z \cap \{ l:|l|\le \lceil Tc^{-1}\langle M \rangle^{\beta} \rceil\}} &\| \Phi_N(t) u_{k,0}\|_{Y^s_0([(h-1)c \langle M \rangle^{-\beta}, (h+1)c \langle M \rangle^{-\beta}])}  \\
 &+  \| \Psi_N(t) u_{k,0}\|_{Z^s_0([(h-1)c \langle M \rangle^{-\beta}, (h+1)c \langle M \rangle^{-\beta}])} \le  C''M. \label{bounds for truncated flow 2.1}
\end{align}
\textit{Proof of Proposition \ref{Long-time convergence proposition}}
We will prove Proposition \ref{Long-time convergence proposition} via the following sequence of arguments.
\begin{enumerate}

\item In the main and first step we show that taking $k,N\to\infty$ implies that $\Psi_N(t) u_{k,0}\to \Psi(t) u_0$ in $Z_0^{s'}([-c \langle M \rangle^{-\beta}, c \langle M \rangle^{-\beta}])$ and  $\Phi_N(t) u_{k,0}\to \Phi(t) u_0$ in $Y_0^{s'}[-c \langle M \rangle^{-\beta}, c \langle M \rangle^{-\beta}]$ for $c=c(\delta)>0$ small enough, $1/2\le s'<s$, where $\Phi(t) u_0,\Psi(t)u_0$ are the unique solutions to \eqref{u[w] with phi}, \eqref{w[u] with phi} (and therefore \eqref{u[w]}, \eqref{w in terms of C,Q,S}) \label{item 1 2.1}
 \item Then, by the embedding $Y_0^{s'}(T_0)\hookrightarrow C([-c \langle M \rangle^{-\beta},c \langle M \rangle^{-\beta}],\F L^{s',p})$, 
\begin{align}
    \lim_{N,k \to\infty }\sup_{t\in (-c \langle M \rangle^{-\beta}, c \langle M \rangle^{-\beta})} \| \Phi_{N}(t) u_{k,0}- \Phi(t) u_0\|_{\F L^{s',p}} =0
\end{align}
and therefore by \eqref{Section 2.1 uniform bound on truncated flow}, 
\begin{align}
    \|\Phi(\pm c \langle M \rangle^{-\beta})u_0\|_{\FL^{s',p}} =\lim_{k,N\to\infty} \|\Phi_{N}(\pm c \langle M \rangle^{-\beta})u_{k,0}\|_{\FL^{s',p}} \le M.
\end{align}
Now, by a contraction argument, on the intervals $[0, 2c \langle M \rangle^{-\beta}],[- 2c \langle M \rangle^{-\beta},0]$, we can construct a solution $(w(t), u(t))=(\Psi(t) u_0,\Phi(t)u_0)$ to \eqref{u[w]}, \eqref{w[u]}, with bounds as in \eqref{bounds for truncated flow 2.1} with $h=1$ using a recasting similar to \eqref{u[w] with phi}, \eqref{w[u] with phi} \label{item 2 2.1}
\item By the translation invariance of the truncated and full flows and noting the initial data convergence $\Phi_N(\pm c \langle M \rangle^{-\beta})u_{0,k}\to \Phi(\pm c \langle M \rangle^{-\beta})u_{0}$ due to item (\ref{item 2 2.1}), we deduce thanks to item (\ref{item 1 2.1}) that
\begin{align}
    \Psi_{N}(t)u_{k,0}\to \Psi(t)u_0 \text{ in } Z_0^{s'}[0, 2c \langle M \rangle^{-\beta}], \quad \Phi_{N}(t)u_{k,0}\to \Phi(t)u_0 \text{ in } Y_0^{s'}[0, 2c \langle M \rangle^{-\beta}].
\end{align}

and similarly for the interval $[- 2c \langle M \rangle^{-\beta},0]$.
\item By iterating this argument until the entire interval $[-T,T]$ is covered, we will have constructed a unique strong solution $(u,w)=(\Phi(t)u_0, \Psi(t) u_0)$ to \eqref{u[w]}, \eqref{w[u]} in $C([-T,T],\F L^{s',p})$ such that
\begin{align}
    \lim_{N,k\to\infty} \sup_{t\in [-T,T]} \| \Phi_{N}(t) u_{k,0}- \Phi(t) u_0\|_{\F L^{s',p}} = 0.
\end{align}
\item By the weak lower-semi-continuity of $\F L^{s,p}$ combined with the fact that $\Phi_{N}(t)u_{k,0} \rightharpoonup u(t)$ weakly in $\F L^{s,p}$ as $N,k\to\infty$, we conclude that
\begin{align} \label{bound in item (5) Section 3}
    \sup_{t\in[-T,T]} \|\Phi(t)u_0\|_{\FL^{s,p}} \le \sup_{t\in[-T,T]} \liminf_{k,N\to\infty} \|\Phi_{N}(t)u_{k,0}\|_{\FL^{s,p}} \le M,
\end{align}
which in turn implies \eqref{2.49}.
\item Finally, we show that $u\in C([-T,T], \FL^{s,p})$. \label{item 6}
\end{enumerate}

We will start by proving item (1). Then items (2), (3), (4) and (5) follow as explained above. The subsection will conclude with a proof of item (6).

In order to prove item (\ref{item 1 2.1}), we will first state the following two claims.
\textbf{Claim} A: 
    For $|\tau| \le c \langle M \rangle^{-\beta}$ for $c,\beta$ as in Proposition \ref{S+1/S 2nd half}, we have that
\begin{align} \label{Claim 1}
    \| u-\Phi_N(t)u_{k,0}\|_{Y^{s'}_0(\tau)} &\le \| w-\Psi_N(t)u_{k,0}\|_{Z^{s'}_0(\tau)} + 4C(M)\tau^{\delta/2} \| w-\Psi_N(t)u_{k,0}\|_{Z^{s'}_0(\tau)}\\ &+ 4C(M)\tau^{\delta/2} \| u-\Phi_N(t)u_{k,0}\|_{Y^{s'}_0(\tau)} 
    + 4\tau^{\delta/2} C(M) N^{-(s-s')}
\end{align}
with $C(M)$ a polynomial of degree at most 2 in $M$.

\textbf{Claim B} 
    For $|\tau| \le c \langle M \rangle^{-\beta}$ for $c,\beta$ as in Proposition \ref{S+1/S 2nd half}, we have that
\begin{align} \label{Claim 2}
    \|w-\Psi_N(t)u_{k,0}\|_{Z_0^{s'}(\tau)} &\le C \|u_0-u_{0,k}\|_{\F L^{s',p}} + C(M)\tau^{\delta/2} \| w-\Psi_N(t)u_{k,0}\|_{Z^{s'}_0(\tau)} \\& + C(M)\tau^{\delta/2} \| u-\Phi_N(t)u_{k,0}\|_{Y^{s'}_0(\tau)} 
    + \tau^{\delta/2} C(M) N^{-(s-s')}
\end{align}
with $C(M)$ a polynomial of degree at most 6 in $M$. 

Let us show how item (\ref{item 1 2.1}) follows from the two preceding claims.
We first take 
\begin{align} \label{eq for tau}
    \tau\le  c_0(\delta) \langle M \rangle^{-\beta}, \quad \beta=12/\delta
\end{align}
where $c_0$ is small enough and independent of $M$ such that 
\begin{align} \label{eq for c_0}
    c_0^{\delta/2} 4 \sup_{M\ge 0}C(M)\langle M\rangle^{-6}\le 1/2.
\end{align} We then deduce that, due to \textbf{Claim} A, 
\begin{align}
    \| u-\Phi_N(t)u_{k,0}\|_{Y^{s'}_0(\tau)} &\le \| w-\Psi_N(t)u_{k,0}\|_{Z^{s'}_0(\tau)} + 4C(M)\tau^{\delta/2} \| w-\Psi_N(t)u_{k,0}\|_{Z^{s'}_0(\tau)} \\
    &+ \frac{1}{2} \| u-\Phi_N(t)u_{k,0}\|_{Y^{s'}_0(\tau)} + 4\tau^{\delta/2} C(M) N^{-(s-s')}
\end{align}
Absorbing the term $\frac{1}{2} \| u-\Phi_N(t)u_{k,0}\|_{Y^{s'}_0(\tau)}$ into the left hand side yields
\begin{align} \label{Estimate for u-u_N_k 2.1}
      \frac{1}{2}\| u-\Phi_N(t)u_{k,0}\|_{Y^{s'}_0(\tau)}&\le \| w-\Psi_N(t)u_{k,0}\|_{Z^{s'}_0(\tau)} + 4C(M)\tau^{\delta/2} \| w-\Psi_N(t)u_{k,0}\|_{Z^{s'}_0(\tau)} +  4\tau^{\delta/2} C(M) N^{-(s-s')} \\
      &=(1+4C(M)\tau^{\delta/2})\| w-\Psi_N(t)u_{k,0}\|_{Z^{s'}_0(\tau)} +  4\tau^{\delta/2} C(M) N^{-(s-s')}
\end{align}

We now  insert the above estimate for $u-\Phi_N(t)u_{k,0}$ into the right hand side of the estimate in \textbf{Claim B}:
\begin{align}
    \|w-\Psi_N(t)u_{k,0}\|_{Z_0^{s'}(\tau)} &\le 
 C \|u_0-u_{0,k}\|_{\F L^{s',p}} + C(M)\tau^{\delta/2}(3+8C(M)\tau^{\delta/2})\| w-\Psi_N(t)u_{k,0}\|_{Z^{s'}_0(\tau)} \\
    &+ 8(C(M)\tau^{\delta/2})^2 N^{-(s-s')}
    + \tau^{\delta/2} C(M) N^{-(s-s')},
\end{align}
for some $C(M)>0$ a polynomial of degree at most 6. Thanks to \eqref{eq for tau} and \eqref{eq for c_0}, we can absorb $w-\Psi_N(t)u_{k,0}$ on the right hand side to the left hand side, after which we conclude that
\begin{align}
    \frac{1}{2}\|w-\Psi_N(t)u_{k,0}\|_{Z_0^{s'}(\tau)} &\le C \|u_0-u_{0,k}\|_{\F L^{s',p}} + \tau^{\delta/2} C(M) N^{-(s-s')}.
\end{align}
Letting $k,N\to\infty$, we conclude that $ \Psi_N(t)u_{k,0}\to w$ in $Z_0^{s'}(\tau)$. Letting $k,N\to\infty$ in \eqref{Estimate for u-u_N_k 2.1}, we conclude that $\Phi_N(t)u_{k,0} \to u$ in $Y_0^{s'}(\tau)$ if we take $\tau= c(\delta) \langle M \rangle^{-\beta} $ for small enough $c(\delta)>0$. This finishes the proof of item \eqref{item 1 2.1}. 

We now present the proofs of the two claims. We will denote $w_{N,k}= \Psi_N(t)u_{k,0}, \quad u_{N,k}=\Phi_N(t)u_{k,0}$.

\begin{proof}[Proof of \textbf{Claim} A]

We note that thanks to \eqref{u[w] with phi} and \eqref{Truncated flow in Section 2.1 eq for u},
\begin{align} \label{Equation for u-u_N_k}
    u-u_{N,k} = w-w_{N,k} + \varphi_\tau \left[F(u, w) - \Pi_{N}F(\Pi_{N} u_{N,k}, \Pi_{N} w_{N,k})\right].
\end{align}
Now, by recalling \eqref{Section 2 forumula for F}, $F(u,w)$ consists of a sum of four trilinear operators. One of them is $ \,G_{A, \ge}(w, \overline{u},u)$, where $G_{A, \ge}$ is defined in \eqref{Definition of G_(*,>=)}. We will first demonstrate how to bound
\begin{align}
   \varphi_\tau \left(G_{A, \ge}(w, \overline{u},u) -\Pi_{N}G_{A, \ge}(\Pi_{N}w_{N,k}, \overline{\Pi_{N}u_{N,k}},\Pi_{N}u_{N,k}\right).
\end{align}
Note that we may write 
\begin{align}
    &G_{A, \ge}(w, \overline{u},u) -\Pi_{N}G_{A, \ge}(\Pi_{N}w_{N,k}, \overline{\Pi_{N}u_{N,k}},\Pi_{N}u_{N,k}) \\
    &=\left[G_{A, \ge}(w, \overline{u},u) - G_{A, \ge}(w_{N,k}, \overline{u_{N,k}},u_{N,k})\right] -\left(\Pi_{N}G_{A, \ge}(\Pi_{N}w_{N,k}, \overline{\Pi_{N}u_{N,k}},\Pi_{N}u_{N,k}) -  G_{A, \ge}(w_{N,k}, \overline{u_{N,k}},u_{N,k}) \right) \\
    &\equiv  \left[D_1\right]- (D_2).
\end{align}
We will show that 
\begin{align} \label{E_1-E_2}
    \| \varphi_\tau D_1\|_{Y_0^s(\tau)}  &\le C(M)\tau^{\delta/2} \| w-w_{N,k}\|_{Z^{s'}_0(\tau)} + C(M)\tau^{\delta/2} \| u-u_{N,k}\|_{Y^{s'}_0(\tau)}
\end{align}
and that 
\begin{align}
    \|\varphi_\tau D_2\|_{Y_0^s(\tau)} \le \tau^{\delta/2} C(M) N^{-(s-s')},
\end{align}
for $C(M)$ a quadratic polynomial in $M$ and $s>s'\ge 1/2$.
In turn, by the triangle inequality, this will give us a bound on 
\begin{align}
    \left\| G_{A, \ge}(w, \overline{u},u) -\Pi_{N}G_{A, \ge}(\Pi_{N}w_{N,k}, \overline{\Pi_{N}u_{N,k}},\Pi_{N}u_{N,k})\right\|_{Y_0^s}. 
\end{align}

This reduction is useful because it isolates on one hand the convergence of $(u_{N,k}, w_{N,k})\to (u,w)$, which is the main mechanism behind the bounds for $E_1$ and on the other hand, the convergence of the projections $\Pi_{N} \to \operatorname{Id}$ as $N\to\infty$ when applied to $u_{N,k}, w_{N,k}$ in the $\FL^{s',p}$ norm, where $\operatorname{Id}$ is the identity, which is the main mechanism driving the estimate for $E_2$.

Now, we will expand the term $G_{A, \ge}(w, \overline{u},u)$ into eight sub-term, with the first seven containing at least one difference $w-w_{N,k}$ or $u-u_{N,k}$ in one of its arguments:
\begin{align}
    G_{A, \ge}(w, \overline{u},u)  
    &= G_{A, \ge}(w-w_{N,k} + w_{N,k}, \overline{u-u_{N,k}} + \overline{u_{N,k}},u-u_{N,k} +u_{N,k} )  \\
    &=G_{A, \ge}(w-w_{N,k}, \overline{u-u_{N,k}} ,u-u_{N,k} ) \\
    &+G_{A, \ge}(w-w_{N,k}, \overline{u-u_{N,k}} ,u_{N,k} ) \\
    &+ G_{A, \ge}(w-w_{N,k}, \overline{u_{N,k}},u-u_{N,k}  ) \\
    &+G_{A, \ge}(w-w_{N,k}, \overline{u_{N,k}},u_{N,k} ) \\
    &+ G_{A, \ge}(w_{N,k}, \overline{u-u_{N,k}} ,u-u_{N,k} )\\
    &+G_{A, \ge}(w_{N,k}, \overline{u-u_{N,k}} ,u_{N,k} ) \\
    &+ G_{A, \ge}(w_{N,k}, \overline{u_{N,k}},u-u_{N,k} )\\
    &+G_{A, \ge}(w_{N,k}, \overline{u_{N,k}}, u_{N,k} ).
\end{align}
Therefore,
\begin{align}\label{Form of D_1 eq 1}
       D_1&=G_{A, \ge}(w, \overline{u},u)- G_{A, \ge}(w_{N,k}, \overline{u_{N,k}},u_{N,k}) \\
       &=G_{A, \ge}(w-w_{N,k}, \overline{u-u_{N,k}} ,u-u_{N,k} ) 
    +G_{A, \ge}(w-w_{N,k}, \overline{u-u_{N,k}} ,u_{N,k} ) \\
    &+ G_{A, \ge}(w-w_{N,k}, \overline{u_{N,k}},u-u_{N,k}  ) 
    +G_{A, \ge}(w-w_{N,k}, \overline{u_{N,k}},u_{N,k} ) \\
    &+ G_{A, \ge}(w_{N,k}, \overline{u-u_{N,k}} ,u-u_{N,k} )
    +G_{A, \ge}(w_{N,k}, \overline{u-u_{N,k}} ,u_{N,k} ) \\
    &+ G_{A, \ge}(w_{N,k}, \overline{u_{N,k}},u-u_{N,k} ).
\end{align}

Taking the $Y_0^{s'}$ norm and utilizing Lemma \ref{Lemma for F}, we find that
\begin{align}
    \|  \varphi_\tau D_1\|_{Y^{s'}_0(\tau)} &\lesssim \tau^{\delta/2} \| D_1\|_{Y^{s'}_1(\tau)}\\
    &\lesssim \tau^{\delta/2}\sum_{\substack{x\in  \{w_{N,k}, w-w_{N,k}\} \\ (y, z)\in \{ u_{N,k}, u- u_{N,k}\}^2 \\ (x,y,z)\neq (w_{N,k}, u_{N,k}, u_{N,k}) }} \| x\|_{Z_0^{s'}} \| y\|_{Y_0^{s'}}\| z\|_{Y_0^{s'}(\tau)} \label{Form of D_1 eq 2}.
\end{align}
Note that in \eqref{Form of D_1 eq 2}, there is always at least one factor of either $u-u_{N,k}$ or $w-w_{N,k}$ in each of the term. We will utilize this remark in showing that $w-w_{N,k}\to 0$ in $Z_0^s$ if $u_0-u_{0,k}\to 0$ in $Y_0^s$. 
\par 

We deduce that
\begin{align}\| \varphi_\tau D_1\|_{Y^{s'}_0(\tau)} 
&\lesssim \tau^{\delta/2} \bigg(\| w-w_{N,k}\|_{Z^{s'}_0(\tau)} \, \|u-u_{N,k}\|_{Y^{s'}_0(\tau)}^{2} \\
&+ \| w-w_{N,k}\|_{Z^{s'}_0(\tau)} \, \|u-u_{N,k}\|_{Y^{s'}_0(\tau)} \, \|u_{N,k}\|_{Y^{s'}_0(\tau)}  
+ \| w-w_{N,k}\|_{Z^{s'}_0(\tau)} \, \|u_{N,k}\|_{Y^{s'}_0(\tau)}^{2} \\
& + \|w_{N,k}\|_{Z^{s'}_0(\tau)} \, \|u-u_{N,k}\|_{Y^{s'}_0(\tau)}^{2} 
 + \|w_{N,k}\|_{Z^{s'}_0(\tau)} \, \|u_{N,k}\|_{Y^{s'}_0(\tau)} \, \|u-u_{N,k}\|_{Y^{s'}_0(\tau)}\bigg) .
\end{align}
Now, if we take $\tau$ such that $\tau\le c \langle M \rangle^{-\beta}$ for small enough $c>0, \beta=12/\delta$, then by the estimates shown during the proof of Proposition \ref{S+1/S 2nd half}, we have that
\begin{align}
     \|w_{N,k}\|_{Z^{s}_0(\tau)} \le 2C' M \\
     \|u_{N,k}\|_{Y^{s}_0(\tau)} \le 4C C'M \\
     \|u \|_{Y^{s}_0(\tau)} \le 4CC' M.
\end{align}
Therefore, by the embeddings $Y^{s}_0 \hookrightarrow Y^{s'}_0, Z^{s}_0 \hookrightarrow Z^{s'}_0$, it follows that, by the triangle inequality, 
\begin{align} \label{E_1}
    \| \varphi_\tau D_1\|_{Y^{s'}_0(\tau)} &\le C(M)\tau^{\delta/2} \| w-w_{N,k}\|_{Z^{s'}_0(\tau)} + C(M)\tau^{\delta/2} \| u-u_{N,k}\|_{Y^{s'}_0(\tau)} 
\end{align}
for some $C(M)>0$ a quadratic polynomial in $M$.
The same bounds hold true for $ G_{A, >}(w, \overline{u},u)- G_{A, >}(w_{N,k}, \overline{u_{N,k}},u_{N,k})$.

Let us now turn to estimating
\begin{align}
    D_2=\Pi_{N}G_{A, \ge}(\Pi_{N}w_{N,k}, \overline{\Pi_{N}u_{N,k}},\Pi_{N}u_{N,k}) -  G_{A, \ge}(w_{N,k},\overline{u_{N,k}},u_{N,k}).
\end{align}
We may write
\begin{align}
    &D_2=\Pi_{N}G_{A, \ge}(\Pi_{N}w_{N,k}, \overline{\Pi_{N}u_{N,k}},\Pi_{N}u_{N,k}) -  G_{A, \ge}(w_{N,k},\overline{u_{N,k}},u_{N,k}) \\
    &= \Pi_{N}\left(G_{A, \ge}(\Pi_{N}w_{N,k}, \overline{\Pi_{N}u_{N,k}},\Pi_{N}u_{N,k}) -  G_{A, \ge}(w_{N,k},\overline{u_{N,k}},u_{N,k})\right) - \Pi_{>N}G_{A, \ge}(w_{N,k},\overline{u_{N,k}},u_{N,k}) \\
    &\equiv D_{2,1}- D_{2,2}
\end{align}
We can write $E_{2,2}$ as
\begin{align}
    D_{2,2} &=\Pi_{>N}G_{A, \ge}(w_{N,k},\overline{u_{N,k}},u_{N,k}) \\
    &= \Pi_{>N}G_{A, \ge}(\Pi_{N/4} w_{N,k} + \Pi_{>N/4} w_{N,k}, \overline{\Pi_{N/4} u_{N,k}} + \overline{\Pi_{>N/4} u_{N,k}},
 \Pi_{N/4} u_{N,k} +\Pi_{>N/4} u_{N,k}).
\end{align}
One can see that after expanding out fully the above expression, all terms of $\Pi_{>N}G_{A, \ge}$ which have at least one argument at high frequency survive. Furthermore, the only term which does not share this property vanishes. Indeed, 
\begin{align}
    \Pi_{>N}G_{A, \ge}(\Pi_{N/4} w_{N,k},\overline{\Pi_{N/4}u_{N,k}},\Pi_{N/4}u_{N,k})=0
\end{align}
by utilizing disjoint supports. Therefore, by a similar argument we employed above, using the norms bounds for $G_{A, \ge}$ emanating from Lemma \ref{Estimates for G}, we deduce that
\begin{align}\label{9}
 \| \varphi_\tau D_{2,2}\|_{Y_0^{s'}(\tau)}
&\lesssim \tau^{\delta/2} \|  D_{2,2}\|_{Y_1^{s'}(\tau)} \\
&\lesssim \tau^{\delta/2} \bigg(\|\Pi_{N/4} w_{N,k}\|_{Z_0^{s'}(\tau)} \,\|\Pi_{N/4} u_{N,k}\|_{Y_0^{s'}(\tau)} \,\|\Pi_{>N/4} u_{N,k}\|_{Y_0^{s'}(\tau)} \\
&+ \|\Pi_{N/4} w_{N,k}\|_{Z_0^{s'}(\tau)} \,\|\Pi_{>N/4} u_{N,k}\|_{Y_0^{s'}(\tau)} \,\|\Pi_{N/4} u_{N,k}\|_{Y_0^{s'}(\tau)} \\
&+ \|\Pi_{N/4} w_{N,k}\|_{Z_0^{s'}(\tau)} \,\|\Pi_{>N/4} u_{N,k}\|_{Y_0^{s'}(\tau)}^{2} \\
&+ \|\Pi_{>N/4} w_{N,k}\|_{Z_0^{s'}(\tau)} \,\|\Pi_{N/4} u_{N,k}\|_{Y_0^{s'}(\tau)}^{2} \\
&+ \|\Pi_{>N/4} w_{N,k}\|_{Z_0^{s'}(\tau)} \,\|\Pi_{N/4} u_{N,k}\|_{Y_0^{s'}(\tau)} \,\|\Pi_{>N/4} u_{N,k}\|_{Y_0^{s'}(\tau)} \\
&+ \|\Pi_{>N/4} w_{N,k}\|_{Z_0^{s'}(\tau)} \,\|\Pi_{>N/4} u_{N,k}\|_{Y_0^{s'}(\tau)}^{2}\bigg).
\end{align}
Now, we note that, by the definition of $Y_0^s, Z_0^s$, 
\begin{align}
     \|\Pi_{>N/4} w_{N,k}\|_{Z_0^{s'}(\tau)} &\lesssim  N^{-(s-s')} \| w_{N,k}\|_{Z_0^{s}(\tau)} \label{gain small, negative power of N w eq 1} \\
     \|\Pi_{>N/4} u_{N,k}\|_{Y_0^{s'}(\tau)} &\lesssim  N^{-(s-s')} \| u_{N,k}\|_{Y_0^{s}}(\tau) \label{gain small, negative power of N u eq 1}.
\end{align}
If  we furthermore take $\tau\le c \langle M \rangle^{-\beta}$ , where $c, \beta>0$ are as in the Proof of Proposition \ref{S+1/S 2nd half}, we may deduce that
\begin{align}
    \| w_{N,k}\|_{Z_0^{s}(\tau)} &\le \| w_{N,k}\|_{Z_0^{s}( c \langle M \rangle^{-\beta})} \le 2C'M \\
    \| u_{N,k}\|_{Y_0^{s}(\tau)} &\le \| u_{N,k}\|_{Y_0^{s}( c \langle M \rangle^{-\beta})} \le 4C'CM,
\end{align}
where $C, C'>0$ are as in the proof of Proposition \ref{S+1/S 2nd half}. Therefore, we deduce that, combining the above inequalities, 
\begin{align}
    \|\Pi_{>N/4} w_{N,k}\|_{Z_0^{s'}(\tau)} &\le N^{-(s-s')} C(M) \label{gain small, negative power of N w eq 2} \\
     \|\Pi_{>N/4} u_{N,k}\|_{Y_0^{s'}(\tau)} &\le N^{-(s-s')} C(M), \label{gain small, negative power of N u eq 2}
\end{align}
which, combined with \eqref{9}, leads to  
\begin{align} \label{E_(2,2)}
     \| \varphi_\tau D_{2,2} \|_{Y_0^{s'}(\tau)} \le  \tau^{\delta/2} C(M) N^{-(s-s')}.
\end{align}
We now turn to estimating $E_{2,1}$: by Lemma \ref{Lemma for F} and the form of $E_{2,1}$, 
\begin{align}
&\| \varphi_\tau D_{2,1} \|_{Y_0^{s'}(\tau)} \\
&\lesssim \tau^{\delta/2} \|  D_{2,1} \|_{Y_1^{s'}(\tau)} \\
    &\lesssim \tau^{\delta/2}\| \Pi_{N}\left(G_{A, \ge}(\Pi_{N}w_{N,k}, \overline{\Pi_{N}u_{N,k}},\Pi_{N}u_{N,k}) -  G_{A, \ge}(w_{N,k},\overline{u_{N,k}},u_{N,k})\right)\|_{Y^{s'}_1(\tau)} \\ &\lesssim \tau^{\delta/2} \| \left(G_{A, \ge}(\Pi_{N}w_{N,k}, \overline{\Pi_{N}u_{N,k}},\Pi_{N}u_{N,k}) -  G_{A, \ge}(w_{N,k},\overline{u_{N,k}},u_{N,k})\right)\|_{Y^{s'}_1(\tau)} \\
    &\lesssim \tau^{\delta/2}\sum_{\substack{x\in  \{\Pi_N w_{N,k}, \Pi_{>N} w_{N,k}\} \\ (y, z)\in \{ \Pi_N u_{N,k}, \Pi_{>N} u_{N,k}\}^2 \\ (x,y,z)\neq (\Pi_N w_{N,k}, \Pi_Nu_{N,k},\Pi_N u_{N,k}) }} \| x\|_{Z_0^{s'}(\tau)} \| y\|_{Y_0^{s'}(\tau)}\| z\|_{Y_0^{s'}(\tau)}.
\end{align}
Using the bounds \eqref{gain small, negative power of N w eq 2}, \eqref{gain small, negative power of N u eq 2} (with $N$ replacing $N/4$) we may deduce that for $\tau\le c \langle M \rangle^{-\beta}$,  
\begin{align} \label{E_(2,1)}
    \| \varphi_\tau D_{2,1} \|_{Y_0^{s'}(\tau)} \le \tau^{\delta/2} C(M) N^{-(s-s')}.
\end{align}
Combining the estimates \eqref{E_1}, \eqref{E_(2,2)}, \eqref{E_(2,1)} with \eqref{E_1-E_2}, we conclude that
\begin{align} \label{G_(A,>=)}
     &\| \varphi_\tau \left(G_{A, \ge}(w, \overline{u},u) -\Pi_{N}G_{A, \ge}(\Pi_{N}w_{N,k}, \overline{\Pi_{N}u_{N,k}},\Pi_{N}u_{N,k})\right)\|_{Y^{s'}_0(\tau)} \\ &\le \|\varphi_\tau D_1 \|_{Y^{s'}_0(\tau)}  +  \|\varphi_\tau D_{2,1} \|_{Y^{s'}_0(\tau)} + \|\varphi_\tau D_{2,2} \|_{Y^{s'}_0(\tau)} \\
    &\le C(M)\tau^{\delta/2} \| w-w_{N,k}\|_{Z^{s'}_0(\tau)} + C(M)\tau^{\delta/2} \| u-u_{N,k}\|_{Y^{s'}_0(\tau)} \\
    &+ \tau^{\delta/2} C(M) N^{-(s-s')}.
\end{align}Using similar argument, we may in fact show that 
\begin{align} \label{G_(A,>)}
    &\| \varphi_\tau \left(G_{A,> }(w, \overline{u},u) -\Pi_{N}G_{A,>}(\Pi_{N}w_{N,k}, \overline{\Pi_{N}u_{N,k}},\Pi_{N}u_{N,k})\right)\|_{Y^{s'}_0(\tau)} \\
    &\le C(M)\tau^{\delta/2} \| w-w_{N,k}\|_{Z^{s'}_0(\tau)} + C(M)\tau^{\delta/2} \| u-u_{N,k}\|_{Y^{s'}_0(\tau)} \\
    &+ \tau^{\delta/2} C(M) N^{-(s-s')}
\end{align}
and
\begin{align} \label{G_B}
     &\| \varphi_\tau \left(G_{B,\#}(w, \overline{w},u) -\Pi_{N}G_{B,\#}(\Pi_{N}w_{N,k}, \overline{\Pi_{N}w_{N,k}},\Pi_{N}w_{N,k})\right)\|_{Y^{s'}_0(\tau)} \\
     &\le C(M)\tau^{\delta/2} \| w-w_{N,k}\|_{Z^{s'}_0(\tau)} + C(M)\tau^{\delta/2} \| u-u_{N,k}\|_{Y^{s'}_0(\tau)} \\
    &+ \tau^{\delta/2} C(M) N^{-(s-s')}
\end{align}
For any  $\# \in\{>, \ge \}$ and some possibly larger $C(M)>0$. 

We now conclude that by combining, \eqref{G_(A,>=)}, \eqref{G_(A,>)} and \eqref{G_B} as well as the form of $F$
\begin{align} \label{Section 2.2 estimate for F}
    \| \varphi_\tau \left[F(u, w) - \Pi_{N}F(\Pi_{N} u_{N,k}, \Pi_{N} w_{N,k})\right]\|_{Y_0^{s'}(\tau)} &\le  4C(M)\tau^{\delta/2} \| w-w_{N,k}\|_{Z^{s'}_0(\tau)} + 4C(M)\tau^{\delta/2} \| u-u_{N,k}\|_{Y^{s'}_0(\tau)} \\
    &+ 4\tau^{\delta/2} C(M) N^{-(s-s')}.
\end{align}

By the triangle inequality applied to \eqref{Equation for u-u_N_k}, the embedding $Z_0^{s'} \hookrightarrow Y_0^{s'}$, and the estimate \eqref{Section 2.2 estimate for F}, we obtain 
\begin{align}\label{16}
    \| u-u_{N,k}\|_{Y^{s'}_0(\tau)} &\le \| w-w_{N,k}\|_{Z^{s'}_0(\tau)} + 4C(M)\tau^{\delta/2} \| w-w_{N,k}\|_{Z^{s'}_0(\tau)} + 4C(M)\tau^{\delta/2} \| u-u_{N,k}\|_{Y^{s'}_0(\tau)} \\
    &+ 4\tau^{\delta/2} C(M) N^{-(s-s')}.
\end{align}
This concludes the proof of \textbf{Claim} A.
\end{proof}
We now prove \textbf{Claim} B.

\begin{proof}[Proof of \textbf{Claim} B]

 Using \eqref{w[u] with phi} and \eqref{Truncated flow in Section 2.1 eq for w}, we can show that 
\begin{align}\label{15}
     w-w_{N,k} &= \varphi S(t)\,(u_0-u_{0,k}) 
+ \varphi_\tau \left( C(u,w)-\,\Pi_{N}C(\Pi_{N} u, \Pi_{N} w)\right)  \\
&\quad+ \varphi_\tau \left( Q(u,w)-\,\Pi_{N} Q(\Pi_{N} u, \Pi_{N} w)\right) \\
&\quad+ \varphi_\tau \left( \Sigma(u,w)-\,\Pi_{N} \Sigma(\Pi_{N} u, \Pi_{N} w)\right).
\end{align}
Using an argument similar to the one deployed in showing
\eqref{Section 2.2 estimate for F} and the estimate for $C$ in Lemma \ref{Estimates for C, Q, S}, we may show that
\begin{align} \label{convergence for C}
    \| \varphi_\tau \left[C(u, w) - \Pi_{N}C(\Pi_{N} u_{N,k}, \Pi_{N} w_{N,k})\right]\|_{Z_0^{s'}(\tau)} &\le  C(M)\tau^{\delta/2} \| w-w_{N,k}\|_{Z^{s'}_0(\tau)} + C(M)\tau^{\delta/2} \| u-u_{N,k}\|_{Y^{s'}_0(\tau)} \\
    &+ \tau^{\delta/2} C(M) N^{-(s-s')}
\end{align}
for some possibly larger $C(M)>0$ a quadratic polynomial in $M$. The key idea is to again write
\begin{align} \label{Section 2.2: difference of cubic}
    &C(u, w) - \Pi_{N}C(\Pi_{N} u_{N,k}, \Pi_{N} w_{N,k}) \\
    &= C(u,w)-C(w_{N,k}, u_{N,k}) - \left[ C(\Pi_{N} u_{N,k}, \Pi_{N} w_{N,k}) -C(w_{N,k}, u_{N,k}) \right]
\end{align}
and then to notice that on the right hand side of \eqref{Section 2.2: difference of cubic}, $C(u,w)-C(w_{N,k}, u_{N,k})$ is a sum of trilinear operators with at least one input belonging to $\{u-u_{N,k},w-w_{N,k}\}$; similarly, $C(\Pi_{N} u_{N,k}, \Pi_{N} w_{N,k}) -C(w_{N,k}, u_{N,k})$ may be written as a sum of trilinear operators with at least one input taking values in $\{\Pi_{>N}u_{N,k}, \Pi_{>N}w_{N,k}, \Pi_{>N/4}u_{N,k}, \Pi_{>N/4}w_{N,k}\}$. Having executed this decomposition, we show \eqref{convergence for C} by using the trilinear estimates for $C$ given in Lemma  \ref{Estimates for C, Q, S} and the structure of the argument used to handle the convergence for $F(u,w)$. Similarly, we may show that
\begin{align}
    \| \varphi_\tau \left[Q(u, w) - \Pi_{N}Q(\Pi_{N} u_{N,k}, \Pi_{N} w_{N,k})\right]\|_{Z_0^{s'}(\tau)} &\le  C(M)\tau^{\delta/2} \| w-w_{N,k}\|_{Z^{s'}_0(\tau)} + C(M)\tau^{\delta/2} \| u-u_{N,k}\|_{Y^{s'}_0(\tau)} \\
    &+ \tau^{\delta/2} C(M) N^{-(s-s')}
\end{align}
for $C(M)>0$ a quartic polynomial in $M$ using the estimates for $Q$ in Lemma \ref{Estimates for C, Q, S} and
\begin{align}
    \| \varphi_\tau \left[S(u, w) - \Pi_{N}S(\Pi_{N} u_{N,k}, \Pi_{N} w_{N,k})\right]\|_{Z_0^{s'}(\tau)} &\le  C(M)\tau^{\delta/2} \| w-w_{N,k}\|_{Z^{s'}_0(\tau)} + C(M)\tau^{\delta/2} \| u-u_{N,k}\|_{Y^{s'}_0(\tau)} \\
    &+ \tau^{\delta/2} C(M) N^{-(s-s')}
\end{align}
for $C(M)>0$ a polynomial of degree 6 in $M$ using the estimates for $\Sigma$ in Lemma \ref{Estimates for C, Q, S}. 

\noindent 
Combining the above estimates with \eqref{15} and applying the triangle inequality, we deduce that
\begin{align}
    \|w-w_{N,k}\|_{Z_0^{s'}(\tau)} &\le C \|u_0-u_{0,k}\|_{\F L^{s',p}} + C(M)\tau^{\delta/2} \| w-w_{N,k}\|_{Z^{s'}_0(\tau)} + C(M)\tau^{\delta/2} \| u-u_{N,k}\|_{Y^{s'}_0(\tau)} \\
    &+ \tau^{\delta/2} C(M) N^{-(s-s')} 
\end{align}
for a larger $C(M)>0$ of degree 6 in $M$. This concludes the proof of \textbf{Claim} B.
\end{proof}
 
We have therefore proved item \eqref{item 1 2.1}. In order to finish the proof of Proposition \ref{Long-time convergence proposition} we now show the proof of item \eqref{item 6},  
which we divide into the following claims, the proofs of which are inspired by Claim 1, Claim 2 and Claim 3  in Section 2 of \cite{KNPSV}. However, we adapt those proofs to the context of Fourier-Lebesgue spaces and their accompanied analysis via $X^{s,b}_{p,q}$ spaces instead of using energy estimates which are not available for us. 

Let now $u(t)\in C([-T,T], \FL^{s',p}), 1/2\le s' < s$ be the strong solution to \eqref{mKdV 2Cauchy} we constructed above.

\textbf{Claim 1}: We have $u \in C_w([-T,T], \mathcal{F}L^{s,p})$, that is: if $t_n \to \bar{t}$ then 
$u(t_n) \rightharpoonup u(\bar{t})$ weakly in $\mathcal{F}L^{s,p}$.
\begin{proof}
We know that 
\[
u \in C([-T,T], \mathcal{F}L^{s',p}),
\]
for every $s'$ with $\tfrac{1}{2} \leq s' < s$.  
Hence, whenever $t_n \to \bar{t}$, we have 
\[
u(t_n) \to u(\bar{t}) \quad \text{strongly in } \mathcal{F}L^{s',p}.
\]
In addition, due to \eqref{bound in item (5) Section 3}, we have the uniform bound
\[
\sup_n \|u(t_n)\|_{\mathcal{F}L^{s,p}} < \infty.
\]

Since $1<p<\infty$, we have that $\F L^{s,p}$ is weakly sequentially compact. 

It follows that there exists a subsequence $t_{n_k}$ and $\bar{u}\in \F L^{s,p}$ such that 
\[
u(t_{n_k}) \rightharpoonup \bar{u} \quad \text{weakly in } \mathcal{F}L^{s,p}.
\]
On the other hand, since $u(t_{n_k}) \to u(t)$ strongly in $\mathcal{F}L^{s',p}$, we deduce that $\bar{u} = u$ in the sense of distributions. We conclude that 
\[
u(t_n) \rightharpoonup u(t) \quad \text{in } \mathcal{F}L^{s,p}.
\]
This proves the claim.
\end{proof}
\textbf{Claim 2}: $u(t)$ is continuous at $t=0$ in the strong $\F L^{s,p}$ topology. 
\begin{proof}
    To establish the claim, let $t_n \to 0$ as $n \to \infty$. 
Our goal is to prove that
\begin{align} \label{Section 2.2 convergence of norms Claim 2}
\|u(t_n)\|_{\mathcal{F}L^{s,p}} \to\|u(0)\|_{\mathcal{F}L^{s,p}} 
\quad \text{as } n \to \infty.
\end{align}
The above convergence, combined with Claim 1 is enough to prove the strong continuity at $t=0$.

Now we proceed by proving \eqref{Section 2.2 convergence of norms Claim 2}.
Using Claim 1 together with the weak lower-semicontinuity of the 
$\mathcal{F}L^{s,p}$-norm, we deduce that
\begin{align} \label{Section 2.2 Claim 2 liminf}
\|u(0)\|_{\mathcal{F}L^{s,p}} \;\leq\; 
\liminf_{n\to\infty} \|u(t_n)\|_{\mathcal{F}L^{s,p}}.
\end{align}
We now proceed to show the bound
\begin{align} \label{Section 2.2 Claim 2 limsup}
    \limsup_{n\to\infty}\|u(t_n)\|_{\mathcal{F}L^{s,p}} \le \|u(0)\|_{\mathcal{F}L^{s,p}}
\end{align}
First, we have that, by \eqref{equation for u_n with cutoffs} and  \eqref{equation for w_n with cutoffs} of $u_N, w_N$ and the triangle inequality, 
\begin{align}
    \|u_N\|_{C([-t_n, t_n],\F L^{s,p})} &\le \| w_N\|_{C([-t_n, t_n],\F L^{s,p}} + \|\varphi_{t_n}F(u_N, w_N)\|_{C([-t_n, t_n],\F L^{s,p})} \\
    \|w_N\|_{C([-t_n, t_n],\F L^{s,p})} &\le \|u(0)\|_{\F L^{s,p}} + \|\varphi_{t_n} C(u_N, w_N)\|_{C([-t_n, t_n],\F L^{s,p})} +  \|\varphi_{t_n} Q(u_N, w_N)\|_{C([-t_n, t_n],\F L^{s,p})} \\
    &+  \|\varphi_{t_n}\Sigma(u_N, w_N)\|_{C([-t_n, t_n],\F L^{s,p})}.
\end{align}
Now, we use the embedding $ Z_0^s(t_n) \hookrightarrow C([-t_n, t_n], \F L^{s,p})$, Lemma \ref{Lemma for F} in combination with the definition of $F(u,w)$ to deduce that 
\begin{align}
    \|\varphi_{t_n}F(u_N, w_N)\|_{C([-t_n, t_n],\F L^{s,p})} \lesssim \|\varphi_{t_n}F(u_N, w_N)\|_{Y_0^s} \lesssim |t_n|^{\delta/2}\|F(u_N, w_N)\|_{Y_1^s}.
\end{align}
Doing the same but this time with the embedding $Z_0^s(t_n) \hookrightarrow C([-t_n, t_n], \F L^{s,p})$ and utilizing Lemma \ref{Lemma for F}, we conclude the following estimates:
\begin{align}
    \|\varphi_{t_n}C(u_N, w_N)\|_{C([-t_n, t_n],\F L^{s,p})} &\lesssim \|\varphi_{t_n}C(u_N, w_N)\|_{Z_0^s} \lesssim |t_n|^{\delta/2}\|C(u_N, w_N)\|_{Z_1^s} \\
    \|\varphi_{t_n}Q(u_N, w_N)\|_{C([-t_n, t_n],\F L^{s,p})} &\lesssim \|\varphi_{t_n}Q(u_N, w_N)\|_{Z_0^s} \lesssim |t_n|^{\delta/2}\|Q(u_N, w_N)\|_{Z_1^s} \\
    \|\varphi_{t_n}\Sigma (u_N, w_N)\|_{C([-t_n, t_n],\F L^{s,p})} &\lesssim \|\varphi_{t_n}\Sigma(u_N, w_N)\|_{Z_0^s} \lesssim |t_n|^{\delta/2}\|\Sigma(u_N, w_N)\|_{Z_1^s}.
\end{align}

From the above estimates, we deduce that 
\begin{align} \label{Section 2.2 inequality for Fourier-Lebesge norm of u_N}
     \|u_N\|_{C([-t_n, t_n],\F L^{s,p})} &\le \|u(0)\|_{\F L^{s,p}} + |t_n|^{\delta/2}\big(\| F(u_N, w_N)\|_{Y_1^s(t_n)} +\| C(u_N, w_N)\|_{Z_1^s(t_n)} + \| Q(u_N, w_N)\|_{Z_1^s(t_n)} \\
     &+ \| \Sigma(u_N, w_N)\|_{Z_1^s(t_n)}\big).
\end{align}
Take $n$ large enough such that $|t_n|\le c \langle M \rangle^{-\beta}$. Then, using the estimates given in Lemma \ref{Estimates for C, Q, S}, Lemma \ref{Estimates for G} and the fact that (see \eqref{bounds for truncated flow 2.1})
\begin{align}
 \| u_N\|_{Y_0^s(c \langle M \rangle^{-\beta})} +   \| w_N\|_{Z_0^s(c \langle M \rangle^{-\beta})} \le C'' M,
\end{align}
together with \eqref{Section 2.2 inequality for Fourier-Lebesge norm of u_N} we obtain
\begin{align}
    \|u_N(t_n) \|_{ \F L^{s,p}}\le \|u_N\|_{C([-t_n, t_n],\F L^{s,p})} &\le \|u(0)\|_{\F L^{s,p}} + C(M)|t_n|^{\delta/2},
\end{align}
where $C(M)$ is a constant depending only on $M$. 
Note that since $u_{N}(t)$ converges strongly to $u(t)$ in $\mathcal{F}L^{s',p}$ for every $1/2\le s' < s$, it follows that 
$u_{N}(t)$ converges weakly to $u(t)$ in $\mathcal{F}L^{s,p}$. Consequently, we obtain
\[
\|u(t_n)\|_{\mathcal{F}L^{s,p}} 
\leq \liminf_{N\to\infty} \|u_N(t_n)\|_{\mathcal{F}L^{s,p}} 
\leq \liminf_{N\to\infty}\Big(C|t_n|^{\tfrac{\delta}{2}} + \|u(0)\|_{\mathcal{F}L^{s,p}}\Big) 
= C|t_n|^{\tfrac{\delta}{2}} + \|u(0)\|_{\mathcal{F}L^{s,p}}.
\]
From this inequality it follows that
\[
\limsup_{n\to\infty} \|u(t_n)\|_{\mathcal{F}L^{s,p}} \leq \|u(0)\|_{\mathcal{F}L^{s,p}}.
\]
Therefore, by combining \eqref{Section 2.2 Claim 2 liminf} and \eqref{Section 2.2 Claim 2 limsup}, we have that
\begin{align}
    \limsup_{n\to\infty} \|u(t_n)\|_{\mathcal{F}L^{s,p}} \le \|u(0)\|_{\mathcal{F}L^{s,p}} \le \liminf_{n\to\infty} \|u(t_n)\|_{\mathcal{F}L^{s,p}},
\end{align}
which implies \eqref{Section 2.2 convergence of norms Claim 2}.
\end{proof}
\textbf{Claim 3.} $u(t)$ is continuous in the $\mathcal{F}L^{s,p}$ topology for every $t \in [-T,T]$.  
\begin{proof}
 To establish this, we modify the truncated initial value problem  by prescribing the data 
$u_{N}(\bar{t}) = u(\bar{t})$ at some fixed time $\bar{t}$. Denote by $v_{N}$ the solution of this new Cauchy problem.  
By applying at time $\bar{t}$ the same reasoning that was used for $u_{N}$ and $u$ at $t=0$, 
we deduce continuity of $u(t)$ in $\mathcal{F}L^{s,p}$ for all $\bar{t} \in [-T,T]$, which completes the proof.

\end{proof}

This concludes the proof of item \eqref{item 6}, and thus the proof of Proposition \ref{Long-time convergence proposition}. 

 \section{Weighted Gaussian Measures, Pairing and Multilinear Gaussian estimates} 
 \label{Section Gaussian Measures} 

In this section, we introduce and study the weighted and unweighted Gaussian measures and prove various bounds they obey, which in turn will be used in the globalization argument. We are inspired by the similar results proved in \cite{KNPSV}, though we must adapt them to the more general setting of $\F L^{s,p}$ for $p\neq 2$ ($p=2$ corresponds to $H^s$) and to the measure $\mu$ we study in this work. Since the support of $\mu$ includes unbounded functions, unlike that of $\mu_n, n\ge 2$ studied in \cite{KNPSV}, our density in the weighted Gaussian measure will no longer be bounded in the focusing case, and instead only be \textit{integrable}. That our density is integrable was shown in \cite{bourgain1994}, and we make use of the accompanying bounds shown in \cite{bourgain1994}. We note that we do not use Egoroff's theorem to prove Proposition \ref{Prop: Properties of weighted measures} below as was the case in \cite{KNPSV} but instead provide a proof relying on more elementary arguments, albeit it is consequently longer.

\subsection{Large Deviation Estimates}

 In this subsection we follow \cite{KNPSV}. Let us fix $n \in \mathbb{N}$ and $i_k \in\{ \pm 1\}$ for $k=1, \ldots, n$. For any vector $\left(k_1, \ldots, k_n\right) \in \mathbb{Z}^n$ and for every set $\mathcal{I} \subset\{1, \ldots, n\}$ we denote by $\left(k_1, \ldots, k_n\right)_{\mathcal{I}} \in \mathbb{Z}^{n-\# \mathcal{I}}$ the vector obtained removing from $\left(k_1, \ldots, k_n\right)$ the entries $k_j$ with $j \in \mathcal{I}$. Also we denote $\mathbb{N}_{\leq n}=\mathbb{N} \cap\{1, \ldots, n\}$. We give the following definition.

\begin{defn}\label{pairings}
 Given $\left(k_1, \ldots, k_n\right) \in \mathbb{Z}^n$ we say that we have: 
 \begin{enumerate}
\item  0 -pairing when $k_l \neq k_m$ for every $l, m \in\{1, \ldots, n\}$ with $i_l \neq i_m$;
\item 1-pairing, and we write $\left(k_l, k_m\right)$, when $k_l=k_m$ for some $l, m \in\{1, \ldots, n\}$ with $i_l \neq i_m$ and $k_m \neq k_h$ for every $h, m \in\{1, \ldots, n\} \backslash\{l, m\}$ such that $i_h \neq i_m$.
\item  $r$-pairing for $r \in\left\{1, \ldots,\left[\frac{n}{2}\right]\right\}$ provided that there exist $X=\left(l_1, \ldots, l_r\right) \in \mathbb{N}_{\leq n}^r, Y=\left(j_1, \ldots, j_r\right) \in$ $\mathbb{N}_{\leq n}^r$, with $\left\{l_1, \ldots, l_r\right\} \cap\left\{j_1, \ldots, j_r\right\}=\emptyset$ and $\#\left\{l_1, \ldots, l_r\right\}=\#\left\{j_1, \ldots, j_r\right\}=r$, such that $k_{l_m}=k_{j_m}$ for every $m=1, \ldots, r$ and $i_{j_m} \neq i_{l_m}$, and moreover $k_m \neq k_h$ for every $h, m \in$ $\{1, \ldots, n\} \backslash\left\{l_1, \ldots, l_r, j_1, \ldots, j_r\right\}$ such that $i_h \neq i_m$. In this case we shall write that $\left(k_1, \ldots, k_n\right)$ are $(X, Y)$-pairing.
\end{enumerate}
\end{defn}
Next we consider linear combinations of multilinear Gaussians $g_{\vec{k}}(\omega)=\prod_{j=1}^n g_{k_j}^{i_j}(\omega)$ where $g_l^1=g_l$ and $g_l^{-1}=\bar{g}_l$. Let $a_{k_1, \ldots, k_n} \in \mathbb{C}$ then we have the following bound, which is a special case of the more general large deviation results proved in \cite{DengNahmodYue2024Gibbs2D}, \cite{DengNahmodYue2022RandomTensors}. 

\begin{prop}\label {large deviations}
 We have the following bounds for a suitable constant $C>0$ :
$$
\left\|\sum_{\vec{k} \in \mathbb{Z}^n} a_{\vec{k}} g_{\vec{k}}(\omega)\right\|_{L_\omega^2}^2 \leq C \sum_{\substack{r \in\left\{1, \ldots,\left[\frac{n}{2}\right]\right\} \\ X=\left(l_1, \ldots, l_r\right) \in \mathbb{N}^r \leq n, Y=\left(j_1, \ldots, j_r\right) \in \mathbb{N}^r \leq n \\\left\{l_1, \ldots, l_r\right\} \cap\left\{j_1, \ldots, j_r\right\}=\emptyset=\\ \#\left\{l_1, \ldots, l_r\right\}=\#\left\{j_1, \ldots, j_r\right\}=r}} \sum_{\vec{h} \in \mathbb{Z}^{n-2 r}}\left(\sum_{\substack{\vec{k} \in \mathbb{Z}^n \\ \vec{k} \text{ is } (X,Y)- \text{pairing} \\ \vec{k}_{(l_1, \ldots, l_r, j_1, \ldots, j_r)}=\vec{h}}}\left|a_{\vec{k}}\right|\right)^2 .
$$
\end{prop}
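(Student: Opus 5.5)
We expand the square and use the independence and moment structure of the complex Gaussians $g_k$. We write
\begin{align}
\Big\|\sum_{\vec k} a_{\vec k} g_{\vec k}\Big\|_{L^2_\omega}^2=\sum_{\vec k,\vec k'} a_{\vec k}\overline{a_{\vec k'}}\,\mathbb{E}\big[g_{\vec k}\overline{g_{\vec k'}}\big].
\end{align}
Since the $g_l$ are independent, centered, rotation-invariant complex Gaussians, $\mathbb{E}[g_l^{a}\bar g_l^{b}]=0$ unless $a=b$, and $\mathbb{E}|g_l|^{2a}=a!$. Therefore the expectation vanishes unless, for every frequency $l$, the number of $g_l$ factors equals the number of $\bar g_l$ factors in the combined product $g_{\vec k}\overline{g_{\vec k'}}$. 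The expectation is also bounded by a constant $C(n)$ depending only on $n$.

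First we sort each $\vec k$ according to its maximal pairing structure within itself, in the sense of Definition \ref{pairings}. Every $\vec k$ is $(X,Y)$-pairing for some (possibly empty) choice of pairs $(X,Y)$ with $r\le[n/2]$, such that the remaining entries $\vec h=\vec k_{(X,Y)}$ contain no conjugate pair. The number of such pairing patterns is bounded in terms of $n$. By Cauchy--Schwarz over patterns, it therefore suffices to bound the contribution of a fixed pattern $(X,Y)$ on the $\vec k$ side and a fixed pattern $(X',Y')$ on the $\vec k'$ side.

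For fixed patterns we need a nonzero expectation. All internal pairs of $\vec k$ are already balanced, and the unpaired entries $\vec h$ of $\vec k$ contain no conjugate pair. Hence each unpaired entry must be matched by an entry of $\vec k'$, and in fact by an unpaired one. The argument is a little more careful than it looks, since multiplicities are allowed, so one may need to reassign pairs. What we conclude is that, up to a permutation determined by the pattern, $\vec h=\vec h'$. The values of the paired coordinates on each side are unconstrained. So the contribution is bounded by
\begin{align}
C\sum_{\vec h}\Big(\sum_{\vec k:\,(X,Y)\text{-pairing},\,\vec k_{(X,Y)}=\vec h}|a_{\vec k}|\Big)\Big(\sum_{\vec k':\,(X',Y')\text{-pairing},\,\vec k'_{(X',Y')}=\sigma\vec h}|a_{\vec k'}|\Big).
\end{align}
Here we bound $|\mathbb{E}|\le C(n)$ and take absolute values inside. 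Then $ab\le \tfrac12(a^2+b^2)$, after relabeling $\sigma\vec h$, gives the right-hand side of the proposition. The case $r=0$ is included as the fully unpaired term; it is implicit in the sum, or absorbed if the statement is read as including $r=0$.

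The main obstacle is the combinatorial step showing that nonvanishing expectation forces the unpaired parts $\vec h$ and $\vec h'$ to coincide up to a bounded permutation. When frequencies repeat with multiplicity, the decomposition into pairs is not unique. We handle this by fixing a canonical choice of maximal pairing for each $\vec k$ (for instance lexicographically smallest), so that the patterns partition $\mathbb{Z}^n$. We then check by counting $g_l$ versus $\bar g_l$ multiplicities frequency by frequency that the unpaired residues must match. Alternatively, one can simply invoke the general random tensor bounds of \cite{DengNahmodYue2022RandomTensors}, of which this proposition is a special case.
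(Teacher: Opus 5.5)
Your argument is correct, and it takes a genuinely different route from the paper. The paper does not prove this proposition: it quotes it as a special case of the random tensor and large deviation estimates of \cite{DengNahmodYue2024Gibbs2D, DengNahmodYue2022RandomTensors}. You instead give a direct second-moment (Wick) computation.

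Your key combinatorial step is cleanest in the following form. Write $p_l$, $q_l$ for the number of entries of $\vec k$ equal to $l$ with $i_j=1$, respectively $i_j=-1$. Then $\mathbb{E}[g_{\vec k}\overline{g_{\vec k'}}]\neq 0$ is equivalent to $p_l-q_l=p'_l-q'_l$ for every $l$. Any $(X,Y)$-pairing must remove exactly $\min(p_l,q_l)$ pairs at frequency $l$. Hence:
\begin{itemize}
\item the unpaired signed multisets of $\vec k$ and $\vec k'$ coincide;
\item $r=r'$, since $r=\#\{j: i_j=1\}-\sum_l (p_l-q_l)_+$;
\item the paired coordinates are unconstrained.
\end{itemize}

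There are two small corrections.
\begin{itemize}
\item When unpaired values repeat, the permutation is not determined by the patterns. Bound the indicator by a sum over the at most $(n-2r)!$ sign-preserving bijections between the two sets of unpaired positions. Each induces a bijection of $\mathbb{Z}^{n-2r}$, so Cauchy--Schwarz in $\vec h$ then applies.
\item The $r=0$ term cannot be ``absorbed''. With the printed range $r\in\{1,\dots,[n/2]\}$ the inequality is false. Take $n=2$, $i_1=1$, $i_2=-1$, and $a$ supported on $k_1\neq k_2$: the left side equals $\sum|a_{\vec k}|^2$, while every term on the right side vanishes. So the statement must be read with $r=0$ included, which is how it is actually used in Lemma \ref{0-pairing}.
\end{itemize}

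As for what each route buys: yours is elementary and self-contained, with a constant depending only on $n$ (through $n!$ and the number of pairing patterns). It is exactly what this $L^2_\omega$ statement requires. The cited results are far more general, giving operator-norm bounds for random tensors and high-probability estimates, but none of that is needed here. Higher moments would in any case follow from your bound by hypercontractivity on the Wiener chaoses of order at most $n$.
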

\subsection{Gaussian Measures and basic properties}
The following lemma is inspired by the results proved in Section 3.1 of \cite{KNPSV}, which we adapted to the context of $\F L^{s,p}$ spaces.
\begin{lem} We denote by $\mu$ as the Gaussian measure associated with the law of the random variable \eqref{random vector}. We note the following properties regarding $\mu$:

\begin{enumerate}
    \item $\mu(\F L^{s,p}(\T))=1, \quad \forall s<1-1/p$

    \item For each $s<1-1/p, p<\infty$, there exists $k, K>0$ depending on $s,p$ such that $$\mu\{u\in\F L^{s,p}(\T): \norm{u}_{\F L^{s,p}(\T)} >M \}\le K e^{-k M^2}$$.
\end{enumerate}
\end{lem}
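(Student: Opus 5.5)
The plan is to work directly with the random series \eqref{eq:random-series}, writing $\varphi(x,\omega)=(2\pi)^{-1/2}\sum_j \langle j\rangle^{-1}g_j(\omega)e^{ijx}$. In this notation
\begin{align}
\|\varphi(\omega)\|_{\F L^{s,p}}^p \;=\; (2\pi)^{-p/2}\sum_{j\in\Z}\langle j\rangle^{(s-1)p}|g_j(\omega)|^p .
\end{align}
For item (1) I take expectations and use $\mathbb{E}|g_j|^p=c_p<\infty$. This gives $\mathbb{E}\|\varphi\|_{\F L^{s,p}}^p=c_p'\sum_j\langle j\rangle^{(s-1)p}$, and the sum is finite exactly when $(1-s)p>1$, that is, $s<1-1/p$. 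Hence $\varphi\in\F L^{s,p}$ almost surely, so $\mu(\F L^{s,p})=1$. The only technical point is that $\varphi$ is a measurable $\F L^{s,p}$-valued random variable. For $p<\infty$ this follows because the partial sums $\Pi_N\varphi$ are measurable and converge almost surely in $\F L^{s,p}$ by monotone convergence on the series above. This is also the construction in \cite{NahmodOhReyBelletStaffilani2012}.

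For item (2) I would rather avoid Fernique's theorem, although it would settle the claim directly. Instead I bound the moments with the right growth in $q$, which is the standard route in this line of work. Fix $q\ge p$. By Minkowski's inequality in $L^{q/p}_\omega$ applied to the sum of nonnegative terms,
\begin{align}
\big\|\|\varphi\|_{\F L^{s,p}}\big\|_{L^q_\omega}^p=\Big\|\sum_j \langle j\rangle^{(s-1)p}|g_j|^p\Big\|_{L^{q/p}_\omega}\lesssim \sum_j\langle j\rangle^{(s-1)p}\|g_j\|_{L^q_\omega}^p\lesssim q^{p/2}\sum_j\langle j\rangle^{(s-1)p}.
\end{align}
The last step uses the Gaussian moment bound $\|g_j\|_{L^q_\omega}\lesssim\sqrt q$, which is also a special case of the hypercontractivity \eqref{hypercontractivity}. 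The result is $\big\|\|\varphi\|_{\F L^{s,p}}\big\|_{L^q_\omega}\le C_{s,p}\sqrt q$ for all $q\ge p$.

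Chebyshev's inequality then gives
\begin{align}
\mu(\|u\|_{\F L^{s,p}}>M)\le \big(C_{s,p}\sqrt q/M\big)^q .
\end{align}
Choosing $q=M^2/(eC_{s,p})^2$ makes the right side at most $e^{-q}=e^{-kM^2}$. This choice is admissible once $M$ is large enough that $q\ge p$. For smaller $M$ the probability is at most $1$, and that case is absorbed by taking $K$ large. This yields $K e^{-kM^2}$ with constants depending only on $s$ and $p$.

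The main obstacle is only bookkeeping: the moment bound must grow like $\sqrt q$ and not faster, since that is what gives Gaussian rather than exponential tails. The Minkowski step is where this could fail. It works because $q/p\ge1$, so the $L^{q/p}$ norm of a sum of nonnegative terms is at most the sum of their norms, and the constant stays independent of $q$.
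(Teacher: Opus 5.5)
Your proposal is correct and follows the paper's proof essentially step for step. Part (1) uses the same $p$-th moment computation. For part (2), your triangle inequality in $L^{q/p}_\omega$ is exactly the paper's Minkowski interchange $\|\cdot\|_{L^q_\omega\ell^p_j}\le\|\cdot\|_{\ell^p_jL^q_\omega}$, combined with $\|g_j\|_{L^q_\omega}\lesssim\sqrt q$. The only difference is that you carry out the Chebyshev optimization in $q$ by hand, which is the content of the Tzvetkov--Visciglia proposition the paper cites. Requiring only $q\ge p$, rather than the paper's $q\ge 4$, also lets you avoid the paper's incidental assumption $2<p<4$.
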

\begin{proof}
For (1), we note that since $(1-s)p>1$
\begin{align}
    \mathbf{E} \left(\norm{\sum_{j\in \Z} \frac{g_j(\omega)}{\langle j \rangle } e^{ijx}}_{\F L^{s,p}}^p \right) &= \sum_{j\in \Z} \mathbf{E}(|g_j(\omega)|^p)\langle j\rangle ^{-(1-s)p } \\
    &\le C_p \sum_{j\in \Z} \langle j\rangle ^{-(1-s)p }<\infty
\end{align}

where $C_p:= \mathbf{E}(|g_1(\omega)|^p)$.

    For (2), we have, by the Minkowski inequality for $q\ge 4$, and thus $q\ge p$ since $2<p<4$, 
    \begin{align}
        \left(\mathbf{E} \left(\norm{\sum_{j\in \Z} \frac{g_j(\omega)}{\langle j \rangle } e^{ijx}}_{\F L^{s,p}}^q \right)\right)^{1/q} & = \left(\mathbf{E} \left(\norm{\frac{g_j(\omega)}{\langle j \rangle^{1-s} } }_{l^{p}}^q \right)\right)^{1/q} \\
      &= \norm{\norm{\frac{g_j(\omega)}{\langle j \rangle^{1-s} } }_{\ell^p_j}}_{L^q(d\omega)} \\
      &\le \norm{\norm{\frac{g_j(\omega)}{\langle j \rangle^{1-s} } }_{L^q(d\omega)}}_{\ell^p_j}\\
        &= \norm{\left(\mathbf{E}\left|\frac{g_j(\omega)}{\langle j \rangle^{1-s} }\right|^q\right)^{1/q}}_{\ell^p_j} \\
        &\lesssim q^{1/2} \norm{\frac{1}{\langle j \rangle^{1-s} }}_{\ell^{p}_j} \\
        &\lesssim q^{1/2} \left(\sum_{j\in \Z}\frac{1}{\langle j \rangle^{p(1-s)} }\right)^{1/p} \\
        &\lesssim  q^{1/2} C(s,p),
    \end{align}
    where we used the condition $s<1-1/p$ and the fact that for any centered, normalized Gaussian random variable $g_j$, 
    \begin{align}
        \left(\mathbf{E}(|g_j(\omega)|^q)\right)^{1/q} \lesssim q^{1/2}.
    \end{align}

    We conclude the Gaussian bound (2) by the following Proposition (see \cite{TV-ASENS-13} for the proof).
\begin{prop}
 \label{Chebyshev}
       Let $F:(\Omega, \mathcal{A}, \mu) \rightarrow \mathbf{C}$ be measurable and $C>0$ be such that
$$
\|F\|_{L^q(d\mu)} \leq C q^{1/2}, \quad  \forall q \in[4, \infty).
$$

Then
$$
\mu\left\{\omega \in \Omega:|F(\omega)|>\lambda\right\} \leq e^{-\frac{1}{2e}\left(\frac{\lambda}{C}\right)^{2}}, \quad \forall \lambda\ge C(4e)^{1/2}.
$$
\end{prop}
\end{proof}    
\subsection{Weighted Gaussian Measures}

    Next, we introduce the family of measures
\begin{align} \label{Truncated rho definiiton}
    d\rho_{R,N,\pm} :=  Z_{R, N, \pm }^{-1} F_{R,N, \pm}(u) d\mu,
\end{align}
where 
\begin{align}
    F_{R,N, \pm }(u) := \chi_{R^2}(E_1(\Pi_N u)) \exp\left(\mp \int_\T |\Pi_N u|^4\,dx\right)
\end{align}
and
\begin{align}
     Z_{R, N, \pm } := \int \chi_{R}(E_1(\Pi_N u)) \exp\left(\mp \int_\T |\Pi_N u|^4\,dx\right) d\mu,
\end{align}
where we recall that, due to \eqref{def of E_1}, 
\begin{align}
    E_1(u)= \int_\T |u|^2\,dx.
\end{align}
and $\Pi_N$ was introduced in \eqref{Definition of Pi_N}.
We also define
\begin{align} \label{untruncated rho definition}
d\rho_{R,\pm} : =Z_{R, \pm }^{-1} F_{R, \pm} d\mu
\end{align}
where
\begin{align}
    F_{R, \pm}:= \chi_{R}(E_1(u)) \exp\left(\mp \int_\T |u|^4\,dx\right)
\end{align}
and
\begin{align}
    Z_{R, \pm} := \int \chi_{R}(E_1(u)) \exp\left(\mp \int_\T |u|^4\,dx\right) d\mu.
\end{align} 

We first show that the measures $\rho_{R,\pm}$ are well-defined, which is equivalent to showing that
\begin{align} \label{well-definedness of rho}
    \chi_{R}(E_1(u)) \exp\left( \pm \int |u|^4\right) \in L^1(d\mu)
\end{align}
and furthermore in the defocusing case, that
\begin{align} \label{Z non-zero in defocusing case}
    Z_{R, -} \neq 0.
\end{align}
The result \eqref{well-definedness of rho} for the focusing case follows from the following lemma we will prove:
\begin{lem}\label{Non-bourgain integrability}
    For any $R>0$ the function 
    \begin{align}
        \exp\left(\left\|  \sum_n \frac{g_n(\omega) e^{inx}}{\langle n\rangle}\right\|_{L^4_x}^4\right) \chi_{R}\left(\sum_{n} \frac{|g_n(\omega)|^2}{\langle n\rangle^2} \right)
    \end{align}
    lies in $L^q(d\mu)$ for any $1\le q <\infty$.
\end{lem}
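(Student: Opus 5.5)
Write $u=u^\omega=\sum_n \frac{g_n(\omega)}{\langle n\rangle}e^{inx}$, a Gaussian field whose distribution is $\mu$. On the support of the cutoff we have $\|u\|_{L^2_x}^2\lesssim R$. Since $q$ can be absorbed into the exponent, the statement is equivalent to
\[
\mathbf{E}\bigl[e^{q\|u\|_{L^4}^4}\mathbf 1_{\{\|u\|_{L^2}^2\lesssim R\}}\bigr]<\infty \quad\text{for every } q.
\]
The plan is to control $\|u\|_{L^4}^4$ by the Gagliardo--Nirenberg inequality \eqref{GN specific intro},
\[
\|u\|_{L^4}^4\le \|\langle\nabla\rangle^{1/4}u\|_{L^8}^{8/5}\,\|u\|_{L^2}^{12/5}\lesssim R^{6/5}\,\|\langle\nabla\rangle^{1/4}u\|_{L^8}^{8/5},
\]
and then show that $X(\omega):=\|\langle\nabla\rangle^{1/4}u\|_{L^8_x}$ has Gaussian tails. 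The gain is that the exponent $8/5<2$ is subquadratic, so $e^{cX^{8/5}}$ is integrable for every $c$ as soon as $X$ has Gaussian tails. No dyadic decomposition in the style of Bourgain is needed.

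\textbf{Step 1: moment bound for $X$.} The function $v=\langle\nabla\rangle^{1/4}u=\sum_n g_n\langle n\rangle^{-3/4}e^{inx}$ still has square-summable coefficients, since $\sum\langle n\rangle^{-3/2}<\infty$. Fix $x$. Then $v(x)$ is a complex Gaussian with variance $\sigma^2=\sum_n\langle n\rangle^{-3/2}<\infty$, uniformly in $x$. For $r\ge 8$, Minkowski's inequality gives
\[
\|X\|_{L^r_\omega}\le \bigl\|\|v(x)\|_{L^r_\omega}\bigr\|_{L^8_x}\lesssim \sqrt r\,\sigma .
\]
This is the hypercontractivity/Gaussian moment bound already used in the proof of the previous lemma. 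By Proposition \ref{Chebyshev}, it follows that $\mu(X>\lambda)\le e^{-c\lambda^2}$ for $\lambda\ge\lambda_0$.

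\textbf{Step 2: integrate the tail.} Using the layer-cake formula,
\[
\mathbf{E}\bigl[e^{qC R^{6/5}X^{8/5}}\bigr]\le e^{qCR^{6/5}\lambda_0^{8/5}}+\int_{\lambda_0}^\infty \frac{d}{d\lambda}\bigl(e^{qCR^{6/5}\lambda^{8/5}}\bigr)\,e^{-c\lambda^2}\,d\lambda .
\]
The right-hand side is finite because $\lambda^{8/5}=o(\lambda^2)$. Combined with the Gagliardo--Nirenberg bound on the support of $\chi_R$, this gives the lemma for every $1\le q<\infty$.

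\textbf{Main obstacle.} The one point to check carefully is the Gagliardo--Nirenberg inequality \eqref{GN specific intro} on $\T$, including the inhomogeneous correction. I would verify it through interpolation: $L^4$ sits between $L^2$ and $W^{1/4,8}$ with $\theta=2/5$. The scaling check is $\frac14=\frac{\theta}{8}-\frac{\theta}{4}\cdot\frac14\cdot\ldots$, which I would confirm via complex interpolation of $[L^2,W^{1/4,8}]_{\theta}=W^{\theta/4,p_\theta}$ together with Sobolev embedding on $\T$. I would invoke Proposition \ref{GN proposition} as stated in the paper. Any extra lower-order term $\|u\|_{L^2}$ that appears is harmless, since it is bounded by $R^{1/2}$ on the support of the cutoff.
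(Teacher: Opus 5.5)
Your proposal is correct and follows essentially the paper's argument: the Gagliardo--Nirenberg bound \eqref{GN specific} on the support of the mass cutoff, Gaussian tails for $\|\langle\nabla\rangle^{1/4}u\|_{L^8}$ (the paper cites these from Tzvetkov--Visciglia, while you derive them directly via Minkowski, Gaussian moment bounds and Proposition \ref{Chebyshev}), and a layer-cake integration in which a subquadratic power is beaten by the Gaussian tail. The only difference is cosmetic. The paper first converts this into the tail bound $\mu\{\|u\|_{L^4}>\lambda,\ \|u\|_{L^2}\le\sqrt{R}\}\le Ce^{-c\lambda^5R^{-3/2}}$ and then integrates against $e^{q\lambda^4}$, which is your computation after the substitution $X=\lambda^{5/2}R^{-3/4}$.
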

    The proof we present of Lemma \ref{Non-bourgain integrability} uses a Gagliardo-Nirenberg-type inequality (see Proposition \ref{GN}), the hypercontractivity of Gaussian random variables (see \eqref{hypercontractivity}), and a Chebychev-type inequality (see Proposition \ref{Chebyshev}). It also extends to densities of the form
    \begin{align}
        \exp\left(\left\|  \sum_n \frac{g_n(\omega) e^{inx}}{\langle n\rangle}\right\|_{L^p_x}^p\right) \chi_{R}\left(\sum_{n} \frac{|g_n(\omega)|^2}{\langle n\rangle^2} \right)
    \end{align}
    for any $2<p<6$. The first two ingredients replace the argument introduced by Bourgain in \cite{bourgain1994}.
\begin{prop} \label{GN proposition}
The following estimate holds,
    \begin{align} \label{GN}
        \|u\|_{L^4(\T)} \le \|\langle \nabla \rangle^\sigma u\|_{L^{\tilde{q}}(\T)}^{\theta} \|u\|_{L^2(\T)}^{1-\theta},
    \end{align}
    where $\sigma\ge 0, 1\le \tilde{q}\le \infty$ are such that 
    \begin{align} \label{eq for theta}
        \frac{1}{4}= \theta\left(\frac{1}{\tilde{q}}-\sigma\right)+\frac{1-\theta}{2}
    \end{align} and $\langle \nabla \rangle^s$ is the Fourier multiplier given on the Fourier side by $n \mapsto (1+n^2)^{s/2}, n\in \Z$.
    \end{prop}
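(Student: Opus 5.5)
The plan is to split \eqref{GN} into an exact Hölder interpolation step, which costs no constant, followed by a Sobolev embedding for the Bessel potential $\langle \nabla \rangle^{-\sigma}$ on $\T$.

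\textbf{Step 1 (Hölder).} Define $r\in[2,\infty]$ by $\frac1r=\frac1{\tilde q}-\sigma$. This is the regime in which \eqref{eq for theta} has a solution $\theta\in[0,1]$: since $\frac14$ lies between $\frac12$ and $\frac1r$, we need $\frac1r\le \frac14$, so $r\in[4,\infty]$. With this $r$, \eqref{eq for theta} reads $\frac14=\frac{\theta}{r}+\frac{1-\theta}{2}$. Writing $|u|^4=|u|^{4\theta}|u|^{4(1-\theta)}$ and applying Hölder with exponents $\frac{r}{4\theta}$ and $\frac{2}{4(1-\theta)}$ (whose reciprocals sum to $1$ by \eqref{eq for theta}), we get, with constant exactly $1$,
\begin{align}
\|u\|_{L^4(\T)}\le \|u\|_{L^r(\T)}^{\theta}\,\|u\|_{L^2(\T)}^{1-\theta}.
\end{align}
For $\theta=0$ this is trivial, and for $\theta=1$ it reduces to the embedding in Step 2.

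\textbf{Step 2 (Sobolev for Bessel potentials).} Write $u=\langle\nabla\rangle^{-\sigma}f$ with $f=\langle\nabla\rangle^{\sigma}u$. It then remains to show $\|\langle\nabla\rangle^{-\sigma}f\|_{L^r}\le\|f\|_{L^{\tilde q}}$. The kernel $G_\sigma(x)=\frac1{2\pi}\sum_n\langle n\rangle^{-\sigma}e^{inx}$ is positive and has $\int_\T G_\sigma=\langle 0\rangle^{-\sigma}=1$. It also satisfies $G_\sigma(x)\lesssim |x|^{\sigma-1}$ near $0$, which follows by periodizing the Euclidean Bessel kernel, itself positive with total mass $1$.

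\begin{itemize}
\item When $r=\tilde q$, that is $\sigma=0$, the kernel is the identity and the constant is $1$.
\item When $\tilde q<r<\infty$, the Hardy--Littlewood--Sobolev inequality on $\T$ (equivalently, Young's inequality in weak-type form) gives the bound with a constant $A(\sigma,\tilde q)$.
\item The endpoint $r=\infty$ (that is, $\sigma\tilde q'>1$ in the form $\sigma=1/\tilde q$ excluded, or $\sigma>1/\tilde q$) follows from Hölder, because $G_\sigma\in L^{\tilde q'}$ when $(1-\sigma)\tilde q'<1$.
\end{itemize}

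Combining Steps 1 and 2 gives \eqref{GN} with the multiplicative factor $A^{\theta}$.

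\textbf{Main obstacle.} The hard step is obtaining the constant exactly $1$ as written in \eqref{GN}, rather than $A(\sigma,\tilde q)^\theta$. Step 1 is sharp, so the whole issue is the operator norm $L^{\tilde q}\to L^r$ of $\langle\nabla\rangle^{-\sigma}$. Mass $1$ of the positive kernel gives norm $\le1$ only for $r=\tilde q$. For $r>\tilde q$, Young gives $\|G_\sigma\|_{L^a}$ with $1+\frac1r=\frac1a+\frac1{\tilde q}$, and $\|G_\sigma\|_{L^a}\ge\|G_\sigma\|_{L^1}=1$ under normalized Lebesgue measure.

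What I would try first is to absorb the constant by a normalization adapted to how \eqref{GN} is used later, with $\sigma=\frac14$, $\tilde q=8$, $\theta=\frac25$, as in \eqref{GN specific intro}. Since the Notation section allows constants to change from line to line, the inequality enters Lemma \ref{Non-bourgain integrability} only through $\|u\|_{L^4}^4\lesssim\|\langle\nabla\rangle^{1/4}u\|_{L^8}^{8/5}\|u\|_{L^2}^{12/5}$. There the harmless factor $A^{8/5}$ is absorbed into the exponential moment estimates via hypercontractivity and Proposition \ref{Chebyshev}. I would therefore present \eqref{GN} as holding with an implicit constant depending only on $(\sigma,\tilde q)$, and point out explicitly that it is exactly $1$ in the case $\sigma=0$.
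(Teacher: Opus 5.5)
\textbf{The gap is in Step 1.} You claim that \eqref{eq for theta} admits $\theta\in[0,1]$ only when $0\le\frac1{\tilde q}-\sigma\le\frac14$, i.e.\ only when your exponent $r\in[4,\infty]$ exists. That is false. Solving gives $\theta=\frac14\big(\frac12-\frac1{\tilde q}+\sigma\big)^{-1}$, which lies in $(0,1]$ for every value $\frac1{\tilde q}-\sigma\le\frac14$, including negative ones.

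Moreover, $\theta<\frac12$ holds exactly when $\sigma>\frac1{\tilde q}$, and this is the only regime in which \eqref{GN} is useful in the paper. The case actually used, $\sigma=\frac14$, $\tilde q=8$, $\theta=\frac25$, has $\frac1{\tilde q}-\sigma=-\frac18<0$, so your $r$ does not exist there. The best your Hölder-then-embed scheme gives in that regime is $r=\infty$:
$\|u\|_{L^4}\le\|u\|_{L^\infty}^{1/2}\|u\|_{L^2}^{1/2}\lesssim\|\langle\nabla\rangle^{1/4}u\|_{L^8}^{1/2}\|u\|_{L^2}^{1/2}$.
This is $\theta=\frac12$, not $\frac25$, and the loss is fatal for Lemma \ref{Non-bourgain integrability}. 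With $\theta=\frac12$ the tail bound becomes $\exp(-c\lambda^4/R)$, which does not beat $e^{q\lambda^4}$ for large $q$. With $\theta=\frac25$ one gets $\exp(-c\lambda^5R^{-3/2})$, which does.

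At $\sigma=\frac1{\tilde q}$ your Step 2 would also need the false critical embedding $H^{1/\tilde q,\tilde q}\hookrightarrow L^\infty$. Altogether, your argument only proves \eqref{GN} for $\theta\in(\frac12,1]$.

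\textbf{The missing idea} is to interpolate in the smoothness index, not only in the Lebesgue exponent. The paper proceeds in the opposite order from you. First it uses $\|u\|_{H^{\sigma\theta,\tilde q_\theta}}\lesssim\|u\|_{H^{\sigma,\tilde q}}^{\theta}\|u\|_{L^2}^{1-\theta}$ with $\frac1{\tilde q_\theta}=\frac\theta{\tilde q}+\frac{1-\theta}2$, which is complex interpolation of Bessel potential (Triebel--Lizorkin) spaces. Then it applies the Sobolev embedding $H^{\sigma\theta,\tilde q_\theta}\hookrightarrow L^4$. This embedding is never an endpoint, because \eqref{eq for theta} says exactly $\frac1{\tilde q_\theta}-\sigma\theta=\frac14$, which forces $\tilde q_\theta<4$.

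If you prefer to stay elementary, you can repair your route for $\sigma>\frac1{\tilde q}$ by inserting an $L^\infty$ Gagliardo--Nirenberg bound:
\begin{itemize}
\item Split $u$ at frequency $K\ge1$.
\item For the low part use $\|P_{\le K}u\|_{L^\infty}\lesssim K^{1/2}\|u\|_{L^2}$.
\item For the high part use Bernstein on dyadic blocks, summable since $\sigma>\frac1{\tilde q}$: $\|P_{>K}u\|_{L^\infty}\lesssim K^{\frac1{\tilde q}-\sigma}\|\langle\nabla\rangle^\sigma u\|_{L^{\tilde q}}$.
\item Optimize in $K$. If the optimal $K$ is below $1$, use $\|u\|_{L^2}\lesssim\|\langle\nabla\rangle^\sigma u\|_{L^{\tilde q}}$, valid since $\sigma>\frac1{\tilde q}$. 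This yields $\|u\|_{L^\infty}\lesssim\|\langle\nabla\rangle^\sigma u\|_{L^{\tilde q}}^{2\theta}\|u\|_{L^2}^{1-2\theta}$.
\item Then apply $\|u\|_{L^4}\le\|u\|_{L^\infty}^{1/2}\|u\|_{L^2}^{1/2}$ to get \eqref{GN}.
\end{itemize}

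Your remark about the constant is correct but is not the issue. The paper's own proof also only yields \eqref{GN} up to a multiplicative constant, and that is all that is used.
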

    \begin{proof}
    We first define
    \begin{align}
        H^{s,q}(\T) := \{u\in \mathcal{S}'(\T): \| \langle \nabla \rangle^s u \|_{L^q(\T)}<\infty \}
    \end{align}
    The proof of \eqref{GN} easily follows by the Sobolev embedding (see Corollary 1.2 of \cite{MR3119672}) $H^{\sigma_\theta, q_\theta}(\T) \hookrightarrow L^4(\T)$ for 
        \begin{align}
            \frac{1}{4}= \frac{1}{\tilde{q}_\theta}-\sigma_\theta
        \end{align}
and $\tilde{q}_\theta< 4$ and the interpolation estimate (See Theorem 2 in Section 3.6.1 of \cite{ST}, recalling that $H^{\sigma, q}(\T)= F^s_{q,2}(\T)$, where  $F^s_{q,2}(\T)$ is Triebel-Lizorkin space)
\begin{align}
    \|u\|_{H^{\sigma_\theta, \tilde{q}_\theta}(\T)} &\le \|u\|_{H^{\sigma,\tilde{q}}(\T)}^\theta \|u\|_{H^{0,2}(\T)}^{1-\theta} \\
    &=\|u\|_{H^{\sigma ,\tilde{q}}(\T)}^\theta \|u\|_{L^2(\T)}^{1-\theta}
\end{align}
for
\begin{align}
    \frac{1}{\tilde{q}_\theta} =\frac{\theta}{\tilde{q}} + \frac{1-\theta}{2}, \quad \sigma_\theta&= \sigma \theta.
\end{align}
Therefore, we have that that \eqref{GN} holds provided \eqref{eq for theta}.
\end{proof}
Letting $\theta=2/5, \sigma=1/4, \tilde{q}=8, \sigma_\theta=(2/5)(1/4)=1/10, \tilde{q}_\theta=20/7<4$, we have that 
\begin{align} \label{GN specific}
        \|u\|_{L^4(\T)} \le \|\langle \nabla \rangle^{1/4} u\|_{L^{8}(\T)}^{2/5} \|u\|_{L^2(\T)}^{3/5}.
    \end{align}

    We will also use the following hyper-contractivity estimate for Gaussian variables (see \cite{TT-2010})
\begin{align} \label{hypercontractivity}
    \left\| \sum_{j\in \Z} c_j g_j(\omega) \right\|_{L^r(d\mu)} \le C \sqrt{r} \left( \sum_{n\in \Z} |c_n|^2 \right)^{1/2}
\end{align}
valid for any $r\ge 2$ and Proposition \ref{Chebyshev}.
\begin{proof}[Proof of Lemma \ref{Non-bourgain integrability}]
In the following, we will take
\begin{align}
    u(x,\omega)= \sum_{j\in \Z} \frac{g_j(\omega) e^{ijx}}{\langle j\rangle}.
\end{align}
On the set $\{\omega: \|u\|_{L^2}\le R^{1/2}\}$, the inequality \eqref{GN specific} gives that
    \begin{align}
        \|\langle \nabla \rangle^{1/4} u\|_{L^{8}(\T)} \ge \frac{\|u\|_{L^4(\T)}^{5/2}}{R^{3/4}}.
    \end{align}
    Therefore, we have that
\begin{align} \label{Lemma 4.5 eq 1}
    \mu \{ \omega: \|u\|_{L^4(\T)} > \lambda, \|u\|_{L^2(\T)}\le R^{1/2} \} \le \mu \left\{ \omega: \|\langle \nabla \rangle^{1/4} u\|_{L^{8}(\T)} > \frac{\lambda^{5/2}}{R^{3/4}} \right\}.
\end{align}
By the argument in Proposition 4.4 of \cite{TV-ASENS-13}, we have that 
\begin{align}
    \mu\{\omega: \|\langle \nabla \rangle^\sigma u\|_{L^{\tilde{q}}(\T)}> M\} \le C e^{-c M^2}.
\end{align}
Taking $M=\frac{\lambda^{5/2}}{R^{3/4}}$ and using \eqref{Lemma 4.5 eq 1}, we have that 
\begin{align} \label{Lemma 4.5 eq 2}
    \mu \{ \omega: \|u\|_{L^4(\T)} > \lambda, \|u\|_{L^2(\T)}\le \sqrt{R} \} \le C \exp \left(-c \lambda^5 R^{-3/2}\right).
\end{align}
 We now compute, recalling that $\chi_{R}(\cdot) \le 1_{\{ |\cdot|\le 2R\}}$, 
\begin{align}
    &\left\| \exp\left(\left\|  \sum_n \frac{g_n(\omega) e^{inx}}{\langle n\rangle}\right\|_{L^4_x}^4\right) \chi_{R}\left(\sum_{n} \frac{|g_n(\omega)|^2}{\langle n\rangle^2} \right) \right\|_{L^q(d\mu)}^q \\
    &\le \int_{\{\|u\|_{L^2(\T)} \le \sqrt{2R}\}} \exp\left(q\left\|  \sum_n \frac{g_n(\omega) e^{inx}}{\langle n\rangle}\right\|_{L^4_x}^4\right)d\mu \\
    &= \int_{\{\|u\|_{L^2(\T)} \le \sqrt{2R}\}} \left[\exp\left(q\left\|  \sum_n \frac{g_n(\omega) e^{inx}}{\langle n\rangle}\right\|_{L^4_x}^4\right)-1\right]d\mu + \mu\{ \|u\|_{L^2(\T)}\le \sqrt{2R} \}  \\
    &=\int_0^\infty 4qt^3 e^{qt^4} \mu \{ \omega: \|u\|_{L^4(\T)} > t, \|u\|_{L^2(\T)}\le \sqrt{2R} \} \,dt +\mu\{ \|u\|_{L^2(\T)}\le \sqrt{2R} \} \\
    &\le \int_0^\infty C4qt^3 \exp\left(qt^4-ct^5 (2R)^{-3/2}\right) \,dt +\mu\{ \|u\|_{L^2(\T)}\le \sqrt{2R} \}\label{final Lq bound}
\end{align}
This concludes the proof as the above integral is finite for every $q\ge 1, R>0$.
\end{proof}
A nearly identical proof to that of Lemma \ref{Non-bourgain integrability} yields 
\begin{lem} \label{Non-bourgain density is uniformly L^q}
      For any $R>0$, the function 
    \begin{align}
        \exp\left(\left\|  \sum_{|n|\le N} \frac{g_n(\omega) e^{inx}}{\langle n\rangle}\right\|_{L^4_x}^4\right) \chi_{R}\left(\sum_{|n|\le N} \frac{|g_n(\omega)|^2}{\langle n\rangle^2} \right)
    \end{align}
    lies in $L^q(d\mu)$ for any $1\le q <\infty$ with $L^q(d\mu)$ norm bounds independent of $N$. 
\end{lem}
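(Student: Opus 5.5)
The plan is to repeat the proof of Lemma \ref{Non-bourgain integrability} with $u$ replaced by the truncated random series
\begin{align}
u_N(x,\omega)=\sum_{|n|\le N}\frac{g_n(\omega)e^{inx}}{\langle n\rangle},
\end{align}
and to check that every constant in that argument can be chosen independently of $N$.

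First I use the pointwise bound $\chi_R(\cdot)\le 1_{\{|\cdot|\le 2R\}}$, so the $L^q(d\mu)$ norm to the power $q$ is bounded by
\begin{align}
\int_{\{\|u_N\|_{L^2}\le \sqrt{2R}\}} \exp\left(q\|u_N\|_{L^4}^4\right)d\mu .
\end{align}
Here I use that $\sum_{|n|\le N}|g_n|^2\langle n\rangle^{-2}$ equals $\|u_N\|_{L^2}^2$ up to the fixed $2\pi$ normalization, which I absorb into $R$. By the layer-cake formula this is at most
\begin{align}
\int_0^\infty 4qt^3e^{qt^4}\,\mu\{\|u_N\|_{L^4}>t,\ \|u_N\|_{L^2}\le\sqrt{2R}\}\,dt+1 .
\end{align}

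Next I apply the Gagliardo--Nirenberg inequality \eqref{GN specific} to $u_N$. Its constant is absolute, so on the set $\{\|u_N\|_{L^2}\le\sqrt{2R}\}$ I get $\|\langle\nabla\rangle^{1/4}u_N\|_{L^8}\ge \|u_N\|_{L^4}^{5/2}(2R)^{-3/4}$. This reduces everything to proving a Gaussian tail bound uniform in $N$:
\begin{align}
\mu\{\|\langle\nabla\rangle^{1/4}u_N\|_{L^8}>M\}\le Ce^{-cM^2},
\end{align}
with $C,c$ independent of $N$. Once this holds, the integrand is bounded by $4qCt^3\exp(qt^4-ct^5(2R)^{-3/2})$. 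This is integrable and independent of $N$, which gives the uniform $L^q$ bound.

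The only real point is this uniform tail bound, which I prove via Proposition \ref{Chebyshev}. For $r\ge 8$, Minkowski's inequality (since $r\ge 8$) and the hypercontractivity estimate \eqref{hypercontractivity} at each fixed $x$ give
\begin{align}
\big\|\|\langle\nabla\rangle^{1/4}u_N\|_{L^8_x}\big\|_{L^r_\omega}\le \big\|\|\langle\nabla\rangle^{1/4}u_N(x)\|_{L^r_\omega}\big\|_{L^8_x}\lesssim \sqrt r\Big(\sum_{|n|\le N}\langle n\rangle^{-3/2}\Big)^{1/2}.
\end{align}
The right-hand side is bounded by $\sqrt r\big(\sum_{n\in\Z}\langle n\rangle^{-3/2}\big)^{1/2}$, which is independent of $N$. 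The range $4\le r<8$ is handled by Hölder's inequality in $\omega$. Proposition \ref{Chebyshev} then yields the tail bound with constants independent of $N$.

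This is exactly the argument of Proposition 4.4 of \cite{TV-ASENS-13} used in the untruncated case. The truncation only removes terms from a sum of positive quantities, so it cannot worsen any constant, and the uniformity in $N$ follows.
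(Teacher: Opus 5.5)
Your proposal is correct and follows the paper's route. The paper only states that the proof of Lemma \ref{Non-bourgain integrability} carries over: Gagliardo--Nirenberg \eqref{GN specific} on the mass-truncated set, a Gaussian tail bound for $\|\langle \nabla\rangle^{1/4}u\|_{L^8}$, then the layer-cake integral. You make the $N$-independence explicit, deriving the tail bound via Minkowski's inequality and \eqref{hypercontractivity} with the $N$-free constant $\sum_{n\in\mathbb{Z}}\langle n\rangle^{-3/2}$, which is what the cited argument of \cite{TV-ASENS-13} does.
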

For the defocusing case, \eqref{well-definedness of rho} follows  due to the property
$$\exp\left(-\int_\T |u|^4\,dx\right)\le 1$$
for any  $u$. The condition \eqref{Z non-zero in defocusing case} will follow from \eqref{Lemma 4.5 eq 2}.
First, we note that \eqref{Lemma 4.5 eq 2} implies that 
\begin{align}
    \mu\left\{\left\|  \sum \frac{g_n(\omega) e^{inx}}{\langle n\rangle}\right\|_{L^4_x}=+\infty, \sum \frac{|g_n(\omega)|^2}{1+n^2} \le R \right\}=0.
\end{align}
Next, we have that
\begin{align} \label{Integral inequality in proving that Z_R,- >0}
    \int \chi_{R}(E_1(u)) \exp\left(-\int_{\T} |u|^4\right) d\mu &\ge \int_{\{E_1(u)\le R\}} \exp\left(-\int_{\T} |u|^4\right) d\mu.
\end{align}
We  would like to conclude that the right hand side of \eqref{Integral inequality in proving that Z_R,- >0} is positive. We know that on  $\{\|u\|_{L^2(\T)}\le \sqrt{R}\}$, the quantity $\| u\|_{L^4(\T)}$ is $\mu$-a.s. finite. What we must show then is that that $\mu\{E_1(u)\le R\}>0$. This follows immediately from the facts that $\mu\{E_1(u)\le R\}=1-\mu\{E_1(u)> R\}$ and $\mu(L^2(\T))=1$, so that for large enough $R>0$, we have $\mu\{E_1(u)> R\}<1/2$, implying that $\mu\{E_1(u)\le R\}>1/2$. Therefore, \eqref{Z non-zero in defocusing case} holds.

We collect various properties of the measure $\rho_{R,\pm}$ and $\rho_{R, N,\pm}$ in the following Proposition:

\begin{prop} \label{Prop: Properties of weighted measures}

Fix $R>0$ and $N\in \mathbb{N}$. Then, the following bounds hold,
\begin{align} \label{Prop 4.4 eq 1}
    \sup_{N\in \mathbb{N}}\| F_{N, R, \pm} \|_{L^q(d\mu)}<\infty, \quad \| F_{ R, \pm} \|_{L^q(d\mu)}<\infty,
    \end{align}
    for any $1\le q<\infty$. In the defocusing case, $q=\infty$ may be taken. We have the following convergence, 
    \begin{align}\label{truncated measure convergence}
    \sup_{\substack{A\in\mathcal{B}(\F L^{s,p}) \\s<1-1/p}} \left|\rho_{R,N,\pm}(A)- \rho_{R, \pm }(A)\right| \overset{N\to\infty}{\longrightarrow} 0.
\end{align}
Lastly, we have: for each $(s,p)$ such that $s<1-1/p$, there exists $k, K>0$ depending on $s,p$ such that
\begin{align} \label{sub-gaussian bound on rho_N}
    \rho_{N,R,\pm } \{u\in\F L^{s,p}(\T): \|u\|_{\F L^{s,p}(\T)} >M \} \le K e^{-kM^2}
\end{align}
\end{prop}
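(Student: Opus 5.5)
The proof has three parts, treated in order: the uniform $L^q$ bounds \eqref{Prop 4.4 eq 1}, the total variation convergence \eqref{truncated measure convergence}, and the sub-Gaussian tail \eqref{sub-gaussian bound on rho_N}.

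\emph{Uniform $L^q$ bounds.} In the defocusing case $0\le F_{R,N,+},F_{R,+}\le 1$, so every $q\in[1,\infty]$ is immediate. In the focusing case, $F_{R,-}$ is exactly the function of Lemma \ref{Non-bourgain integrability} (up to the $2\pi$ normalisation and the harmless change $R\mapsto R^2$ in the cutoff). The truncated densities $F_{R,N,-}$ are those of Lemma \ref{Non-bourgain density is uniformly L^q}, which already gives $L^q$ bounds independent of $N$. The only thing to check is that the Gagliardo--Nirenberg/large-deviation bound \eqref{Lemma 4.5 eq 2} holds for $\Pi_N u$ with constants uniform in $N$. This is true because $\langle\nabla\rangle^{1/4}\Pi_N u$ is again a Gaussian series whose coefficients are dominated by those of the full series, so the hypercontractivity bound \eqref{hypercontractivity} applies uniformly.

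\emph{Convergence in total variation.} Since $\rho_{R,N}(A)-\rho_R(A)$ is controlled by $\|Z_{R,N}^{-1}F_{R,N}-Z_R^{-1}F_R\|_{L^1(d\mu)}$ uniformly in $A$, it suffices to prove $F_{R,N,\pm}\to F_{R,\pm}$ in $L^1(d\mu)$; this also gives $Z_{R,N}\to Z_R$. I would first show convergence in $\mu$-measure. For $\mu$-a.e.\ $u$ one has $\Pi_N u\to u$ in $L^2$, so $E_1(\Pi_N u)\to E_1(u)$, and by continuity $\chi_R(E_1(\Pi_Nu))\to\chi_R(E_1(u))$. For the quartic term, hypercontractivity gives $\mathbf{E}\|\Pi_{>N}u\|_{L^4}^4\lesssim (\sum_{|n|>N}\langle n\rangle^{-2})^2\to 0$, so $\Pi_N u\to u$ in $L^4$ in probability, and hence $\int|\Pi_N u|^4\to\int|u|^4$ in probability. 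Combining, $F_{R,N}\to F_R$ in measure. The uniform $L^2(d\mu)$ bound from the first part gives uniform integrability, and Vitali's theorem then yields $L^1$ convergence. Positivity of $Z_R$ (shown before the proposition in the defocusing case, and trivial in the focusing case since the density is $\ge\chi_R$) handles the normalisation. I expect this step to be the main obstacle: one must take care that the convergence of the exponential weight does not rely on boundedness of the density, which fails in the focusing case. That is exactly why the uniform integrability is routed through the $N$-uniform $L^q$ bounds rather than through dominated convergence.

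\emph{Sub-Gaussian tail.} By Cauchy--Schwarz,
\[
\rho_{N,R}(\{\|u\|_{\FL^{s,p}}>M\})\le Z_{R,N}^{-1}\|F_{R,N}\|_{L^2(d\mu)}\,\mu(\{\|u\|_{\FL^{s,p}}>M\})^{1/2}.
\]
The Gaussian tail for $\mu$ proved in the earlier lemma, together with the uniform $L^2$ bound and $\inf_N Z_{R,N}>0$ (which follows from $Z_{R,N}\to Z_R>0$, with the finitely many small $N$ each positive), gives $Ke^{-kM^2/2}$. Relabelling $k$ completes the proof.
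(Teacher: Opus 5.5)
Your proposal is correct and follows essentially the paper's route: the $L^q$ bounds come from Lemma \ref{Non-bourgain integrability} and its truncated analogue, the total-variation convergence comes from convergence of the densities with the uniform $L^2(d\mu)$ bound controlling the bad set, and the tail bound is Cauchy--Schwarz against the Gaussian tail of $\mu$. The differences are minor. You obtain convergence in measure via hypercontractivity and then invoke Vitali, whereas the paper gets a.e.\ convergence from $\mu(H^{1/4})=1$ and $H^{1/4}\hookrightarrow L^4$ and runs Egoroff by hand; your explicit treatment of $Z_{R,N}\to Z_R>0$ and $\inf_N Z_{R,N}>0$ also supplies a normalisation detail the paper leaves implicit.
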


\begin{proof}

Note that \eqref{Prop 4.4 eq 1} follows immediately from Lemma \ref{Non-bourgain integrability} and Lemma \ref{Non-bourgain density is uniformly L^q}.
We now show \eqref{truncated measure convergence}.
As in \cite{KNPSV}, we will make of use Egoroff's theorem.

Since $\mu(H^{1/4})=1$, we have that $\rho_{R, N, \pm}(H^{1/4})=\rho_{R, \pm}(H^{1/4})=1$ and hence
\begin{align}
    \sup_{\substack{A\in\mathcal{B}(\F L^{s,p}) \\s<1-1/p}}\left|\rho_{R,N,\pm}(A)- \rho_{R, \pm }(A)\right| &= \sup_{\substack{A\in\mathcal{B}(\F L^{s,p}) \\s<1-1/p}} \left|\rho_{R,N,\pm}(A\cap H^{1/4})- \rho_{R, \pm }(A\cap H^{1/4})\right|.
\end{align}
Note that 
\begin{align}
    \Pi_N u \overset{N\to\infty}{\longrightarrow} u \quad \forall u\in H^{1/4}(\T).
\end{align} 
Additionally, we have that by the Sobolev embedding theorem
\begin{align}
    \| u-\Pi_N u\|_{L^4(\T)}\lesssim \| u-\Pi_N u\|_{H^{1/4}(\T)}\to 0
\end{align}
for $u\in A \cap H^{1/4}$. Therefore, $\Pi_N u\to u$ in $L^4(\T)$ as $N\to\infty$. We also have that, by similar arguments, $\Pi_N u\to u$ in $L^2(\T)$ for $u\in A \cap H^{1/4}$. Therefore, 
\begin{align}
    F_{R,\pm}(\Pi_N u) \to F_{R, N, \pm}(u)
\end{align}
for each $u\in A \cap H^{1/4}$. Next, we apply Egoroff's theorem to $H^{1/4}$  which states that for every $\e>0$ there exists a set $\mathcal{R}_\varepsilon\subseteq H^{1/4}$ such that $\mu(\mathcal{R}_\e)<\e$ and the convergence of $F_{R, N, \pm}(\Pi_N) \to F_{R, N, \pm}(u)$ is uniform on $ H^{1/4}\setminus \mathcal{R}_\e$. Therefore, we have that, by the triangle inequality and the property $\mu(C)= \mu(C\cap \mathcal{R}_\e) + \mu(C \cap \mathcal{R}_\e^c)$,
\begin{align}
    \sup_{\substack{A\in\mathcal{B}(\FL^{s,p})}} \left|\rho_{R,N,\pm}(A\cap H^{1/4})- \rho_{R, \pm }(A\cap H^{1/4})\right|  &\le \sup_{\substack{A\in\mathcal{B}(\FL^{s,p})}} \left|\rho_{R,N,\pm}(A\cap H^{1/4}\cap  \mathcal{R}_\e)- \rho_{R, \pm }(A \cap H^{1/4}\cap \mathcal{R}_\e)\right| \label{Egoroff 1}\\
    &+\sup_{\substack{A\in\mathcal{B}(\FL^{s,p})}} \left|\rho_{R,N,\pm}(A\cap H^{1/4}\cap  \mathcal{R}_\e^c)- \rho_{R, \pm }(A \cap H^{1/4}\cap \mathcal{R}_\e^c)\right|.
\end{align}
We estimate the first term  on the right hand side of \eqref{Egoroff 1} as follows, by Hölder,
\begin{align}
    \left|\rho_{R,N,\pm}(A\cap H^{1/4}\cap  \mathcal{R}_\e)- \rho_{R, \pm }(A \cap H^{1/4}\cap \mathcal{R}_\e)\right| &\le \sup_{N\in \mathbb{N}} \rho_{R,N,\pm}(R_\e) + \rho_{R,\pm}(R_\e) \\
    &\le \mu(R_\e)^{1/2} \sup_{N\in \mathbb{N}} \|F_{R, \pm}(\Pi_N u)\|_{L^2(d\mu)} + \mu(R_\e)^{1/2} \|F_{R, \pm}( u)\|_{L^2(d\mu)} \\
    &\le C \e^{1/2},
\end{align}
where we used \eqref{Prop 4.4 eq 1},
while the second term may be made smaller than $\e$ if $N\ge N_\e$ is taken large enough (it may also be zero if $A\cap \mathcal{R}_\e^c $ is empty) due to the uniform convergence $F_{R, \pm}(\Pi_N) \to F_{R,  \pm}(u)$.  Hence, 
\begin{align}
    \sup_{\substack{A\in\mathcal{B}(\FL^{s,p})}} \left|\rho_{R,N,\pm}(A \cap H^{1/4})- \rho_{R, \pm }(A \cap H^{1/4})\right| \le C\sqrt{\e} + \e 
\end{align}
for all  $N\ge N_\e$ and we have shown \eqref{truncated measure convergence}.

From the above bound on $F$, we also have that by the Cauchy-Schwarz inequality
\begin{align}
    &\rho_{ N,R,\pm } \{u\in\F L^{s,p}(\T): \|u\|_{\F L^{s,p}(\T)} >M \}\\
    &=\int_{ \{u\in\F L^{s,p}(\T): \|u\|_{\F L^{s,p}(\T)} >M \}} Z_{R,N, \pm}^{-1}\, \chi_{R}(E_1(\Pi_N u)) \exp\left(\mp \int_{\T} |u|^4\right) d\mu \\
     &\le \mu_{1} \{u\in\F L^{s,p}(\T): \|\Pi_Nu\|_{\F L^{s,p}(\T)} >M \}^{1/2} \left\| Z_{R,N, \pm}^{-1}\, \chi_{R}(E_1(\Pi_Nu)) \exp\left(\mp \int_{\T} |\Pi_Nu|^4\right) \right\|_{L^2(d\mu)} \\
    &\le  K e^{-k M^2}
\end{align}
with constants $K, k$ independent of $N$. 
\end{proof}\par 
The following proposition is useful in the argument given in Section \ref{Section: Proof of Main Theorem}. Its proof is inspired by the similar proposition given in \cite{KNPSV}. 
\begin{prop} \label{Prop for complements of Borel Sets}
    Let $A_j\in \F L^{s,p}$ be Borel subsets with $s<1-1/p$ such that $\rho_{j, \pm}(\F L^{s,p} \setminus A_j)=0$ for every $j\in \mathbb{N}$. Then necessarily, $\mu(\F L^{s,p} \setminus \bigcup_{j\in \mathbb{N}} A_j)=0$.
\end{prop}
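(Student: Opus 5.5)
The plan is to exploit that the densities $F_{j,\pm}$ are strictly positive on the mass shell $\{E_1(u)<j\}$ and that these shells exhaust the support of $\mu$. Set $B:=\F L^{s,p}\setminus \bigcup_{j} A_j$, which is Borel, and note $B\subseteq \F L^{s,p}\setminus A_j$ for every $j$. The hypothesis then gives
\begin{align}
\rho_{j,\pm}(B)=Z_{j,\pm}^{-1}\int_B \chi_j(E_1(u))\exp\Big(\mp\int_\T |u|^4\,dx\Big)\,d\mu = 0 \quad \text{for every } j .
\end{align}
Here $Z_{j,\pm}$ is finite and nonzero: finiteness comes from Lemma \ref{Non-bourgain integrability} in the focusing case, and the defocusing case is trivial; positivity follows from the argument after Lemma \ref{Non-bourgain density is uniformly L^q}, at least for $j$ large, and for every $j>0$ by the same reasoning since $\mu\{E_1(u)\le j\}>0$. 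If some small $j$ caused trouble, I would simply discard finitely many indices, which does not affect the conclusion because we only need infinitely many $j$.

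Since the integrand is nonnegative and vanishes on a full-measure subset of $B$, we get $\chi_j(E_1(u))\exp(\mp\|u\|_{L^4}^4)=0$ for $\mu$-a.e. $u\in B$. Now restrict to $B_j:=B\cap\{E_1(u)\le j\}\cap\{\|u\|_{L^4}<\infty\}$. On this set $\chi_j(E_1(u))=1$, because $\chi=1$ on $[0,1)$; to avoid worrying about the boundary I will use $E_1(u)<j$, or equivalently shrink to $\{E_1\le j/2\}$. The exponential is also strictly positive there. Hence $\mu(B_j)=0$.

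To finish, use \eqref{Lemma 4.5 eq 2}: $\|u\|_{L^4}<\infty$ holds $\mu$-a.e. on $\{E_1(u)\le j\}$. (Alternatively, use $\mu(H^{1/4})=1$ together with Sobolev embedding.) This gives $\mu(B\cap\{E_1(u)<j\})=0$ for each $j$. Since $\mu(L^2(\T))=1$, the sets $\{E_1(u)<j\}$ increase to a $\mu$-full set, so countable subadditivity yields $\mu(B)=0$.

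The main point needing care is that the focusing density is unbounded. Only its \emph{positivity} and integrability matter here, and integrability is supplied by Lemma \ref{Non-bourgain integrability}, so no boundedness is ever needed. The same argument works verbatim for either sign.
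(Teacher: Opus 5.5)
Your proof is correct and follows the paper's argument: positivity of $F_{j,\pm}$ on the mass shells forces $\mu$ to vanish on $B$ intersected with each shell, and these shells exhaust a $\mu$-full set because $\mu(L^2)=1$. Your explicit use of $\|u\|_{L^4}<\infty$ $\mu$-a.e. to ensure the weight $\exp(-\int |u|^4)$ is strictly positive fills in a point the paper leaves implicit when it asserts $F_{j,\pm}>0$ on $\Omega_j$.
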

\begin{proof}
    Note that since $\mu(L^2)=1$, we may replace $\F L^{s,p}$ by $L^2 \cap \F L^{s,p}$. We introduce the sets
    \begin{align}
        \Omega_j = \{u\in L^2 \cap \F L^{s,p} : \chi(E_1(u)/j)>0 \}.
    \end{align}
    Then, we claim that for each $j\ge 1$. 
    \begin{align}
        \mu(L^2 \cap \F L^{s,p} \setminus A_j \cap \Omega_j)=0
    \end{align}
   To see why this holds, note that by our hypothesis,
   \begin{align}
       0= \int_{L^2 \cap \F L^{s,p} \setminus A_j} F_{j, \pm} d\mu = \int_{(L^2 \cap \F L^{s,p} \setminus A_j) \cap \Omega_j} F_{j, \pm} d\mu,
   \end{align}
   and the claim follows since $F_{j, \pm}>0$ on $\Omega_j$. 
    
    Next, we will show that
    \begin{align} \label{limit of measure Section 3}
        \mu_{1,\pm}(\Omega_j)\to 1 \text{ as } j\to\infty.
    \end{align}
To see why, we note that for $u\in L^2 \cap \F L^{s,p}$ then $E_1(u)<\infty$ by definition, so there exists $l$ such that if $j\ge l$, then $u\in \Omega_j$. This implies that $1_{\Omega_j}(u)\to 1$ as $j\to\infty$ for each $u\in L^2 \cap \F L^{s,p}$ and \eqref{limit of measure Section 3} follows by the Dominated Convergence  
  Theorem since $\mu_{1, \pm}$ are probability measures. Next, we deduce that for every $l\in \mathbb{N}$, 
\begin{align}
    \mu_{1, \pm} \Biggl( \Bigl( \mathcal{F}L^{s,p} \cap L^2 
   \setminus \bigcup_{j \in \mathbb{N}} A_j \Bigr) 
   \cap \Omega_l \Biggr) 
&= \mu_{1, \pm} \Biggl( \bigcap_{j \in \mathbb{N}} 
   \Bigl( \mathcal{F}L^{s,p} \cap L^2 \setminus A_j \Bigr) 
   \cap \Omega_l \Biggr) \\
&\leq \mu_{1, \pm} \Biggl( 
   \Bigl( \mathcal{F}L^{s,p} \cap L^2 \setminus A_j \Bigr) 
   \cap \Omega_l\Biggr) \\
&= 0
\end{align}
 We complete the proof by taking the limit $l\to\infty$ in the above inequality and using \eqref{limit of measure Section 3}. 
\end{proof}

\section{Almost-Invariance of \texorpdfstring{$\rho_{N, R, \pm}$}{truncated measure} along the flow \texorpdfstring{$\Phi_N(t)$}{PhiN(t)}} \label{Section almost invariance}

 In this section, we prove that the weighted measures $\rho_{R,N,\pm}$ are almost-invariant under the flow $\Phi_N(t)$. We adapt the methodology of \cite{KNPSV}, which is in part itself inspired by \cite{TV-ASENS-13, TV-IMRN-13, TV-JMPA-15}, to our different weighted measure. We utilize the bounds proved in Section 3.

\begin{prop}\label{almost invariance prop}
    Let $R,T>0$ be given. Then, 
    \begin{align}
         \sup_{\substack{t\in[0,T] \\ A\in \mathcal{B}(\F L^{s,p}), s<1-1/p}} \big|\rho_{R,N, \pm} (\Phi_N(t)A) - \rho_{R,N, \pm} (A)\big| \overset{N\to\infty}{\longrightarrow} 0
    \end{align}
\end{prop}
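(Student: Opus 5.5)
The plan is to exploit the finite-dimensional structure of the truncated flow. I would write $u=\Pi_N u+\Pi_{>N}u$, so that $\mu=\mu_N\otimes\mu_N^\perp$, where $\mu_N$ is the Gaussian measure on $E_N=\Pi_N\FL^{s,p}\cong\C^{2N+1}$ and $\mu_N^\perp$ is its complement. Under $\Phi_N(t)$ the high modes evolve by the free linear flow $S(t)$. That flow acts on each Fourier coefficient by a phase, so it leaves $\mu_N^\perp$ invariant. The low modes evolve by the Hamiltonian ODE system of $\Pi_N$ applied to \eqref{mKdV2 truncated}. This system is Hamiltonian with respect to $\omega_{L^2}$, so by Liouville's theorem it preserves Lebesgue measure on $E_N$.

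The weight also only involves $\Pi_N u$. Write $d\mu_N=c_N e^{-\sum_{|n|\le N}\langle n\rangle^2|\hat u(n)|^2}dL$. Then $d\rho_{R,N,\pm}=Z^{-1}\chi_R(E_1(\Pi_Nu))\exp\big(-\mathcal{H}_N(\Pi_N u)\big)\,dL\otimes d\mu_N^\perp$, with
\begin{align}
\mathcal{H}_N(v)=\|v\|_{H^1}^2\pm\int_\T|v|^4 = E_3(v)+E_1(v)\ \text{(up to normalization)}.
\end{align}

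The next step is to examine which quantities the truncated flow conserves. We already know that $E_1(\Pi_N u)$ is conserved by the truncated flow, which was checked directly in Section~3. The key question is whether the truncated energy $E_3(\Pi_N u)$ is also conserved. For the NLS Hamiltonian under the mKdV2 flow, with Dirichlet projection applied to the nonlinearity, one should compute
\begin{align}
\frac{d}{dt}E_3(\Pi_N u_N)=\omega_{L^2}\big(\nabla E_3(\Pi_Nu_N),\,\Pi_N\N(\Pi_Nu_N)\big),
\end{align}
which is a sum of multilinear expressions in $\Pi_N u$. The cubic-times-derivative contributions cancel exactly because $\Pi_N$ is self-adjoint and the truncated nonlinearity is itself Hamiltonian (it is the symplectic gradient of $E_4(\Pi_N u)$, plus mass and momentum corrections). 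The quartic-energy contribution does not cancel; it leaves the remainder $\int|\Pi_N u|^2\Pi_N u\,\overline{\Pi_{>N}(|\Pi_Nu|^2\partial\Pi_Nu)}$. I expect this to be the source of almost (rather than exact) invariance. I would therefore write $\frac{d}{dt}\mathcal{H}_N(\Pi_N\Phi_N(t)u)=G_N(\Pi_N\Phi_N(t)u)$, with $G_N$ containing at least one high-frequency projection $\Pi_{>N}$.

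With this in hand, the change of variables (Liouville together with invariance of $\mu_N^\perp$) gives
\begin{align}
\rho_{R,N,\pm}(\Phi_N(t)A)=Z^{-1}\int_A\chi_R(E_1(\Pi_Nu))e^{-\mathcal{H}_N(\Pi_N\Phi_N(t)u)}\,dL\,d\mu_N^\perp .
\end{align}
Differentiating in $t$ and using the group property yields
\begin{align}
\Big|\frac{d}{dt}\rho_{R,N,\pm}(\Phi_N(t)A)\Big|\le\int |G_N(\Pi_N u)|\,d\rho_{R,N,\pm}\le \|G_N(\Pi_Nu)\|_{L^2(d\mu)}\,\sup_N\|F_{R,N,\pm}\|_{L^2(d\mu)} .
\end{align}
Here I use Proposition~\ref{Prop: Properties of weighted measures} for the density and Cauchy--Schwarz. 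Integrating over $[0,T]$ then gives a bound that is uniform in $A$ and $t$.

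The main obstacle is showing that $\|G_N(\Pi_Nu)\|_{L^2(d\mu)}\to0$. I would first expand $G_N$ on the Fourier side as a sextic Gaussian multilinear form with coefficients supported where some combination of frequencies exceeds $N$. I would then apply Proposition~\ref{large deviations} and handle the pairings case by case. The derivative costs a factor $|n|$, while each input contributes $\langle n_j\rangle^{-1}$, and the constraint that the output frequency exceeds $N$ forces at least one input frequency $\gtrsim N$. The hope is that the sums give a decay $N^{-\epsilon}$, using the resonance structure (the renormalizing terms $M$ and $P$ remove exactly the worst paired terms, analogous to Wick ordering). If pairings force a logarithmic divergence, I would fall back on exploiting the antisymmetry of the Hamiltonian contraction to symmetrize the coefficients before counting.
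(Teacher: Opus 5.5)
Your proposal follows the paper's proof essentially step for step. The shared steps are: the splitting $d\mu=\gamma_N e^{-\|\Pi_N u\|_{L^2}^2-\|\Pi_N\partial u\|_{L^2}^2}\,d\mathcal{L}_N\otimes d\mu_{1,N}^{\perp}$, Liouville together with conservation of $E_1(\Pi_N u)$, and reduction of the time derivative to the single remainder $E_{3,N}^*=-24\re\int \Pi_{>N}(|\Pi_N u|^2\partial\Pi_N u)\,\overline{\Pi_N u}\,|\Pi_N u|^2$ (the paper gets the other cancellations from conservation of $E_3$ along the smooth mKdV2 flow plus orthogonality). Then come Cauchy--Schwarz against the uniformly $L^2(d\mu)$ density and a pairing-by-pairing large-deviation bound for this sextic form, which is Proposition~\ref{Probabilistic estimate}.

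One correction to your heuristic for that last step: $M$ and $P$ play no role there, since $\Pi_{>N}$ annihilates $M(\Pi_N u)\partial\Pi_N u$ and $P(\Pi_N u)\Pi_N u$, as your own formula for the remainder already shows. The delicate pairings $(j_3,j_4)$ and $(j_3,j_6)$ are handled in the paper by exactly the symmetrization you list as a fallback: the contributions of $(j_1,j_2,j,j,j_5,j_6)$ and $(j_6,j_5,j,j,j_2,j_1)$ are complex conjugates, so their imaginary parts cancel. Alternatively, the paper bounds them directly by $\log^2 N/N$.
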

We recall the definition of the  energies 
\begin{align}
    E_1(u)=\int |u|^2, \quad E_3(u)=\int |\partial u|^2 \pm |u|^4
\end{align}
and the measure
\begin{align}
    d\rho_{R,N, \pm} =\chi_{R}(E_1(\Pi_N)u)) \exp\left(\mp\int |\Pi_N u|^4\right) d\mu,
\end{align}
where $\chi_R\in C_c^\infty([0,\infty))$ is supported in $B(0,2R)$ and equals $1$ on $B(0,R)$ and $\mu$ is the law of the random vector
\begin{align} \label{random vector}
    \frac{1}{\sqrt{2\pi}} \sum_{j\in \Z}
 g_j(\omega) \frac{e^{ijx}}{\langle j\rangle},
\end{align}
where $g_j$ is a sequence of normalized, centered i.i.d complex Gaussian random  variables. The measure $\mu$ is supported on $\F L^{s,p}(\T)$ for all $s<1-1/p$.  \par Let us introduce the functional
\begin{align}
    E_{3, N}^{*}:= \left(\frac{d}{dt} E_3(\Pi_N \Phi_N(t)u)\right)_{t=0}.
\end{align}

In order to prove Proposition \ref{almost invariance prop}, we shall need the following result
\begin{prop}\label{Probabilistic estimate}
For every $R>0$ 
\begin{align}
    \norm{\chi_{R}(E_1(\Pi_N u))^{1/2} \times E_{3, N}^{*}}_{L^2(d\mu)} \overset{N\to\infty}{\longrightarrow} 0 
\end{align}
\end{prop}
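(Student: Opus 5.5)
The plan is to compute $E_{3,N}^{*}$ explicitly, observe that it is a sextic multilinear Gaussian expression localized to output frequencies $|n|>N$, and then estimate its $L^2(d\mu)$ norm through Proposition \ref{large deviations}.

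\textbf{Computing $E_{3,N}^{*}$.} Set $v=\Pi_N u$. Along \eqref{mKdV2 truncated} we have $\partial_t v=-\partial_x^3 v+6\Pi_N\mathcal N(v,\bar v,v)$. Since $E_3$ is conserved by the full flow \eqref{mKdV 2Cauchy}, we have $dE_3(v)[-\partial_x^3 v+6\mathcal N(v)]=0$. Moreover, $M(v)$ and $P(v)$ are conserved and the terms $M\partial_x v$ and $iPv$ act diagonally in frequency, so they commute with $\Pi_N$. It follows that
\begin{align}
E_{3,N}^{*}=-6\,dE_3(v)\big[\Pi_{>N}\mathcal N(v,\bar v,v)\big].
\end{align}
The quadratic part of $dE_3(v)$ is $-2\re\int \partial_x^2 v\,\bar h$, and $\partial_x^2 v$ is supported on frequencies $|n|\le N$, so it pairs to zero with $h=\Pi_{>N}\mathcal N$. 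Only the quartic part survives:
\begin{align}
E_{3,N}^{*}=c_\pm\,\re\int_\T \Pi_{>N}\big(|v|^2v\big)\,\overline{\Pi_{>N}\big(|v|^2\partial_x v\big)}\,dx .
\end{align}
In Fourier variables this equals
\begin{align}
c\,\re\sum_{|n|>N}\ \sum_{\substack{n=n_1-n_2+n_3\\ n=m_1-m_2+m_3}} (-i m_1)\prod_{j}\frac{g_{n_j}^{\pm}\,\overline{g_{m_j}^{\pm}}}{\langle n_j\rangle\langle m_j\rangle},
\end{align}
with all $|n_j|,|m_j|\le N$.

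\textbf{Removing the cutoff and expanding.} Since $0\le\chi_R^{1/2}\le 1$, I drop the cutoff entirely. It then suffices to show $\|E_{3,N}^{*}\|_{L^2(d\mu)}\to0$, which is a statement purely about a degree-six Gaussian chaos. I apply Proposition \ref{large deviations} with coefficients
\begin{align}
a_{\vec k}=m_1\,\mathbf 1_{|n|>N}\prod_j\langle n_j\rangle^{-1}\langle m_j\rangle^{-1},
\end{align}
and sum over all $r$-pairings, $r=0,\dots,3$.

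\textbf{Counting estimate (main obstacle).} The key structural fact is that $|n|>N$ while $|n_j|\le N$, which forces $\max_j|n_j|>N/3$; likewise $\max_j|m_j|>N/3$. This gains a factor $\langle N\rangle^{-1}$ from each side, and that gain must beat the derivative $|m_1|\le N$ together with the logarithmic-type divergences of sums like $\sum_{n}\langle n\rangle^{-2}$ restricted to the linear constraints. I expect this counting to be the hardest step, especially in the fully paired ($r=3$) and $r=2$ cases, where the pairing can cancel the high frequencies against each other (for instance $m_1=n_1$ both of size $\sim N$). I would proceed case by case on which indices pair.
\begin{itemize}
\item When the pairing ties a high frequency $m_1$ to a high frequency $n_j$, I would use $|m_1|\langle m_1\rangle^{-2}\lesssim \langle m_1\rangle^{-1}$ summed over $|m_1|\sim N$ and $|n|>N$. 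The constraint $n=n_1-n_2+n_3$ should leave a summable tail giving $O(N^{-1+})$.
\item When the high frequency sits on $m_2$ or $m_3$ rather than on the derivatized $m_1$, the derivative factor is harmless. If needed, I would first symmetrize using $m_1\mapsto$ the $|v|^2\partial_x v$ structure, writing $\re\int\Pi_{>N}(|v|^2v)\overline{\Pi_{>N}\partial_x(|v|^2v)}$ modulo terms in which the derivative falls on a conjugate or low-frequency factor.
\end{itemize}
In each case the aim is a bound $\lesssim N^{-\gamma}$ for some $\gamma>0$, obtained via elementary sums of the form $\sum_{a+b=c}\langle a\rangle^{-2}\langle b\rangle^{-2}\lesssim\langle c\rangle^{-2}$. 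Summing over the finitely many pairing patterns then gives the claim of Proposition \ref{Probabilistic estimate}.
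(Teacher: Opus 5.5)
Your proposal is correct and follows essentially the paper's route. You use the same reduction (conservation of $E_3$ plus orthogonality of $\Pi_{>N}$ against frequencies $\le N$) to $E_{3,N}^{*}=c\,\re\int \Pi_{>N}\big(|\Pi_N u|^2\partial_x \Pi_N u\big)\,\overline{\Pi_N u}\,|\Pi_N u|^2$, then Proposition \ref{large deviations} with a case analysis on pairings driven by $|\P(\j)|>N$. The delicate terms are the $1$-pairings tying the differentiated index to a conjugate index of the other triplet, which both you (first bullet) and the paper handle via $\sum_{|j|\le N}\langle j\rangle^{-1}\langle N-j\rangle^{-1}\lesssim N^{-1}\log N$.

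The only difference is how $2$- and $3$-pairings are treated. The paper discards them at once: $\L(\j)=0$ upgrades a $2$-pairing to a $3$-pairing, whose term is real and drops out because $E_{3,N}^{*}$ is an imaginary part. You would instead count them, which also works, since only $\{n_1,n_3\}=\{m_1,m_3\}$, $n_2=m_2$ survive and they contribute $O(N^{-1}\log^2 N)$.
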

We first show how Proposition \ref{Probabilistic estimate} implies Proposition \ref{almost invariance prop}.\newline
\textit{Prop. \ref{Probabilistic estimate} $\implies$ Prop. \ref{almost invariance prop}}:
We shall prove for any $R>0$ the following estimate
\begin{align}\label{d/dt rho}
    \sup_{\substack{t\in \R \\ A\in \mathcal{B}(\F L^{s,p}), s<1-1/p}}\bigg| \frac{d}{dt} \rho_{R,N, \pm} (\Phi_N(t)A) \bigg| \overset{N\to\infty}{\longrightarrow} 0.
\end{align}
Hence, once we establish \eqref{d/dt rho}, then Proposition \ref{almost invariance prop} will follow by fundamental theorem of calculus.\par
We will now use the splitting
\begin{align}
    d\mu= \gamma_N \exp\left(-\norm{\Pi_N u}_{L^2}^2 -\norm{\Pi_N \partial u}_{L^2}^2\right) d\mathcal{L}_N \otimes d\mu_{1,N}^{\perp},
\end{align}
where $\mathcal{L}=\prod_{j=-N}^Nu_j $ is the classical Lebesgue measure on $\C^{2N+1}$, $\gamma_N$ is a normalization constant such that $\gamma_N \exp\left(-\norm{\Pi_N u}_{L^2}^2 -\norm{\Pi_N \partial u}_{L^2}^2\right)$ is a probability measure on $\C^{2N+1}$ and $\mu^\perp$ is the pushforward of the high frequency projection of the random vector \eqref{random vector}, 
\begin{align}
    \frac{1}{\sqrt{2\pi}} \sum_{j\in \Z_{>N}}
 g_j(\omega) \frac{e^{ijx}}{\langle j\rangle}. 
\end{align}
Let us now compute, with $R_3(u):=\int |u|^4$, 
\begin{align}
    \frac{1}{\gamma_N} \frac{d}{dt}\left( \rho_{N,R, \pm} (\Phi_N(t)A\right)_{t=0} &= \frac{d}{dt}\left( \int_{\Phi_N(t)A} \chi_{R}(E_1(u)) \exp\left(-E_3(u)\right) d\mathcal{L}_N \otimes d\mu_{1,N}^{\perp}\right)_{t=0} \\
    &=\frac{d}{dt}\left( \int_{A} \chi_{R}(E_1(\Phi_N(t)u)) \exp\left(-E_3(\Phi_N(t)u)\right) d\mathcal{L}_N \otimes d\mu_{1,N}^{\perp}\right)_{t=0} \\
    &= - \int_{A} \chi_{R}(E_1(\Pi_N u)) E_{3,N}^* \exp\left(-E_3(\Pi_N u)\right) d\mathcal{L}_N \otimes d\mu_{1,N}^{\perp} \\
    &= - \gamma_N^{-1} \int_{A} \chi_{R}(E_1(\Pi_N u)) E_{3,N}^* \exp\left(\mp R_3(\Pi_N u)\right) d\mu
\end{align}
where we have used the change of variable $A\mapsto \Phi_N(t)A$, Liouville's theorem (as the flow $\Pi_N \Phi_N(t)$ is Hamiltonian) guaranteeing the invariance of the measure under this transformation, and the fact that $E_1(\Phi_N(t)u)$ is conserved. We further estimate, by Cauchy-Schwarz 
\begin{align}
    &\sup_{A\in \mathcal{B}(\F L^{s,p}), s<1-1/p} \abs{\int_{A} \chi_{R}(E_1(\Pi_N u)) E_{3,N}^* \exp\left(\mp R _3(\Pi_N u)\right) d\mu} \\
    &\le  \left\| \chi_{R}(E_1(\Pi_N u))^{1/2} \exp\left(\mp \int |\Pi_N u|^4\right)\right\|_{L^2(d\mu)}\|\chi_{R}(E_1(\Pi_N u))^{1/2} E_{3,N}^*\|_{L^2(\mu)} \\
    &\le  C(R) \norm{ \chi_{R}(E_1(\Pi_N u))^{1/2} E_{3,N}^*}_{L^2(\mu)}\overset{N\to\infty}{\longrightarrow} 0
\end{align}
 
by Lemma \ref{Non-bourgain density is uniformly L^q} for $q=2$ and Proposition \ref{Probabilistic estimate}.  Therefore, we have shown that
\begin{align}
    \sup_{ A\in \mathcal{B}(\FL^{s,p}), s<1-1/p}\bigg| \frac{d}{dt}\left( \rho_{R,N, \pm} (\Phi_N(t)A)\right)_{t=0} \bigg| \overset{N\to\infty}{\longrightarrow} 0
\end{align}
Now, using the fact that $\Phi_N(t)$ is a flow, 
\begin{align}
    \frac{d}{dt}\left( \rho_{R,N, \pm} (\Phi_N(t)A)\right)_{t=t_0} = \frac{d}{dt}\left(\int_{\Phi_N(t)(\Phi_N(t_0)A)} d\rho_{N,R, \pm}\right)_{t=0}
\end{align}
and Proposition \ref{almost invariance prop} follows provided we switch to the Borel set $\Phi_N(t_0)A$, which is admissible as our estimate is uniform in $A$.

\subsection{Proof of Prop. 1.2}

In this subsection, we follow the methodology used in Section 4 of \cite{KNPSV}.
From the expression for $E_3(u)$, we have that
\begin{align}
    E_{3,N}^* = 2\operatorname{Re}\int \left(\partial \partial_t (\Pi_N u_N) \pm   2\partial_t (\Pi_N u_N )\overline{ \Pi_N u_N} |\Pi_N u_N|^2\right)_{t=0},
\end{align}
where we recall $u_N(t)=\Phi_N(t)u$.
Note that $\Pi_N u_N(t)$ solves the equation 
\begin{align}\label{equation for u_N}
    \partial_t  \Pi_N u_N + \partial_x^3  \Pi_N u_N &= \pm 6 \Pi_N \mathcal{N}( \Pi_N u_N, \overline{ \Pi_N u_N}, \Pi_N u_N) \\
    &=    \pm 6  \mathcal{N}( \Pi_N  u_N, \overline{ \Pi_N u_N}, u_N) \mp 6 \Pi_{>N} \mathcal{N}( \Pi_N u_N, \overline{\Pi_N  u_N}, \Pi_N u_N) \\
    &= \mp 6  \mathcal{N}( \Pi_N u_N, \overline{\Pi_N u_N}_, \Pi_N u_N) \mp  6 \Pi_{>N} (|\Pi_N u_N|^2 \partial_x \Pi_N u_N),
\end{align}
where $\mathcal{N}$ is defined in \eqref{Nonlinearity of mKdV2 physical space} and we used that
\begin{align}
    \Pi_{>N} \mathcal{N}( \Pi_N u_N, \overline{\Pi_N  u_N}, \Pi_N u_N) &=\Pi_{>N}  \left(|\Pi_N u_N|^2 \partial_x \Pi_N u-M( \Pi_N u_N )\partial_x \Pi_N  u_N -iP(\Pi_N u_N)\Pi_N u_N\right) \\
    &= \Pi_{>N} \left(|\Pi_N u_N|^2 \partial_x \Pi_N u_N\right)
\end{align}
since by orthogonality and the fact that $M(\Pi_Nu_N), P( \Pi_N u_N )$ are scalars,
\begin{align}
    M( \Pi_N u_N )\Pi_{>N} \partial_x \Pi_N u_N & =0 \\
     P( \Pi_N u_N )\Pi_{>N} \Pi_N u_N &=0.
\end{align}

Recall that for smooth solutions (in particular for $u_N(t)$), the flows of the mKdV2 \eqref{mKdV 2Cauchy} and the mKdV \eqref{mKdV} are related by two gauge transformations. Since these gauge transformations leave $E_3(u)$ unchanged and $E_3(u)$ is conserved along the flow of the mKdV \eqref{mKdV}, we have that $E_3(u)$ is conserved along the flow of the mKdV2 \eqref{mKdV 2Cauchy}. Therefore we can replace $\partial_t \Pi_N u_N$ by the additional forcing term on the right-hand side of \eqref{equation for u_N}, obtaining 
\begin{align}
    E_{3,N}^* &=  12\operatorname{Re } \int\left[ \mp \partial  \left(\Pi_{>N} (|\Pi_N u_N|^2 \partial_x \Pi_N u_N)\right)) \partial (\overline{\Pi_N u_N}) - 2 \Pi_{>N} (|\Pi_N  u_N|^2 \partial_x \Pi_N  u_N) \overline{\Pi_N u_N}|\Pi_N u_N|^2\right]_{t=0}  \\
    &= - 24 \re \int \left[\Pi_{>N} (|\Pi_N u_N|^2 \partial_x \Pi_N u_N) \overline{\Pi_N u_N } |\Pi_N u_N|^2\right]_{t=0}, 
\end{align}
using an orthogonality argument in the last step:
\begin{align}
    \int \Pi_{>N} f\, \Pi_N g=0
\end{align}
for appropriate $f,g$. Recalling the definition of $u_N$, we have that
\begin{align} \label{E_(3,N)}
    E_{3,N}^*= - 24 \re \int\Pi_{>N} (|\Pi_N u |^2 \partial \Pi_N u  ) \Pi_N \overline{u} |\Pi_N u|^2
\end{align}
We now replace the function $u$ in \eqref{E_(3,N)} by the random vector \eqref{random vector}. We shall show that
\begin{align}\label{Probabilistic estimate 2.0}
    \norm{\im \sum_{\substack {j_1, j_2, j_3, j_4, j_4, j_6\in \Z_{\le N} \\
    j_1-j_2+j_3-j_4+j_5-j_6=0 \\
    |j_1-j_2+j_3|>N
    }} \frac{j_3}{\langle j_1\rangle \langle j_2\rangle 
    \langle j_3\rangle
    \langle j_4\rangle
    \langle j_5\rangle
    \langle j_6\rangle}g_{\vec{j}}(\omega)}_{L^2_\omega} \overset{N\to\infty}{\longrightarrow} 0,
\end{align}
where 
\begin{align}
    \Z_{\le N}= \Z\cap [-N,N], \quad g_{\vec{j}}(\omega) = g_{j_1}(\omega) \overline{g}_{j_2}(\omega) g_{j_3}(\omega)\overline{g}_{j_4}(\omega) g_{j_5}(\omega)\overline{g}_{j_6}(\omega).
\end{align}
Note that we pass from taking the real part in \eqref{E_(3,N)} to the imaginary part in \eqref{Probabilistic estimate 2.0} since taking the Fourier transform of a derivative brings down a factor of $-i$. The imaginary part is not crucial to the argument, but gives an additional way to show that one of the contributions to \eqref{Probabilistic estimate 2.0} vanishes (see Lemma \ref{tilde}) so we retain it.  We introduce some notation to streamline the presentation:
\begin{align}
    \I_N=\{\j\in \Z_{\le N}: \L(\j)=0, |\P(\j)|> N\}
\end{align}
where
\begin{align}
    \mathcal{L}(\j) = j_1-j_2+j_3-j_4+j_5-j_6, \quad \P(\j) = j_1-j_2+j_3.
\end{align}
Next we split $\I_N$ into a number of subsets that will allow us to effectively apply Proposition 1.1. We use Definition 3.6 for $n=6$, $i_1=i_3=i_5=1$ and $i_2=i_4=i_6$ and we partition the set of indices according to whether we have a 1, 2, or 3-pairing.
\begin{rmk}
    If we have $\j\in \I_N$ with a 2 or 3-pairing then its contribution to \eqref{Probabilistic estimate 2.0} vanishes. Indeed, if we have a 2-pairing, then by the condition $\L(\j)=0$, we must have a 3-pairing, in which case the associated Gaussian $g_{\j}(\omega)$ is real-valued, and thus its imaginary part is zero. We therefore restrict our discussion to the 0 or 1-pairings.
\end{rmk}
Now, we consider the following sets:
\begin{align}
    \I^0_N = \{\j\in \I_n: \j \text{ 0-pairing }\}
\end{align}
and for any $k\in\{1,3,5\}$ and $l\in\{2,4,6\}$,
\begin{align}
    \I_N^{j_k, j_l}=\{\j\in \I_N: (j_k,j_l) \text{ 1-pairing }\}.
\end{align}
We also define the splitting
\begin{align}
    \I_N^{j_k, j_l} = \tilde{\I}_N^{j_k, j_l} \cup \hat{\I}_N^{j_k, j_l},
\end{align}
where
\begin{align}
\tilde{\I}_N^{j_k, j_l} = \{\j\in \I_N: (k,l) \text{ is a 1-pairing and } \#\{j_1, j_2, j_3, j_4, j_5, j_6\}=5\}, \quad \hat{\I}_N^{j_k, j_l} = \I_N^{j_k, j_l} \setminus \tilde{\I}_N^{j_k, j_l}.
\end{align}
We introduce the definition
\begin{align}
    a(\j) = \frac{j_3}{\langle j_1\rangle \langle j_2\rangle 
    \langle j_3\rangle
    \langle j_4\rangle
    \langle j_5\rangle
    \langle j_6\rangle}.
\end{align} 
\par 
We treat the $0$-pairing case in Lemma \ref{0-pairing}. We split the cases of 1-pairing in two sub-cases: either the pairing happens between indices that lie in the same triplet $\{1,2,3\}$ or $\{4,5,6\}$, or they lie in different triplets. All the pairings of the first type are treated in Lemma \ref{same triplet lemma} and the pairings of the second type are treated in Lemma \ref{different triplet lemma} except the pairings $\I^{j_3, j_4}_N$ and $\I^{j_3, j_6}_N$. One can check that these pairings give divergent contributions if we apply the the proofs of Lemma \ref{different triplet lemma} to them. For this reason, we split
\begin{align}
    \I^{j_3, j_4}_N = \tilde{\I}^{j_3, j_4}_N \cup {\hat\I}^{j_3, j_4}_N, \quad \I^{j_3, j_4}_N = \tilde{\I}^{j_3, j_6}_N \cup {\hat\I}^{j_3, j_6}_N\quad 
\end{align}
and treat the contributions of $\tilde{\I}^{j_3, j_4}_N, \tilde{\I}^{j_3, j_6}_N$ in Lemma \ref{tilde} and the contributions of $\tilde{\I}^{j_3, j_4}_N, \tilde{\I}^{j_3, j_6}_N$ in Lemma \ref{hat}.\par
We first treat the case of $0$-pairing.
\begin{lem}\label{0-pairing}
The following holds:
\begin{align}
    \im \sum_{\j\in \Z_{\le N}} a(\j) \mathbf{1}_{\tilde{\I}^0_N} g_\j(\omega)\overset{N\to\infty}{\longrightarrow} 0.
\end{align}
\end{lem}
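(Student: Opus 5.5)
We apply Proposition \ref{large deviations} with $n=6$ and $i_1=i_3=i_5=1$, $i_2=i_4=i_6=-1$. For a $0$-pairing index set only the $r=0$ term survives. Indeed, distinct unpaired Gaussians give orthogonal monomials $g_{\vec j}$, up to the finitely many permutations of the $g$-indices among $\{1,3,5\}$ and among $\{2,4,6\}$. Taking the imaginary part only costs a factor $2$. The claim therefore reduces to the deterministic bound
\begin{align}
\sum_{\vec{j}\in \I^0_N} |a(\vec j)|^2 \overset{N\to\infty}{\longrightarrow} 0,
\end{align}
where the sum runs over $\vec j\in\Z_{\le N}^6$ with $\L(\vec j)=0$ and $|\P(\vec j)|>N$. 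Here $|a(\vec j)|^2\le \prod_{k\ne 3}\langle j_k\rangle^{-2}$, since the factor $|j_3|/\langle j_3\rangle$ is at most $1$.

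The plan is to exploit the constraint $|j_1-j_2+j_3|>N$ together with $|j_k|\le N$. This forces at least two of $j_1,j_2,j_3$ to have size $\gtrsim N$: if two of them were $\le N/3$, the third would have to exceed $2N/3$ in absolute value, and then $|\P|\le N$ is violated. More precisely, from $|j_1|+|j_2|+|j_3|>N$ and each $|j_k|\le N$, at least two satisfy $|j_k|>N/4$ up to constants. We then split according to which pair is large.

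If the large pair contains some index other than $3$, we obtain a gain $\langle j_k\rangle^{-2}\lesssim N^{-2}$ from that index. The remaining sum is handled as follows. Use the linear constraint $\L=0$ to eliminate one of $j_4,j_5,j_6$; the convolution-type sum $\sum_{j_4,j_5}\langle j_4\rangle^{-2}\langle j_5\rangle^{-2}\langle j_4-j_5+c\rangle^{-2}$ is bounded uniformly in $c$. The free index $j_3$ contributes $\sum_{|j_3|\le N}1\lesssim N$. Altogether this gives $\lesssim N^{-2}\cdot N\cdot \sum_{j_k}\langle j_k\rangle^{-2}\lesssim N^{-1}$.

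If instead the large pair is, say, $\{j_3,j_1\}$ or $\{j_3,j_2\}$, the bound above already includes the gain from $j_1$ (resp.\ $j_2$), so this case is covered as well. We would only need to check that the bookkeeping is done carefully. Concretely, one picks the eliminated variable among $j_4,j_5,j_6$ rather than among the first triplet, so that $j_3$ is the only variable carrying no decay.

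The main obstacle is precisely this lack of decay in $j_3$. That variable is where the derivative sits, so the weight is only $j_3/\langle j_3\rangle\sim 1$, and a naive count loses a factor $N$. The proof works only because $|\P|>N$ forces a second large frequency in $\{j_1,j_2\}$, whose $N^{-2}$ beats the $N$ from summing over $j_3$. We must also make sure that the $0$-pairing condition is what permits using only the $r=0$ term of Proposition \ref{large deviations}. Finally, we should verify that the symmetrization over permutations of indices does not reintroduce a problematic configuration; it does not, since each permuted configuration obeys the same bound.
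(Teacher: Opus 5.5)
There is a genuine gap in the counting step. Your reduction to $\sum_{\vec j\in\mathcal{I}^0_N}|a(\vec j)|^2\to 0$ and the bound $|a(\vec j)|\le\prod_{k\neq 3}\langle j_k\rangle^{-1}$ are fine and match the paper.

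The failure is the claim that $|j_1-j_2+j_3|>N$ with $|j_k|\le N$ forces two of $j_1,j_2,j_3$ to be $\gtrsim N$. This is false. For example, $(j_1,j_2,j_3)=(2,0,N)$ gives $\mathcal{P}=N+2$ with only $j_3$ large. Adding $(j_4,j_5,j_6)=(N-1,-3,0)$ completes it to a $0$-pairing in $\mathcal{I}_N$ for $N\ge 4$. In this regime there is no $N^{-2}$ gain from $\{j_1,j_2\}$.

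Nor can your ingredients recover the bound in another way. Those ingredients are the convolution bound over $(j_4,j_5,j_6)$ that is only uniform in $c$, weights $\langle j_1\rangle^{-2}\langle j_2\rangle^{-2}$, and $j_3$ free subject to $|\mathcal{P}|>N$. The best they give is $\sum_{j_1,j_2}\langle j_1\rangle^{-2}\langle j_2\rangle^{-2}\,\#\{|j_3|\le N:\ |j_1-j_2+j_3|>N\}\approx\sum_{j_1,j_2}\langle j_1\rangle^{-2}\langle j_2\rangle^{-2}\min(|j_1-j_2|,N)\sim\log N$, which does not tend to zero. So this is not a bookkeeping issue: the decay is not located in the first triplet at all.

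The missing idea is to move the constraint to the second triplet using $\mathcal{L}(\vec j)=0$. This gives $\mathcal{P}(\vec j)=j_4-j_5+j_6$, so $|\mathcal{P}|>N$ forces $\max(|j_4|,|j_5|,|j_6|)>N/3$, and all three of these indices carry a weight $\langle\cdot\rangle^{-2}$. The paper then eliminates $j_3$ via $\mathcal{L}=0$, rather than one of $j_4,j_5,j_6$. All five remaining free indices $j_1,j_2,j_4,j_5,j_6$ then carry $\langle\cdot\rangle^{-2}$, and the large one contributes $\sum_{|j|>N/3}\langle j\rangle^{-2}\lesssim N^{-1}$, giving $O(N^{-1})$ overall.

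Equivalently, within your own setup you may keep $j_3$ free if you replace the uniform convolution bound by the sharper $\sum_{j_4,j_5}\langle j_4\rangle^{-2}\langle j_5\rangle^{-2}\langle c-j_4+j_5\rangle^{-2}\lesssim\langle c\rangle^{-2}$. Take $c=\mathcal{P}(\vec j)$, so $|c|>N$. This holds because one of $|j_4|,|j_5|,|c-j_4+j_5|$ must be $\ge|c|/3$. The resulting factor $N^{-2}$ then beats the factor $N$ from summing over $j_3$.
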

\begin{proof}
From the constraint $|\P(\j)|>N$, we have that $\max\{|j_4|, |j_5|, |j_6|\} \ge N/3$, and from the identity $\L(\j)=0$. We can reduce the sum to only five indices $j_1, j_2, j_4, j_5, j_6$. Using the bound, 
\begin{align}
    a(\j)\le \frac{1}{\langle j_1\rangle \langle j_2\rangle 
    \langle j_4\rangle
    \langle  j_5\rangle
    \langle j_6\rangle},
\end{align}
we conclude from
\begin{align}
    \sum_{\substack{ j_1, j_{2}, j_{4},j_{5, j_6} \in \Z_{\le N}    \\
    \max\{|j_{4}|, |j_{5}|, |j_{6}|\}\ge N/3}} \frac{1}{\langle j_1\rangle^2 \langle j_{2}\rangle^2 \langle j_{4}\rangle^2 \langle j_{5}\rangle^2\langle j_{6}\rangle^2 } = O(N^{-1}).
\end{align}
\end{proof}
We deal first with the cases where $(k,l)$ belong to the same triplet
\begin{lem}\label{same triplet lemma}
We have the following for every $\mathcal{Z}_N = \I_N^{j_5, j_6},\I_N^{j_4, j_5},\I_N^{j_2, j_3}, \I_N^{j_1, j_2}$.
\begin{align}
        \im \sum_{\j\in \Z_{\le N}} a(\j) \mathbf{1}_{\mathcal{Z}_N} g_\j(\omega)=0
    \end{align}    
\end{lem}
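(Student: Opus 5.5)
The plan is to show that each of the four index sets $\I_N^{j_5,j_6}$, $\I_N^{j_4,j_5}$, $\I_N^{j_2,j_3}$, $\I_N^{j_1,j_2}$ is empty. If so, the sums are empty, so they (and in particular their imaginary parts) vanish identically. No probabilistic input is needed. The only ingredients are the two constraints defining $\I_N$: all indices lie in $\Z_{\le N}$, $\L(\j)=0$, and $|\P(\j)|>N$.

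The key observation is that an equality between two indices in the same triplet forces $\P(\j)=j_1-j_2+j_3$ to equal a single index. That index lies in $\Z_{\le N}$, contradicting $|\P(\j)|>N$.

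\emph{First triplet.} For $\I_N^{j_1,j_2}$ we have $j_1=j_2$, so $\P(\j)=j_3$ and $|\P(\j)|=|j_3|\le N$. For $\I_N^{j_2,j_3}$ we have $j_2=j_3$, so $\P(\j)=j_1$ and $|\P(\j)|\le N$. Either way this contradicts $|\P(\j)|>N$.

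\emph{Second triplet.} Here I would use the linear constraint $\L(\j)=0$ to rewrite $\P(\j)=j_4-j_5+j_6$. For $\I_N^{j_5,j_6}$, $j_5=j_6$ gives $\P(\j)=j_4$. For $\I_N^{j_4,j_5}$, $j_4=j_5$ gives $\P(\j)=j_6$. In both cases $|\P(\j)|\le N$, again a contradiction.

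Hence $\mathbf{1}_{\mathcal{Z}_N}\equiv 0$ on $\Z_{\le N}^6$ for each of the four choices of $\mathcal{Z}_N$, and the claimed identity holds trivially for every $N$ and every $\omega$. I do not expect any genuine obstacle. The only point to check is that in the second triplet one must pass through $\L(\j)=0$ before the pairing collapses $\P$. This is also why the different-triplet pairings, such as $(j_3,j_4)$, do not collapse this way and need the separate treatment announced for Lemmas \ref{different triplet lemma}, \ref{tilde} and \ref{hat}.
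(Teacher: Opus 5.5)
Your proposal is correct and matches the paper's proof. The paper treats $\I_N^{j_5,j_6}$: it notes that $|\P(\j)|>N$ together with $\L(\j)=0$ forces $|j_4|>N$, which contradicts $j_4\in\Z_{\le N}$, so the sum is empty, and it declares the other three cases analogous. You write out all four cases explicitly, and for the first-triplet pairings you correctly observe that $\L(\j)=0$ is not even needed.
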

\begin{proof}
Let us consider the case $\mathcal{Z}_N=\I_N^{j_5, j_6}$ as the other cases follow by essentially the same argument. In this case, notice we have from $|\P(\j)|>N$ that $|j_4|>N$. However, this set is disjoint from $|j_4|\le N$, and thus the sum collapses and this case does not contribute. 
\end{proof}
We now deal with the cases where $(k,l)$ belong to different triplets.
\begin{lem} \label{different triplet lemma}
    We have the following limit for every $\mathcal{Z}_N = \I_N^{j_1, j_4},\I_N^{j_1, j_6},\I_N^{j_2, j_5}$
    \begin{align}\label{same triplet eq}
        \im \sum_{\j\in \Z_{\le N}} a(\j) \mathbf{1}_{\mathcal{Z}_N} g_\j(\omega)\overset{N\to\infty}{\longrightarrow} 0.
    \end{align}
\end{lem}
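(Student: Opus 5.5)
Fix one of the three pairings, say $\mathcal{Z}_N=\I_N^{j_1,j_4}$; the cases $\I_N^{j_1,j_6}$ and $\I_N^{j_2,j_5}$ are handled the same way up to relabelling. I will not try to use cancellation from the imaginary part. Instead I bound the $L^2_\omega$ norm with Proposition \ref{large deviations}, applied with $r=1$ and $(X,Y)=((1),(4))$. For $\j\in\I_N^{j_1,j_4}$ the remaining four indices $(j_2,j_3,j_5,j_6)$ carry no further pairing across opposite signs. That proposition then gives
\begin{align}
\Big\|\sum_{\j} a(\j)\mathbf{1}_{\mathcal{Z}_N}g_\j\Big\|_{L^2_\omega}^2\lesssim \sum_{(j_2,j_3,j_5,j_6)}\Big(\sum_{j_1} |a(\j)|\mathbf{1}_{\mathcal{Z}_N}(\j)\Big)^2,
\end{align}
with $j_4=j_1$ inside the inner sum. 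The hard part of the proof is therefore to show this deterministic sum is $o(1)$ as $N\to\infty$.

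Next I record the constraints. Since $j_1=j_4$, the relation $\L(\j)=0$ becomes $j_3-j_2+j_5-j_6=0$. The condition $|\P(\j)|=|j_1-j_2+j_3|>N$, together with all $|j_i|\le N$, forces $\max(|j_1|,|j_2|,|j_3|)>N/3$. I use the bound $|a(\j)|\le \langle j_1\rangle^{-2}\langle j_2\rangle^{-1}\langle j_5\rangle^{-1}\langle j_6\rangle^{-1}$: the factor $j_3/\langle j_3\rangle$ is bounded, and $j_4=j_1$. The inner sum over $j_1$ then converges because of $\langle j_1\rangle^{-2}$, and $j_3$ is determined by $j_2,j_5,j_6$. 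This leaves a sum over the three free variables $j_2,j_5,j_6$, with weight $\langle j_2\rangle^{-2}\langle j_5\rangle^{-2}\langle j_6\rangle^{-2}$, which is summable.

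To extract decay in $N$ I split into cases.
\begin{itemize}
\item If $|j_1|>N/3$, the inner sum is $O(N^{-1})$, so its square is $O(N^{-2})$.
\item If $|j_2|>N/3$, the outer weight gives $O(N^{-1})$.
\item If $|j_3|>N/3$, then $j_3=j_2-j_5+j_6$ forces one of $j_2,j_5,j_6$ to be $\gtrsim N$, and again the outer sum is $O(N^{-1})$.
\end{itemize}
For $\I_N^{j_1,j_6}$ the same scheme works, now with $j_3=j_2+j_4-j_5$. For $\I_N^{j_2,j_5}$ the paired index $j_2=j_5$ gets the squared weight $\langle j_2\rangle^{-2}$, and the relations become $j_1+j_3=j_4+j_6$ and $|j_1-j_2+j_3|>N$.

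The main obstacle is the position of $j_3$, which carries the numerator and so gains no decay. The reduction only closes if $j_3$ is one of the \emph{unpaired} variables and can be written through the other free variables via $\L=0$. This is exactly where $\I_N^{j_3,j_4}$ and $\I_N^{j_3,j_6}$ break down, so I will check for each of the three listed pairings that $j_3$ is eliminated from the free summation. If it turns out that in some case $j_3$ must remain summed against only a $\langle\cdot\rangle^{-1}$ weight, I will instead eliminate a different variable and use $|j_3|\le |j_2|+|j_5|+|j_6|$ to redistribute the weights.
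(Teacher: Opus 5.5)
Your proposal is correct and is essentially the paper's argument. Both apply Proposition~\ref{large deviations} to the single pairing and drop $j_3/\langle j_3\rangle$ so the paired index carries $\langle j\rangle^{-2}$; both then eliminate $j_3$ via $\L(\j)=0$ and extract $O(N^{-1})$ from an index of size $\gtrsim N$. The only cosmetic differences are that the paper finds the large index in the second triplet, since $\L(\j)=0$ gives $\P(\j)=j_4-j_5+j_6$ and hence $\max\{|j_4|,|j_5|,|j_6|\}\ge N/3$, which spares you the separate case $|j_3|>N/3$, and that it gets $\I_N^{j_1,j_6}$ from $\I_N^{j_1,j_4}$ by the $j_4\leftrightarrow j_6$ symmetry.
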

\begin{proof}
   First, notice that if we show \eqref{same triplet eq} for $\mathcal{Z}_N=\I_N^{j_1, j_4}$, then \eqref{same triplet eq} automatically holds for $\mathcal{Z}_N=\I_N^{j_1, j_6}$ due to the $j_4 \leftrightarrow j_6$ permutation symmetry. We will therefore prove the statement only for the former. We must have, with $j=j_4=j_1$, $\max\{|j|, |j_{5}|, |j_{6}|\}\ge N/3$ and the condition $\L(\j)=0$ means that we need only sum over indices $j, j_2, j_5, j_6$:
    \begin{align}
        \sum_{\substack{  j_{2}, j_{3},j_{5}, j_6 \in \Z_{\le N}    \\
    \max\{|j|, |j_{5}|, |j_{6}|\}\ge N/3}} \left(\sum_{j\in\Z_{\le N}} \frac{1}{\langle  j\rangle^2 \langle j_{2}\rangle \langle j_{5}\rangle \langle j_{6}\rangle }\right)^2 =O(N^{-1}).
    \end{align}
    Lastly, we deal with the case $\mathcal{Z}_N=\I_N^{j_2, j_5}$, in which case, with $j=j_2=j_5$, we have $\max\{|j_4|, |j|, |j_{6}|\}\ge N/3$ and we only need to sum over indices $j_1, j, j_4, j_6$:
    \begin{align}
        \sum_{\substack{  j_1,j, j_{4}, j_6 \in \Z_{\le N}    \\
    \max\{|j_4|, |j|, |j_{6}|\}\ge N/3}} \left(\sum_{j\in\Z_{\le N}} \frac{1}{\langle  j\rangle^2 \langle j_{1}\rangle \langle j_{4}\rangle \langle j_{6}\rangle }\right)^2 =O(N^{-1}).
    \end{align}
\end{proof}
We now treat the cases $\tilde{\I}_N^{j_3, j_4}, \tilde{\I}_N^{j_3, j_6}$.
 We give two ways of treating these terms. The first is based on the estimate
\begin{align}
    \sum_{j\in \Z_{\le N}} \frac{1}{\langle j \rangle \langle N-j \rangle} \lesssim \frac{\log(N)}{N},
\end{align}
which may be proved by separately considering the regions $\{0\le |j|<N/2\}$, $\{N/2<|j|\le N\}$, and showing  the contributions vanish \textit{after} taking $N\to \infty$. The second method  is based on a cancellation property which states that the contributions of these terms  vanish for any fixed $N$. 
\begin{lem}
    The following limit holds:
    \begin{align}
        \im \sum_{\j\in \Z_{\le N}} a(\j) \mathbf{1}_{\tilde{\I}_N^{j_3, j_4} \cup \tilde{\I}_N^{j_3, j_6}}g_\j(\omega) \overset{N\to\infty}{\longrightarrow} 0
    \end{align}
\end{lem}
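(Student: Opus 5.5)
The plan is to prove the limit in $L^2_\omega$, as in \eqref{Probabilistic estimate 2.0}, by a direct quantitative bound giving a decay rate of order $N^{-1}\log^2 N$. This is the first of the two methods mentioned above. By the $j_4\leftrightarrow j_6$ permutation symmetry already used in Lemma \ref{different triplet lemma}, it suffices to treat $\tilde{\I}_N^{j_3,j_4}$.

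\textbf{Reduction of the sum.} Set $j:=j_3=j_4$. The constraint $\L(\j)=0$ becomes $j_1-j_2+j_5-j_6=0$. Writing $m:=j_6-j_5=j_1-j_2$, the condition $|\P(\j)|>N$ reads $|j+m|>N$. On $\tilde{\I}_N^{j_3,j_4}$ the remaining indices $j,j_1,j_2,j_5,j_6$ are pairwise distinct, so the Gaussian factorises as $g_\j=|g_j|^2\, g_{j_1}\overline{g}_{j_2}g_{j_5}\overline{g}_{j_6}$, and the coefficient is
\begin{align}
a(\j)=\frac{j}{\langle j\rangle^2\langle j_1\rangle\langle j_2\rangle\langle j_5\rangle\langle j_6\rangle}.
\end{align}
We then split $|g_j|^2=(|g_j|^2-1)+1$. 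Dropping the imaginary part only costs a constant, so we bound the full complex sums in $L^2_\omega$.

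\textbf{Fluctuation part.} The piece with $|g_j|^2-1$ is a centered multilinear Gaussian in the five distinct indices. We estimate it by Proposition \ref{large deviations}, or simply by orthogonality, since no further pairings occur. Its $L^2_\omega$ norm squared is bounded by
\begin{align}
\sum \frac{1}{\langle j\rangle^2\langle j_1\rangle^2\langle j_2\rangle^2\langle j_5\rangle^2\langle j_6\rangle^2},
\end{align}
summed over $j_1-j_2=j_6-j_5$ and $|j+j_6-j_5|>N$. This last constraint forces $\max\{|j|,|j_5|,|j_6|\}\ge N/3$. Using the relation to eliminate $j_2$, as in Lemma \ref{0-pairing}, gives $O(N^{-1})$.

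\textbf{Mean part (main obstacle).} The remaining piece is the quadrilinear Gaussian $\sum c(m)\,g_{j_1}\overline{g}_{j_2}g_{j_5}\overline{g}_{j_6}\big/(\langle j_1\rangle\langle j_2\rangle\langle j_5\rangle\langle j_6\rangle)$, where
\begin{align}
c(m)=\sum_{\substack{|j|\le N\\ |j+m|>N}}\frac{j}{\langle j\rangle^2}.
\end{align}
The $j$-sum is only conditionally small, so this is where the work lies. Since $|j|\le N$ and $|j+m|>N$, the index $j$ ranges over roughly $N-|m|<|j|\le N$. Hence, in the spirit of the bound $\sum_{|j|\le N}\langle j\rangle^{-1}\langle N-j\rangle^{-1}\lesssim \log N/N$:
\begin{itemize}
\item for $|m|\le N/2$, $|c(m)|\lesssim |m|/N$;
\item for all $m$, $|c(m)|\lesssim \log N$.
\end{itemize}
Applying Proposition \ref{large deviations} to this quadrilinear form, the main (non-paired) contribution is
\begin{align}
\sum_{j_5,j_6}\sum_{j_1}\frac{|c(j_6-j_5)|^2}{\langle j_1\rangle^2\langle j_1-m\rangle^2\langle j_5\rangle^2\langle j_6\rangle^2}.
\end{align}
The $j_1$-sum is $O(1)$. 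For $|m|\le N/2$ we use $|c(m)|^2\lesssim (j_5^2+j_6^2)/N^2$, which yields $O(N^{-1})$. For $|m|>N/2$, one of $|j_5|,|j_6|$ is at least $N/4$, and with $|c|^2\lesssim\log^2N$ this yields $O(N^{-1}\log^2N)$.

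\textbf{Paired terms.} Any further pairing among $j_1,\dots,j_6$ would contradict $\#\{j_1,\dots,j_6\}=5$ on $\tilde{\I}_N^{j_3,j_4}$, except pairings inside the quadrilinear form that reduce its degree. These have strictly fewer free summation indices and are bounded in the same way.

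Collecting the three pieces gives an $L^2_\omega$ bound of $O(N^{-1/2}\log N)$, which tends to zero as $N\to\infty$.
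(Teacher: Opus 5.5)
Your argument is correct and gives the same rate, $O(N^{-1}\log^2 N)$ for the squared $L^2_\omega$ norm, as the paper's first method. The bookkeeping, however, is different.

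\textbf{The paper's route.} The paper applies the large-deviation bound (Proposition \ref{large deviations}) once, with absolute values, to the $(3,4)$-pairing. For fixed unpaired indices it uses $|j_1-j_2|\ge N-|j_3|$ to get $\max\{|j_1|,|j_2|\}\ge (N-|j_3|)/2$. It then moves the decay of $\langle j_1\rangle^{-1}$ (say) onto the paired index and concludes with $\sum_{|j|\le N}\langle j\rangle^{-1}\langle N-|j|\rangle^{-1}\lesssim N^{-1}\log N$. The diagonal ($j=j'$) contribution is absorbed into that proposition.

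\textbf{Your route.} You Wick-order $|g_j|^2$, so the diagonal part appears explicitly as the fluctuation term and is handled by plain orthogonality. For the mean part you keep the decay of $\langle j_1\rangle^{-1}\langle j_2\rangle^{-1}$ in the $j_1$-sum and put the smallness on $m=j_6-j_5$. Either $|m|\le N/2$, which confines $j$ to a window of length $|m|$ near $\pm N$, or $|m|>N/2$, which forces $\max\{|j_5|,|j_6|\}>N/4$. This is self-contained and makes it transparent where the smallness comes from; the paper's version is shorter.

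\textbf{Two small remarks.} First, your bounds on $c(m)$ never use cancellation. They hold equally for $\sum |j|\langle j\rangle^{-2}$ over the window, so the $j$-sum is not really ``only conditionally small.'' This is also what makes it harmless that the true coefficient of the mean part excludes $j\in\{j_1,j_2,j_5,j_6\}$, which your definition of $c(m)$ ignores: removing terms only decreases the absolute-value sum. Second, on $\tilde{\I}_N^{j_3,j_4}$ the indices $j_1,j_2,j_5,j_6$ are already pairwise distinct. Your ``paired terms'' paragraph is therefore vacuous: the quadrilinear form has only its unpaired contribution.
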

\begin{proof}
    We notice that $\tilde{\I}_N^{j_3, j_4}$ and $\tilde{\I}_N^{j_3, j_6}$ are disjoint so we treat them separately. We focus on dealing with the former, as the latter can be treated similarly. We note that
    \begin{align}
        |j_1-j_2| &= |j_1-j_2+j_3-j_3|\ge |j_1-j_2+j_3|-|j_3|\ge N- |j_3|.
    \end{align}
    Therefore, we must have
    \begin{align}
        \max\{|j_1|, |j_2|\} \ge \frac{N-|j_3|}{2}.
    \end{align}
    Assume that $|j_1|\ge |j_2|$, the other case being treated similarly. We then estimate
    \begin{align}
        \sum_{\substack{  j_1,j_2,, j_{4}, j_5\in \Z_{\le N}    \\
    |j_1-j_2+j_3|>N \\ |j_1|\ge j_2 }} \left(\sum_{j_3\in\Z_{\le N}} \frac{1}{\langle  j_1\rangle \langle j_{2}\rangle \langle j_{3}\rangle \langle j_{4}\rangle \langle j_{5}\rangle }\right)^2 
    &\lesssim  \sum_{\substack{  j_1,j_2,, j_{4}, j_5\in \Z_{\le N}    \\
    |j_1-j_2+j_3|>N}} \frac{1}{ \langle j_{2}^2\rangle \langle j_{4}\rangle^2 \langle j_{5}\rangle^2 } \left(\sum_{j_3\in\Z_{\le N}}\frac{1}{\langle j_3 \rangle \langle N-j_3 \rangle}  \right)^2 \\
    &\lesssim \sum_{\substack{  j_1,j_2,, j_{4}, j_5\in \Z_{\le N}}} \frac{1}{ \langle j_{2}^2\rangle \langle j_{4}\rangle^2 \langle j_{5}\rangle^2 }   \frac{\log(N)^2}{N^2}.
    \end{align}
    Summing over $j_1$ produces a factor of $N$, giving a final bound of
    \begin{align}
        \frac{\log(N)^2}{N} \overset{N\to\infty}{\longrightarrow} 0
    \end{align}
    
\end{proof}
Now, we present the second method.
\begin{lem} \label{tilde}
    We have the following identity:
    \begin{align}
        \im \sum_{\j\in \Z_{\le N}} a(\j) \mathbf{1}_{\tilde{\I}_N^{j_3, j_4} \cup \tilde{\I}_N^{j_3, j_6}}g_\j(\omega)=0
    \end{align}
\end{lem}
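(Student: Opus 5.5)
The identity will follow from a symmetry of the summation domain, not from any estimate. The plan is to show that, after factoring out the paired Gaussians, the remaining sum is invariant under a relabelling that maps each term to its complex conjugate. Hence the sum is real, and its imaginary part vanishes for every fixed $N$. The two sets are disjoint, as noted in the previous lemma, so I treat $\tilde{\I}_N^{j_3,j_4}$ first and then reduce $\tilde{\I}_N^{j_3,j_6}$ to it.

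\textbf{The set $\tilde{\I}_N^{j_3,j_4}$.} Write $j_3=j_4=k$. Then $g_{\j}(\omega)=|g_k(\omega)|^2\, g_{j_1}\overline{g}_{j_2}g_{j_5}\overline{g}_{j_6}$ and
\[
a(\j)=\frac{k}{\langle k\rangle^2}\,\frac{1}{\langle j_1\rangle\langle j_2\rangle\langle j_5\rangle\langle j_6\rangle}.
\]
The constraints read as follows:
\begin{itemize}
\item $\L(\j)=0$ becomes $j_1-j_2=j_6-j_5=:m$;
\item $|\P(\j)|>N$ becomes $|m+k|>N$;
\item the set $\{j_1,j_2,k,j_5,j_6\}$ has exactly five elements;
\item all indices lie in $\Z_{\le N}$.
\end{itemize}
The contribution therefore takes the form $\sum_{k}|g_k|^2\frac{k}{\langle k\rangle^2}\, S_k(\omega)$. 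The outer factor $|g_k|^2 k\langle k\rangle^{-2}$ is real, so it suffices to show that each inner sum $S_k$ is real.

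\textbf{The involution.} Consider the map $(j_1,j_2,j_5,j_6)\mapsto(j_1',j_2',j_5',j_6')=(j_6,j_5,j_2,j_1)$. I will check the following properties.
\begin{itemize}
\item $m'=j_1'-j_2'=j_6-j_5=m$, so both $\L=0$ and $|m+k|>N$ are preserved.
\item The five-distinct-values condition and the box $\Z_{\le N}$ are invariant under permutations, so they are preserved.
\item The weight $\prod\langle j_i\rangle^{-1}$ is symmetric, so it is preserved.
\item The Gaussian part maps to $g_{j_6}\overline g_{j_5}g_{j_2}\overline g_{j_1}=\overline{g_{j_1}\overline g_{j_2}g_{j_5}\overline g_{j_6}}$.
\end{itemize}
The map is an involution, hence a bijection of the index set. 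Together these properties give $S_k=\overline{S_k}$, which is what we need. Summing over $k$, the whole contribution is real and its imaginary part is zero.

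\textbf{The set $\tilde{\I}_N^{j_3,j_6}$.} I will use the relabelling $j_4\leftrightarrow j_6$. It maps $\tilde{\I}_N^{j_3,j_4}$ bijectively onto $\tilde{\I}_N^{j_3,j_6}$. It preserves $\L$ and $\P$, which does not involve $j_4$ or $j_6$. It also preserves $a(\j)$ and $g_{\j}$, because $j_4$ and $j_6$ both carry conjugated Gaussians. So this contribution equals the previous one and is also real.

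The only real obstacle is finding the right involution. The naive swap $j_1\leftrightarrow j_2$, $j_5\leftrightarrow j_6$ also conjugates the Gaussian, but it sends $m\mapsto -m$ and so destroys the asymmetric constraint $|m+k|>N$. The choice $(j_1,j_2,j_5,j_6)\mapsto(j_6,j_5,j_2,j_1)$ is needed precisely because it keeps $m$ fixed while still conjugating. Once this is in place, the remaining checks are routine.
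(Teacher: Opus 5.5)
Your proof is correct and follows the paper's argument. The paper pairs $(j_1,j_2,j,j,j_5,j_6)$ with $(j_6,j_5,j,j,j_2,j_1)$: both have the same real weight and conjugate Gaussian factors, so their imaginary parts cancel. Your bijection formulation makes the distinctness of the paired vectors unnecessary, and your relabelling $j_4\leftrightarrow j_6$ spells out the $\tilde{\I}_N^{j_3,j_6}$ case that the paper only calls ``similar''.
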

\begin{proof}
    We notice that $\tilde{\I}_N^{j_3, j_4}$ and $\tilde{\I}_N^{j_3, j_6}$ are disjoint so we treat them separately. We focus on dealing with the former, as the latter can be treated similarly. We first notice that both vectors 
    \begin{align}\label{permuted vectors}
        (j_1, j_2, j, j, j_5, j_6) \quad (j_6, j_5, j, j, j_2, j_1)
    \end{align}
belong to $\tilde{\I}_N^{j_3, j_4}$. Moreover, both the above vectors are different since by assumption $\#\{j, j_1, j_2, j_5, j_6\}=5$, and any permutation different from the identity (as done in \eqref{permuted vectors}) produces a genuinely new vector. The first vector in \eqref{permuted vectors} contributes the following to the overall estimate \eqref{Probabilistic estimate 2.0}:
\begin{align}
   \im \frac{j}{\langle j_1\rangle \langle j_2\rangle 
    \langle j\rangle^2
    \langle  j_5\rangle
    \langle j_6\rangle} g_{j_1}\overline{g}_{j_2} |g_{j}|^2 g_{j_5}\overline{g}_{j_6},
\end{align}
while the second vector in \eqref{permuted vectors} contributes
\begin{align}
    \im \frac{j}{\langle j_1\rangle \langle j_2\rangle 
    \langle j\rangle^2
    \langle  j_5\rangle
    \langle j_6\rangle} g_{j_6}\overline{g}_{j_5} |g_{j}|^2 g_{j_2}\overline{g}_{j_1}.
\end{align}
Hence, adding up their contribution gives
\begin{align}
    \frac{j}{\langle j_1\rangle \langle j_2\rangle 
    \langle j\rangle^2
    \langle  j_5\rangle
    \langle j_6\rangle} \im\left(g_{j_1}\overline{g}_{j_2} |g_{j}|^2 g_{j_5}\overline{g}_{j_6} + g_{j_6}\overline{g}_{j_5} |g_{j}|^2 g_{j_2}\overline{g}_{j_1}\right)=0.
\end{align}
\end{proof}
Next, we treat the cases $\hat{\I}_N^{j_3, j_4}, \hat{\I}_N^{j_3, j_6}$.
\begin{lem}\label{hat}
    We have the following result:
    \begin{align}
        \im \sum_{\j\in \Z_{\le N}} a(\j) \mathbf{1}_{\hat{\I}_N^{j_3, j_4} \cup \hat{\I}_N^{j_3, j_6}}g_\j(\omega) \overset{N\to\infty}{\longrightarrow} 0.
    \end{align}
\end{lem}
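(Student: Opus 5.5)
The plan is to treat the sets $\hat{\I}_N^{j_3,j_4}$ and $\hat{\I}_N^{j_3,j_6}$ separately. By the $j_4\leftrightarrow j_6$ symmetry of the constraints, only $\hat{\I}_N^{j_3,j_4}$ needs to be handled.

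First I will list the configurations in $\hat{\I}_N^{j_3,j_4}$. Write $j=j_3=j_4$. Because $(j_3,j_4)$ is a 1-pairing, no further equality between an index of sign $+1$ and an index of sign $-1$ can occur among the remaining ones. So the condition $\#\{j_1,\dots,j_6\}<5$ forces an extra coincidence between indices of the \emph{same} sign. Up to the symmetry $j_1\leftrightarrow j_5$, the cases are:
\begin{itemize}
\item[(a)] $j_1=j_5$;
\item[(b)] $j_2=j_6$;
\item[(c)] $j_1=j$ or $j_5=j$, so that $j$ appears with multiplicity three;
\end{itemize}
together with their intersections, which are handled identically and only have fewer free indices. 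In every case the linear constraint $\L(\j)=0$ reduces to $j_1-j_2+j_5-j_6=0$, and the extra coincidence removes one more free variable. I will apply Proposition \ref{large deviations} with the pairing $(X,Y)=((3),(4))$, which is the relevant one. It bounds the $L^2_\omega$ norm squared by
\begin{align}
\sum_{\vec h}\Big(\sum_{j}|a(\j)|\Big)^2,
\end{align}
where $\vec h$ runs over the unpaired indices, restricted to the configuration.

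Next comes the estimate in each case. Using $|a(\j)|\le \frac{1}{\langle j_1\rangle\langle j_2\rangle\langle j\rangle\langle j_5\rangle\langle j_6\rangle}$, case (a) gives
\begin{align}
\sum_{\substack{j_1,j_2\in\Z_{\le N}\\ j_6=2j_1-j_2}}\frac{1}{\langle j_1\rangle^4\langle j_2\rangle^2\langle j_6\rangle^2}\Big(\sum_{\substack{j\in \Z_{\le N}\\ |j_1-j_2+j|>N}}\frac{1}{\langle j\rangle}\Big)^2 .
\end{align}
I split according to the size of $|j|$.
\begin{itemize}
\item If $|j|\le N/2$, then $|j_1-j_2|>N/2$. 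Hence $\max(|j_1|,|j_2|)\gtrsim N$, and the outer weights give a factor $N^{-1}$ against the $(\log N)^2$ from the inner sum.
\item If $|j|>N/2$, I reuse the mechanism of the preceding lemma. The constraint gives $|j_1-j_2|\ge N-|j|$, so one of $\langle j_1\rangle,\langle j_2\rangle$ contributes $\langle N-|j|\rangle^{-1}$. The inner sum then becomes $\sum_j \frac{1}{\langle j\rangle\langle N-|j|\rangle}\lesssim \frac{\log N}{N}$.
\end{itemize}
In both regimes the total is $O((\log N)^2 N^{-1})\to 0$. Case (b) is identical after exchanging the roles of $(j_1,j_5)$ and $(j_2,j_6)$. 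In case (c) the index $j$ appears three times. This gives an extra factor $\langle j\rangle^{-1}$ (the Gaussian moment $\mathbf E|g_j|^2g_j\cdots$ is harmless), so the inner sum is over the paired index with weight $\langle j\rangle^{-2}$ or better, and it is even easier.

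The step I expect to be the main obstacle is the regime $|j|\sim N$. There the inner sum does not decay by itself, and the crude bound $\sum_{|j|\le N}\langle j\rangle^{-1}\sim\log N$ is useless. The plan is to convert the constraint $|\P(\j)|>N$ together with $|j|\le N$ into a lower bound on $|j_1-j_2|$ (or on $|j_5-j_6|$ via $\L(\j)=0$), and from that extract the factor $\langle N-|j|\rangle^{-1}$. I also need to check, in each case, that the remaining outer sum over the unpaired indices converges uniformly in $N$; the extra same-sign coincidence is exactly what secures this.
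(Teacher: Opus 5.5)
Your argument is correct. On the triple-coincidence configurations (your case (c)) it coincides with the paper's proof. The paper considers only $\j$ with $j_3=j_4=j_l=j$, $l\in\{1,2,5,6\}$, bounds $|a(\j)|\le\langle j\rangle^{-2}\langle j_{l_1}\rangle^{-1}\langle j_{l_2}\rangle^{-1}\langle j_{l_3}\rangle^{-1}$, and extracts $O(N^{-1})$ from $\max\{|j|,|j_{l_1}|,|j_{l_2}|,|j_{l_3}|\}\ge N/3$. In (c) you should say explicitly that the decay comes from this constraint, since $\sum_j\langle j\rangle^{-2}$ alone is only $O(1)$.

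The genuine difference is your cases (a) $j_1=j_5$ and (b) $j_2=j_6$. These do belong to $\hat{\I}_N^{j_3,j_4}$, because a 1-pairing only forbids cross-sign coincidences. However, they carry no extra factor $\langle j\rangle^{-1}$, so the paper's one-line bound does not reach them, and its proof does not mention them. Your split $|j|\le N/2$ versus $|j|>N/2$ handles them correctly. The one step to make explicit: move a factor $\langle j_1\rangle^{-1}\langle j_2\rangle^{-1}$ from the squared outer weight into the $j$-sum before using $\max(\langle j_1\rangle,\langle j_2\rangle)\gtrsim\langle N-|j|\rangle$. The remaining outer weight, e.g. $\langle j_1\rangle^{-2}\langle 2j_1-j_2\rangle^{-2}$ in case (a), is still summable. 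So your route is longer than the paper's but covers all of $\hat{\I}_N^{j_3,j_4}$.

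Two further remarks.

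\begin{enumerate}
\item $j_1\leftrightarrow j_5$ is not a symmetry of $|\P(\j)|>N$. The relevant symmetry is the involution $(j_1,j_2,j_5,j_6)\mapsto(j_6,j_5,j_2,j_1)$ of Lemma \ref{tilde}. It preserves $\L(\j)=0$, $\P(\j)$, $a(\j)$, the 1-pairing condition and the number of distinct entries. It supplies the subcase $j_6=j$, which is missing from your (c). The subcases $j_2=j$ and $j_5=j$ are actually empty: they force $\P(\j)=j_1$, resp. $\P(\j)=j_6$, contradicting $|\P(\j)|>N$.
\item This involution has no fixed points on 1-pairings, since a fixed point needs $j_1=j_6$ and $j_2=j_5$. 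It exchanges (a) and (b) and maps $g_\j$ to $\overline{g_\j}$. So, exactly as in Lemma \ref{tilde}, the imaginary part of the whole $\hat{\I}_N^{j_3,j_4}$ contribution vanishes identically. This would let you bypass the $|j|\sim N$ estimate you identified as the main obstacle.
\end{enumerate}
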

\begin{proof}
    We will again focus on the case $\hat{\I}_N^{j_3, j_4}$, the other one being similar. We consider vectors $\j$ such that $j_3=j_4=j_l=j$, where $l\in\{1,2,5,6\}$. Next, notice that by the property $\P(\j)|>N$, we have that
\begin{align}
        \max\{|j|, |j_{l_1}|, |j_{l_2}|, |j_{l_3}|\}\ge N/3.
    \end{align}
where $\{l_1, l_2, l_3\}= \{1, 2, 5, 6\}\setminus\{l\}$ We also have the bound
\begin{align}
    a(\j) \le \frac{1}{\langle j\rangle^2 \langle j_{l_1}\rangle \langle j_{l_2}\rangle \langle j_{l_3}\rangle }
\end{align}
Thus, the desired estimate is reduced to
\begin{align}
\sum_{\substack{  j_{l_1}, j_{l_2},j_{l_3} \in \Z_{\le N}    \\
    \max\{|j|, |j_{l_1}|, |j_{l_2}|, |j_{l_3}|\}\ge N/3}} \left(\sum_{j\in\Z_{\le N}} \frac{1}{\langle  j\rangle^2 \langle j_{l_1}\rangle \langle j_{l_2}\rangle \langle j_{l_3}\rangle }\right)^2 = O(N^{-1}). 
\end{align}
\end{proof}
This concludes the proof of Proposition \ref{Probabilistic estimate}.

\section{Proof of Main Theorem}\label{Section: Proof of Main Theorem}

The proof that we present here is inspired by the proof of Theorem 1.5 in \cite{KNPSV}. However, there are changes in the function spaces used and for that purpose we use the material obtained in Section \ref{Section Gaussian Measures} and Section \ref{Section almost invariance}.

\subsection{Getting ready for Proof of Theorem \ref{Main Result}}

We note that the flow $\Phi_N(t)$ is defined in Section \ref{Section:FiniteDimApproximations}. For every $i,j\in \mathbb{N}$ and $D>0$, define the set
\begin{align}
    B^{i,j}_{s,p,D}:=\{u\in \F L^{s,p}: \| u \|_{\F L^{s,p}}\le D\sqrt{i+j} \}.
\end{align}
as well as
\begin{align}
    \Sigma^{i,j}_{N, D, s, p} := \bigcap_{h\in \mathbb{Z} \cap \{t: |t|\le 2^j c^{-1} \langle D\sqrt{i+j}\rangle ^{\beta}\}} \Phi_N(hc \langle D\sqrt{i+j}\rangle ^{-\beta})  B^{i,j}_{D, s,p}.
\end{align}
\begin{rmk}
    The elements in $\Sigma^{i,j}_{N,s,D}$ are exactly the initial data in 
$B^{i,j}_{s,D}$ which are mapped back to $B^{i,j}_{s,D}$ by $\Phi_N(t)$ along times 
equidistributed at distance $c \langle D\sqrt{i+j} \rangle^{-\beta}$ in the 
time interval $[-2^j,2^j]$
\end{rmk}
We state the following lemma:

\begin{lem} \label{Lemma 5.2 equivalent}
     There exists $D_0>0$ such that for $D>D_0$, we have the bound
    \begin{align}
        \sup_{|\tau|\le 2^j } \| \Phi_N(\tau)u\|_{\F L^{s,p}} \le  D\sqrt{i+j+1} \quad \forall u\in \Sigma^{i,j}_{s,p,D}, \quad \forall N\in \mathbb{N}.
    \end{align}

\end{lem}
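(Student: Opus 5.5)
Write $S:=D\sqrt{i+j}$ and $\tau_0:=c\langle S\rangle^{-\beta}$, where $c,\beta$ are the constants of Proposition \ref{S+1/S 2nd half}. The plan is to combine the grid structure of $\Sigma^{i,j}_{N,D,s,p}$ with the short-time bound of Proposition \ref{S+1/S 2nd half}, applied at each grid point. We use the group property $\Phi_N(t+t')=\Phi_N(t)\Phi_N(t')$ of the truncated flow, which is globally well-posed for each fixed $N$ as noted in Section \ref{Section:FiniteDimApproximations}.

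\emph{Step 1: the grid points.} Fix $u\in\Sigma^{i,j}_{N,D,s,p}$. By definition, $u\in\Phi_N(h\tau_0)B^{i,j}_{s,p,D}$ for every integer $h$ with $|h|\le 2^j c^{-1}\langle S\rangle^{\beta}$. Hence $\Phi_N(-h\tau_0)u\in B^{i,j}_{s,p,D}$, that is,
\begin{align}
\|\Phi_N(-h\tau_0)u\|_{\FL^{s,p}}\le S .
\end{align}
Since $h$ ranges symmetrically, this gives $\|\Phi_N(h\tau_0)u\|_{\FL^{s,p}}\le S$ for all such $h$. These grid times $h\tau_0$ cover $[-2^j,2^j]$ with mesh $\tau_0$.

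\emph{Step 2: between grid points.} Take $\tau\in[-2^j,2^j]$ and choose a grid time $h\tau_0$ with $|\tau-h\tau_0|\le\tau_0$ and $|h|\le 2^jc^{-1}\langle S\rangle^\beta$. If the endpoint rounding is an issue, use the ceiling in the range of $h$, or note that the nearest admissible grid point is still within $\tau_0$. Then
\begin{align}
\Phi_N(\tau)u=\Phi_N(\tau-h\tau_0)\,v_h,\qquad v_h:=\Phi_N(h\tau_0)u\in B^{s,p}_S .
\end{align}
Proposition \ref{S+1/S 2nd half} applies because $|\tau-h\tau_0|\le c\langle S\rangle^{-\beta}$, and it gives
\begin{align}
\|\Phi_N(\tau)u\|_{\FL^{s,p}}\le S+\frac1S
\end{align}
uniformly in $N$. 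The constant in front of $S$ is exactly $1$, which is what makes the next step work (Remark \ref{Remark on S+1/S}).

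\emph{Step 3: the elementary inequality.} It remains to show that
\begin{align}
D\sqrt{i+j}+\frac{1}{D\sqrt{i+j}}\le D\sqrt{i+j+1}
\end{align}
for $D$ large. Since $\sqrt{i+j+1}-\sqrt{i+j}=1/(\sqrt{i+j+1}+\sqrt{i+j})$, this is equivalent to
\begin{align}
\frac{1}{D\sqrt{i+j}}\le \frac{D}{\sqrt{i+j+1}+\sqrt{i+j}} .
\end{align}
For $i+j\ge1$ the right-hand side is at least $D/(3\sqrt{i+j})$, so the inequality holds once $D^2\ge3$; we take $D_0=\sqrt3$. If $i+j=0$ the set $B^{i,j}$ is $\{0\}$ and the claim is trivial, since $\Phi_N(t)0=0$.

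This step is where the $S+1/S$ form is essential. A bound of the form $CS$ with $C>1$ would not close here.

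\emph{Main obstacle.} The only point needing care is the bookkeeping in Step 2: the grid must actually reach $\pm2^j$, which is guaranteed by the ceiling in the index range, and $\tau_0$ must match the time of validity in Proposition \ref{S+1/S 2nd half} with the same $c,\beta$. This is why $c,\beta$ in the definition of $\Sigma$ are those of that proposition.
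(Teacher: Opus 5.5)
Your proof is correct and follows the same route as the paper's. The definition of $\Sigma^{i,j}_{N,D,s,p}$ gives $\|\Phi_N(h\tau_0)u\|_{\FL^{s,p}}\le D\sqrt{i+j}$ at the grid times, the group property together with Proposition \ref{S+1/S 2nd half} controls the flow within $\tau_0$ of each grid time, and the elementary inequality $D\sqrt{i+j}+(D\sqrt{i+j})^{-1}\le D\sqrt{i+j+1}$ closes the argument; your threshold $D_0=\sqrt3$ works for all $i+j\ge1$, slightly more robust than the paper's $\sqrt{1+\sqrt{3/2}}$, which is computed at $i+j=2$.

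The only slip is in your closing remark: the index range of $h$ in the definition of $\Sigma^{i,j}_{N,D,s,p}$ is a floor, not a ceiling, so the grid need not reach $\pm 2^j$. As you already observe in Step 2, this is harmless, since every $\tau\in[-2^j,2^j]$ lies within $\tau_0$ of an admissible grid time.
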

\begin{proof}
    Splitting the interval $[-2^j, 2^j]$ into intervals of width $c^{} \langle D\sqrt{i+j}\rangle ^{-\beta}$, it is enough to show that
    \begin{align} 
        \sup_{\tau\in[hc \langle D\sqrt{i+j}\rangle ^{-\beta}, (h+1)c \langle D\sqrt{i+j}\rangle ^{-\beta}]}\| \Phi_N(\tau)u\|_{\F L^{s,p}} \le  D\sqrt{i+j+1}
    \end{align}
    uniformly in
    \begin{align}
        h\in \mathbb{Z}\cap [-2^j  c^{-1} \langle D\sqrt{i+j}\rangle ^{\beta}, 2^j  c^{-1} \langle D\sqrt{i+j}\rangle ^{\beta}], \quad u \in \Sigma^{i,j}_{D, s,p}.
    \end{align}
    Now we proceed to prove the above claim. We choose $D_0$ large enough such that
    \begin{align}
        D\sqrt{i+j} +(D\sqrt{i+j})^{-1}\le D\sqrt{i+j+1}, \quad \forall i,j\in\mathbb{N}
    \end{align}
    holds for all $D>D_0$. We briefly sketch why this is possible. The above inequality is equivalent to
    \begin{align}
        D(\sqrt{i+j+1}- \sqrt{i+j})\ge  \frac{1}{\sqrt{i+j}} D^{-1}.
    \end{align}
    Using the difference of square identity, we may rewrite the left hand side as
\begin{align}
        \frac{D}{\sqrt{i+j+1} + \sqrt{i+j}} \ge   \frac{1}{\sqrt{i+j}} D^{-1}.
    \end{align}
Some rearranging shows the above is equivalent to
\begin{align}
    D^2 \ge 1+ \sqrt{1+\frac{1}{i+j}}, \quad  \forall i, j \in \mathbb{N}.
\end{align}
    Since the right hand side is decreasing in both $i,j$, it is enough to check it holds for $(i,j)=(1,1)$:
    \begin{align}
        D^2 \ge 1+ \sqrt{3/2},
    \end{align}
    and therefore we may choose
    \begin{align}
        D_0=\sqrt{1+\sqrt{3/2}}.
    \end{align}
    
    Next, we claim that
    \begin{align} \label{D+1/D bound}
\sup_{\tau\in[hc \langle D\sqrt{i+j}\rangle ^{-\beta}, (h+1)c \langle D\sqrt{i+j}\rangle ^{-\beta}]}\| \Phi_N(\tau)u\|_{\F L^{s,p}} \le  D\sqrt{i+j} +(D\sqrt{i+j})^{-1} \quad \forall u \in \Sigma^{i,j}_{D, s,p}.   
    \end{align}
    In order to prove \eqref{D+1/D bound}, we start by using the group property of $\Phi_N$ as follows,
    \begin{align}
        \sup_{\tau\in[hc \langle D\sqrt{i+j}\rangle ^{-\beta}, (h+1)c \langle D\sqrt{i+j}\rangle ^{-\beta}]}\| \Phi_N(\tau)u\|_{\F L^{s,p}} &= \sup_{0\le t\le     c \langle D\sqrt{i+j}\rangle ^{-\beta}}\| \Phi_N(t) \Phi_N(hc \langle D\sqrt{i+j}\rangle ^{-\beta}) u\|_{\F L^{s,p}} \nonumber\\
        &\le \sup_{|t| \le     c \langle D\sqrt{i+j}\rangle ^{-\beta}}\| \Phi_N(t) \Phi_N(hc \langle D\sqrt{i+j}\rangle ^{-\beta}) u\|_{\F L^{s,p}} \label{eq: group property}
    \end{align}
        Now, we note that since $u\in \Sigma^{i,j}_{D,s,p}$, for any $h$, there exists $u_0\in B^{i,j}_{D,s,p}$ such that $u= \Phi_N(-hc \langle D\sqrt{i+j}\rangle ^{-\beta})u_0$ and therefore, by using the group property of $\Phi_N(t)$ again, 
        \begin{align}
            \Phi_N(hc \langle D\sqrt{i+j}\rangle ^{-\beta}) u = \Phi_N(hc \langle D\sqrt{i+j}\rangle ^{-\beta})\Phi_N(-hc \langle D\sqrt{i+j}\rangle ^{-\beta})u_0=u_0\in B^{i,j}_{D,s,p}.
        \end{align}
        Now we are in a position to use Proposition \ref{S+1/S 2nd half} as follows 
        \begin{equation} \label{eq: group product}
            \sup_{|t| \le     c \langle D\sqrt{i+j}\rangle ^{-\beta}}\| \Phi_N(t) \Phi_N(hc \langle D\sqrt{i+j}\rangle ^{-\beta}) u\|_{\F L^{s,p}} \le  D\sqrt{i+j}+  (D\sqrt{i+j})^{-1} \le D\sqrt{i+j+1}.
        \end{equation}
        Finally, we combine \eqref{eq: group property} and \eqref{eq: group product} to obtain 
        \begin{align}
            \sup_{\tau\in[hc \langle D\sqrt{i+j}\rangle ^{-\beta}, (h+1)c \langle D\sqrt{i+j}\rangle ^{-\beta}]}\| \Phi_N(\tau)u\|_{\F L^{s,p}} \le D\sqrt{i+j+1},
        \end{align}
        uniformly in $h\in \mathbb{Z}\cap [-2^j  c^{-1} \langle D\sqrt{i+j}\rangle ^{\beta}, 2^j  c^{-1} \langle D\sqrt{i+j}\rangle ^{\beta}]$, 
        which finishes the proof. 
        \end{proof}
\par
We start with the following proposition:
\begin{prop} \label{Prop 5.3}
    For every $i\in \mathbb{N}, R>0$ and $(s,p)$ as in Proposition \ref{S+1/S 2nd half}, there exists $\Sigma^{s,p}_{i,R} \subset \F L^{s,p}$ and a constant $D>0$ such that 
    \begin{align}\label{2^(-i) bound}
        \Sigma^{s,p}_{i,R} \text{ is closed in } \F L^{s,p}; \\
        \rho_{R, \pm}(\F L^{s,p} \setminus  \Sigma^{s,p}_{i,R}) \le 2^{-i},
        \end{align}
        where $\rho_{R,\pm}$ are as defined in \eqref{untruncated rho definition}.  
     
      Additionally, for any $u \in \Sigma^{s,p}_{i,R}$,  there exists $j_k, L_k \to\infty$ and  $u_k\in \F L^{s,p}$ such that 
        \begin{align} \label{Section 5 property of Sigma-s-p-i-R}
        \| u_k-u\|_{\F L^{s,p}} \to 0, \quad \sup_{|\tau|< 2^{j'}} \| \Phi_{L_k}(\tau) u_k\|_{\F L^{s,p}} \le D \sqrt{i+j+1}, \quad \forall j'\in \{1, \cdots, j_k\}.
    \end{align}
\end{prop}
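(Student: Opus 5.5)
Following the argument for Theorem 1.5 in \cite{KNPSV}, I would build $\Sigma^{s,p}_{i,R}$ as a limsup of the finite-dimensional sets $\Sigma^{i,j}_{N,D,s,p}$. For fixed $i$, I first estimate $\rho_{R,N,\pm}(\F L^{s,p}\setminus \Sigma^{i,j}_{N,D,s,p})$. The complement of $\Sigma^{i,j}_{N,D,s,p}$ is the union over $|h|\lesssim 2^j c^{-1}\langle D\sqrt{i+j}\rangle^{\beta}$ of $\Phi_N(hc\langle D\sqrt{i+j}\rangle^{-\beta})(\F L^{s,p}\setminus B^{i,j}_{s,p,D})$. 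By Proposition \ref{almost invariance prop}, each term has $\rho_{R,N,\pm}$-measure at most $\rho_{R,N,\pm}(\F L^{s,p}\setminus B^{i,j}_{s,p,D})+o_N(1)$, uniformly in the Borel set and in $|t|\le 2^j$. The sub-Gaussian tail \eqref{sub-gaussian bound on rho_N} then gives
\begin{align}
\rho_{R,N,\pm}(\F L^{s,p}\setminus \Sigma^{i,j}_{N,D,s,p}) \le C\,2^j \langle D\sqrt{i+j}\rangle^{\beta} K e^{-kD^2(i+j)} + o_{N\to\infty}(1;i,j).
\end{align}
I would choose $D>D_0$ large, depending only on $k,K,\beta,c$, so that the main term is at most $2^{-i-j-3}$ for all $i,j$. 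Proposition \ref{truncated measure convergence} lets me replace $\rho_{R,N,\pm}$ by $\rho_{R,\pm}$ at the cost of another $o_N(1)$. Hence for each $j$ there is $N_j$ (increasing in $j$) with $\rho_{R,\pm}(\F L^{s,p}\setminus \Sigma^{i,j}_{N,D,s,p})\le 2^{-i-j-2}$ for all $N\ge N_j$.

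Next I define
\[
\Sigma^{s,p}_{i,R}:=\overline{\limsup_{N\to\infty}\bigcap_{j\ge 1}\Sigma^{i,j}_{N_j\vee N ,D,s,p}}
\]
with closure in $\F L^{s,p}$. A cleaner variant is $\Sigma_{i,R}:=\overline{\bigcap_{\ell}\bigcup_{N\ge \ell}\bigcap_{j\le J(N)}\Sigma^{i,j}_{N,D,s,p}}$, where $J(N)\to\infty$ is chosen so that $N\ge N_{J(N)}$. Closedness then holds by construction. For the measure bound, the sets $G_N:=\bigcap_{j\le J(N)}\Sigma^{i,j}_{N,D,s,p}$ satisfy $\rho_{R,\pm}(\F L^{s,p}\setminus G_N)\le\sum_j 2^{-i-j-2}\le 2^{-i-1}$. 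Fatou's lemma for sets, $\rho(\limsup G_N)\ge\limsup\rho(G_N)$, valid since $\rho_{R,\pm}$ is finite, then gives $\rho_{R,\pm}(\F L^{s,p}\setminus\Sigma^{s,p}_{i,R})\le 2^{-i}$. Each $\Sigma^{i,j}_{N,D,s,p}$ is a Borel (indeed closed) set, because $\Phi_N(t)$ is a homeomorphism of $\F L^{s,p}$ and $B^{i,j}$ is closed. This gives measurability.

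For the approximation property \eqref{Section 5 property of Sigma-s-p-i-R}, take $u\in\Sigma^{s,p}_{i,R}$. By definition of the closure and the limsup, there are $u_k\to u$ in $\F L^{s,p}$ and $L_k\to\infty$ with $u_k\in G_{L_k}$, i.e. $u_k\in\Sigma^{i,j'}_{L_k,D,s,p}$ for all $j'\le j_k:=J(L_k)\to\infty$. Lemma \ref{Lemma 5.2 equivalent} then gives $\sup_{|\tau|\le 2^{j'}}\|\Phi_{L_k}(\tau)u_k\|_{\F L^{s,p}}\le D\sqrt{i+j'+1}$ for each such $j'$, which is the claim.

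I expect the main obstacle to be the measure estimate. The number of time steps grows like $2^j\langle D\sqrt{i+j}\rangle^{\beta}$, with $\beta$ large ($\sim 18/\delta$), and this must be beaten by the Gaussian tail $e^{-kD^2(i+j)}$ uniformly in $i,j$. That requires choosing $D$ large independently of $i,j$. Also, the $o_N(1)$ errors from almost invariance and from $\rho_{R,N}\to\rho_R$ are uniform only for fixed $j$, which is why the diagonal choice $N_j$ is needed. One must further check that the almost-invariance statement applies to negative times as well, by time reversibility of $\Phi_N$.
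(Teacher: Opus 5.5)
Your proposal is correct and follows essentially the paper's route: a union bound over the $O(2^j c^{-1}\langle D\sqrt{i+j}\rangle^{\beta})$ time steps, combined with almost invariance and the sub-Gaussian tail for $\rho_{R,N,\pm}$, with $D$ chosen large and $N_j$ chosen diagonally, then a limsup-type set whose measure is controlled by the convergence $\rho_{R,N,\pm}\to\rho_{R,\pm}$ uniformly over Borel sets, and finally Lemma \ref{Lemma 5.2 equivalent} for the approximation property. The differences are cosmetic: you pass to $\rho_{R,\pm}$ for each fixed $j$ and take the closure of $\limsup_N G_N$, whereas the paper keeps $\rho_{R,N_J,\pm}$ until the last step and defines $\Sigma^{s,p}_{i,R}$ directly as the set of $\FL^{s,p}$-limits of $u_k\in\tilde{\Sigma}^{i,j_k}_{L_k,D,s,p}$, proving closedness by a diagonal argument.
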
 
        \begin{proof}
            We argue that we can choose $D$ and a sequence $N_j\to\infty$ such that
            \begin{align} \label{5}
                \rho_{R, N_j, \pm}(\F L^{s,p} \setminus \Sigma^{i,j'}_{N_j, D, s,p})\le 2^{-i-j'}, \quad \forall j'\in\{1, \cdots, j\}.
            \end{align}
            In fact, we note that by the definition of $\Sigma_{N_j, D, s,p}^{i, j}$ we have that, for fixed $D>0$, 
            \begin{align}
                \rho_{R, N_j, \pm} (\F L^{s,p} \setminus \Sigma_{N_j, D, s,p}^{i, j}) &= \rho_{R, N_j, \pm} \left(\bigcup_{h=-\left\lfloor \frac{2^{j} \langle D \sqrt{i+j} \rangle^{\beta}}{c} \right\rfloor}^{\left\lfloor \frac{2^{j} \langle D \sqrt{i+j} \rangle^{\beta}}{c} \right\rfloor}
\FL^{s,p} \setminus \Phi_{N}\!\left(h c \langle D \sqrt{i+j} \rangle^{-\beta}\right)\!\left(B^{i,j}_{D, s,p}\right) \right) \\
&\le \sum_{h=-\left\lfloor \frac{2^{j} \langle D \sqrt{i+j} \rangle^{\beta}}{c} \right\rfloor}^{\left\lfloor \frac{2^{j} \langle D \sqrt{i+j} \rangle^{\beta}}{c} \right\rfloor} \rho_{R, N_j, \pm} \left(\Phi_{N}\!\left(h c \langle D \sqrt{i+j} \rangle^{-\beta}\right) (\FL^{s,p} \setminus B^{i,j}_{D, s,p})\right).
            \end{align}
            Next, by Proposition \ref{almost invariance prop} for any $\varepsilon>0, j\in \mathbb{N}$, we have the existence of $N_{\varepsilon, j, R}$ such that for $N\ge N_{\varepsilon, j, R}$, 
            \begin{align}
                \sup_{|t|\le 2^j} \left|\rho_{R,N,\pm}\left(\Phi_{N}\!\left(t\right) (\FL^{s,p} \setminus B^{i,j}_{D, s,p})\right) -\rho_{R,N,\pm}\left( \FL^{s,p} \setminus B^{i,j}_{D, s,p}\right) \right| \le \varepsilon.
            \end{align}
As a result, we may estimate
\begin{align}
 & \sum_{h=-\left\lfloor \frac{2^{j} \langle D \sqrt{i+j} \rangle^{\beta}}{c} \right\rfloor}^{\left\lfloor \frac{2^{j} \langle D \sqrt{i+j} \rangle^{\beta}}{c} \right\rfloor}  \rho_{R,N,\pm} \left(\Phi_{N}\!\left(h c \langle D \sqrt{i+j} \rangle^{-\beta}\right) (\FL^{s,p} \setminus B^{i,j}_{D,s,p})\right) \\
  &\le 2\left\lfloor \frac{2^{j} \langle D \sqrt{i+j} \rangle^{\beta}}{c} \right\rfloor \left( \varepsilon +\rho_{R,N,\pm} \left( \FL^{s,p} \setminus B^{i,j}_{D, s,p}\right)\right) \\
  &\le  2\left\lfloor \frac{2^{j} \langle D \sqrt{i+j} \rangle^{\beta}}{c} \right\rfloor ( \varepsilon + K e^{-kD^2(i+j+1)}),
\end{align}
where we used \eqref{sub-gaussian bound on rho_N} to deduce the last line. Note that we also have the following bound, for any $j'\in\{0, \cdots, j\}$ and $N\ge N_{\varepsilon, j, R}$, by the same argument:
\begin{align}
    \rho_{R,N,\pm} (\F L^{s,p} \setminus \Sigma_{N_j, D, s,p}^{i, j'}) \le 2\left\lfloor \frac{2^{j'} \langle D \sqrt{i+j'} \rangle^{\beta}}{c} \right\rfloor ( \varepsilon + K e^{-kD^2(i+j'+1)}).
\end{align}
Next, we choose $\varepsilon= 2^{-D(i+j)}$ with $D>0$ large enough such that
\begin{align}
      \frac{2^{j'+1} \langle D \sqrt{i+j'} \rangle^{\beta}}{c} ( 2^{-D(i+j)} + K e^{-kD^2(i+j'+1)}) \le 2^{-i-j'}
\end{align}
for $j'\in\{1, \cdots j\}$. This then implies that
\begin{align}
     \rho_{R, N, \pm}(\F L^{s,p} \setminus \Sigma^{i,j'}_{N_j, D,s,p})\le 2^{-i-j'}, \quad \forall j'\in\{1, \cdots, j\}
\end{align}
for $N_j\ge N_{2^{-D(i+j)}, j, R}$ as desired. Hence we have proved \eqref{5}. 

We define
\begin{align}
    \tilde{\Sigma}^{i,j}_{N_j, D,  s, p}= \bigcap_{j'=0}^j \Sigma^{i,j'}_{N_j, D, s, p}
\end{align}
for which we have by our above bound, 
\begin{align}\label{Section 5 eq: bound on truncated measure of complement}
     \rho_{R, N_j, \pm}(\F L^{s,p} \setminus \tilde{\Sigma}^{i,j}_{N_j, D, s, p})\le 2^{-i}
\end{align}
uniformly in $j\ge 1$. 

Now we are ready to introduce the set whose existence was announced in the statement of Proposition \ref{Prop 5.3}. 
\begin{align} \label{Section 5 eq: definition of Sigma-s-p-i-R}
    \Sigma_{i,R}^{s,p} =\{u\in \F L^{s,p} \operatorname{s.t.} \exists j_k, L_k\to\infty , u_k\in \tilde{\Sigma}^{i, j_k}_{L_k, D, s,p} \operatorname{with } \| u-u_k\|_{\F L^{s,p}}\to 0\}.
\end{align}
Note that the above set depends on $R$ through its dependence on $D$ as $D$  must be taken large enough depending on $K$ which itself varies with $R$. We also have the inclusion
\begin{align} \label{Section 5 inclusion}
    \bigcap_{J\in \mathbb{N}} \left(\bigcup_{j\ge J} \tilde{\Sigma}^{i,j}_{N_j, s, p,D}\right) \subset \Sigma_{i,R}^{s,p}
\end{align}
where the sequence $N_j$ is provided by \eqref{5}. Now, we show that $\Sigma^{s,p}_{i,R}$ satisfies all the required properties. 

By an elementary diagonal argument, $\Sigma^{s,p}_{i,R}$ is closed. By the above inclusion, we have that
\begin{align}
    \rho_{R,\pm}(\F L^{s,p} \setminus\Sigma_{i,R}^{s,p}) &\le \rho_{R,\pm} \left( \F L^{s,p}\setminus\bigcap_{J\in \mathbb{N}} \left(\bigcup_{j\ge J} \tilde{\Sigma}^{i,j}_{N_j, s, p,D}\right) \right) \\
    &= \lim_{J\to\infty} \rho_{R,\pm} \left( \F L^{s,p}\setminus\bigcup_{j\ge J} \tilde{\Sigma}^{i,j}_{N_j, s, p,D}\right)
\end{align}
where we used the fact that $\bigcup_{j\ge J} \tilde{\Sigma}^{i,j}_{N_j, s, p,D}$ is a decreasing family in $J$. Then, by \eqref{truncated measure convergence}, 
\begin{align}
    \lim_{J \to \infty} \rho_{R,\pm}\left( \F L^{s,p} \setminus \bigcup_{j \geq J} \widetilde{\Sigma}_{N_j, D, s,p}^{i,j} \right)
&= \lim_{J \to \infty} \rho_{R,N_J,\pm}\left( \F L^{s,p} \setminus \bigcup_{j \geq J} \widetilde{\Sigma}_{N_j,D ,s,p}^{i,j} \right) \\
&\leq \limsup_{J \to \infty} \rho_{R,N_J,\pm}\left( \F L^{s,p} \setminus \widetilde{\Sigma}_{N_J,D, s, p}^{i,J} \right)\\
&\leq 2^{-i},
\end{align}
where we have used \eqref{Section 5 eq: bound on truncated measure of complement} in the last line. Finally, \eqref{Section 5 property of Sigma-s-p-i-R} comes from the definition of $\Sigma^{s,p}_{i,R}$ along with Lemma \ref{Lemma 5.2 equivalent}. 
\end{proof}

\subsection{Proof of Theorem \ref{Main Result}}

\subsubsection*{Proof of ($i$) in Theorem \ref{Main Result}}
We first introduce a set $\Sigma^{s,p} \subset \F L ^{s,p}$ satisfying items $(i), (ii)$ in Theorem \ref{Main Result}. For any $R>0$, define
\begin{align} \label{eq:Sigma(s,p,R)}
    \Sigma_R^{s,p} = \bigcup_{i\in \mathbb{N}}  \Sigma^{s,p}_{i,R},
\end{align}
where $\Sigma^{s,p}_{i,R}$ is defined in Proposition \ref{Prop 5.3} (see also \eqref{Section 5 eq: definition of Sigma-s-p-i-R}).
We have that $\Sigma_R^{s,p}$ is a $F_\sigma$ set and due to \eqref{2^(-i) bound}, we have
\begin{align} \label{Measure of Sigma is zero}
    \rho_{ R, \pm} \left(\F L^{s,p} \setminus \Sigma_R^{s,p}\right)=0.
\end{align}
Next, we take $R=j$ and define
\begin{align}
    \Sigma^{s,p}= \bigcup_{j\in \mathbb{N}} \Sigma_j^{s,p}.
\end{align}
Since we have that $\rho_{j, \pm} \left(\F L^{s,p} \setminus \Sigma_j^{s,p}\right)=0$ for each $j\ge 1$, we may conclude by Proposition \ref{Prop for complements of Borel Sets} that 
\begin{align}
    \mu(\FL^{s,p} \setminus \Sigma^{s,p})=0. 
\end{align}
and item $(i)$ of Theorem \ref{Main Result} is proved.

\subsubsection*{Proof of ($ii$) in Theorem \ref{Main Result}}
Based on the local Cauchy theory, as developed in Section \ref{Section:FiniteDimApproximations}, to prove item $(ii)$, it is enough to show that for every $u\in \Sigma^{s,p}$, its  $\FL^{s,p}$ norm is bounded on any compact set in time. We will in fact show a bound that is logarithmic in time. Thanks to \eqref{Section 5 property of Sigma-s-p-i-R}, for any $u\in \Sigma^{s,p}_{i,R}$, there exists $j_k, L_k\to\infty$ and a sequence $u_k\in \FL^{s,p}$ such that $u_k\to u$ with
\begin{align}
    \sup_{|\tau|< 2^{j'}} \| \Phi_{N_k}(\tau)u_k\|_{\FL^{s,p}} \le D\sqrt{i+1+j'}, \quad j'\in \{0, \cdots, j_k\}.
\end{align}
Then applying Proposition \ref{Long-time convergence proposition}, we deduce that there exists $\Phi(\tau)u\in C([-2^j,2^j], \FL^{s,p})$ where $\Phi(\tau)$ is the flow associated to \eqref{mKdV 2Cauchy} with data $u$ such that
\begin{align} \label{sec6:equ1}
    \sup_{|\tau|< 2^{j}} \| \Phi(\tau)u\|_{\FL^{s,p}} \le D\sqrt{i+1+j}, \quad j\in \mathbb{N}_0, u\in \Sigma^{s,p}_{i,R}.
\end{align}
Using the above bound with $j=l+1$ we show that
\begin{align}
     \sup_{2^l \le |\tau|< 2^{l+1}} \| \Phi(\tau)u\|_{\FL^{s,p}} \le D\sqrt{i+2+\log_2(2^l)}, \quad l\in \mathbb{N}_0, u\in \Sigma^{s,p}_{i,R},
\end{align}
which implies that
\begin{align}
     \sup_{2^l \le |\tau|< 2^{l+1}} \| \Phi(\tau)u\|_{\FL^{s,p}} \le D\sqrt{i+2+\log_2(|\tau|)}, \quad l\in \mathbb{N}_0, u\in \Sigma^{s,p}_{i,R}.
\end{align}
Since $l$ is arbitrary, we deduce that
\begin{align}\label{Section 5: bound on FL norm for |t|>1}
     \| \Phi(\tau )u\|_{\FL^{s,p}} \le D\sqrt{i+2+\log_2(|\tau|)}, \quad |\tau|\ge1, u\in \Sigma^{s,p}_{i,R}.
\end{align}
On the other hand, letting $j=0$ in \eqref{sec6:equ1}, we conclude that
\begin{align} \label{Section 5: bound on FL norm for |t|<1}
    \sup_{|\tau|< 1} \| \Phi(\tau)u\|_{\FL^{s,p}} \le D\sqrt{i+1}, \quad u\in \Sigma^{s,p}_{i,R}.
\end{align}
Combining \eqref{Section 5: bound on FL norm for |t|>1} and \eqref{Section 5: bound on FL norm for |t|<1} yields 
\begin{align}
\| \Phi(\tau )u\|_{\FL^{s,p}} &\le 1_{|\tau|<1} D\sqrt{i+1}+1_{|\tau|\ge1}D\sqrt{i+2+\log_2(1+|\tau|)}\\
&\le 1_{|\tau|<1} D\sqrt{i+2+ \log(1+|\tau|)} + 1_{|\tau|\ge1}D\sqrt{i+2+\log_2(1+|\tau|)} \\
&=D\sqrt{i+2+\log_2(1+|\tau|)}, \quad u\in \Sigma^{s,p}_{i,R}
\end{align}
for all $\tau\in \R$. Hence part $(ii)$ is proved.

\subsubsection*{Proof of item (iii) in Theorem \ref{Main Result}}
For $1/2 < s<1-1/p, \, \, 2<p<\infty$, we will construct a full measure set $\Sigma^{s,p}\subseteq \FL^{s,p}$ with respect to $\mu$ such that $\Phi(t)(\Sigma^{s,p})=\Sigma^{s,p}$. Furthermore, the measures $\rho_{R, \pm}$ are invariant when restricted to $\Sigma^{s,p}$, for every $R>0$.  

We fix $2<p<\infty$ and $R>0$ and for $\bar{s}\in (s, 1-1/p)$, we pick a sequence
\begin{align}
    s_l \nearrow \bar{s}, \quad s_l\in (s, 1-1/p).
    \end{align}
    Next, we consider the following subset of $\FL^{s,p}$, 
    \begin{align} \label{Def of Sigma^p_R}
        \Sigma_{R}^p:=\bigcap_{l\in \mathbb{N}} \Sigma^{s_l,p}_R, 
    \end{align}
    where $\Sigma^{s,p}_R$ is defined in \eqref{eq:Sigma(s,p,R)}. Note that since $s_l>s$, we have that $\Sigma^{s_l,p}_R \subset \FL^{s,p}$ for each $l$ and therefore $\Sigma^p_R \subset \FL^{s,p}$. 
    
    Now we can show that 
    \begin{align} \label{Null measure in Proof of item (iii)}
        \rho_{R,\pm}(\FL^{s,p}\setminus \Sigma^p_R)=0.
    \end{align} 
    Indeed, we have that
    \begin{align}
        \rho_{R,\pm} \left(\FL^{s,p} \setminus\Sigma^p_R\right) &= \rho_{R,\pm}\left(\bigcup_{l\in \mathbb{N}} \FL^{s,p} \setminus \Sigma^{s_l,p}_R\right)\\
        &\le \sum_{l\in \mathbb{N}} \rho_{R,\pm} \left(\FL^{s,p} \setminus\Sigma^{s_l,p}_R\right) \\
        &=\sum_{l\in \mathbb{N}} \rho_{R,\pm} \left(\FL^{s_l,p} \cap (\FL^{s,p} \setminus\Sigma^{s_l,p}_R)\right) \\
        &=\sum_{l\in \mathbb{N}} \rho_{R,\pm} \left( \FL^{s_l,p} \setminus\Sigma^{s_l,p}_R\right),
    \end{align}
where in the last two steps, we used the fact that $\rho_{R,\pm}(\FL^{s_l,p})=1$ for each $l\in \mathbb{N}$ and the fact that $\FL^{s_l,p} \subset \FL^{s,p}$ since $s_l>s$. However, 
   \begin{align}
       \rho_{R,\pm} \left( \FL^{s_l,p} \setminus\Sigma^{s_l,p}_R\right)=0, \quad l\in \mathbb{N}
   \end{align}
   due to \eqref{Measure of Sigma is zero}. Hence 
   \eqref{Null measure in Proof of item (iii)} is proved.
   
   Next we claim that the set $\Sigma^p_R$ is invariant under the flow $\Phi(t)$ for each $t>0$, where the existence of the flow follows from Proposition \ref{Long-time convergence proposition}. Once this is proved, it is sufficient to choose
   \begin{align} \label{Sigma sp}
       \Sigma^{s,p}= \bigcup_{R\in \mathbb{N}} \Sigma^p_R.
   \end{align}
Now, we proceed to prove invariance of $\Sigma^p_R$ under the flow $\Phi(t)$. Note that by the analysis done in the previous subsection, the flow $\Phi(t)$ is globally well defined on $\Sigma^p_R$. Therefore we only have to prove the invariance part of $\Sigma^{p}_R$ along $\Phi(t)$. To do so, we will show that
   \begin{align} \label{eq: invariance of Sigma under Phi}
       u\in \Sigma^{s_{l+1}, p}_R \implies \Phi(t) u\in \Sigma^{s_{l}, p}_R, \quad l\in \mathbb{N}.
   \end{align}
   In fact, once the above is proved, we have that given $u\in \Sigma^p_R$ in \eqref{Def of Sigma^p_R}, we have that $u\in \Sigma^{s_{l+1}, p}_R$ for every $l\in \mathbb{N}$ and therefore $\Phi(t)u\in \Sigma^{s_{l}, p}_R$ for each $l\in\mathbb{N}$, showing that $\Phi(t)u\in \Sigma^p_R$. 
   
   Notice that in the definition of $\Sigma^{s,p}_{i,R}$, the quantity $D$ may need to be taken large enough depending on $s$ via the constants $k,K$ that appear in the proof Proposition \ref{Prop: Properties of weighted measures} which do indeed depend on $s$. However, the key is that we are considering $s\in (1/2, \bar{s}]$, so the dependence may be taken only with respect to $\bar{s}$, and therefore uniform in $s$.

   We now prove \eqref{eq: invariance of Sigma under Phi}. We will assume $t>0$. A similar argument for $t<0$ may be applied. We first define
   \begin{align}
       \bar{j}(t) = \min\{j\in \mathbb{N}: 2^j\ge t\}.
   \end{align}
    Note that if $u_0\in \Sigma^{s_{l+1},p}_{R}$,  then the following holds by \eqref{eq:Sigma(s,p,R)} and \eqref{Section 5 eq: definition of Sigma-s-p-i-R}: 
    \begin{align}
\exists i \in \mathbb{N}, \quad u_k \in \mathcal{F}L^{s_{l+1},p}, \quad 
j_k \to \infty, \quad 
L_k \to \infty, \quad 
u_k \overset{\mathcal{F}L^{s_{l+1},p}}{\longrightarrow} u, \\
\sup_{|\tau|<2^{j'}} \bigl\| \Phi_{L_k}(\tau) u_k \bigr\|_{\mathcal{F}L^{s_{l+1},p}} 
\le D \sqrt{i+j'+1}, 
\quad \forall j' \in \{0,\dots,j_k\}.
\end{align}
    From this bound and the group property of $\Phi_N(t)$, we deduce that
    \begin{align} \label{eq:group bound}
\sup_{|t+\tau| < 2^{j'}} 
  \bigl\| \Phi_{L_k}(\tau)\bigl(\Phi_{L_k}(t) u_k\bigr) \bigr\|_{\mathcal{F}L^{s_{l+1},p}}
  \;\le\; D \sqrt{i + j' + 1},
  \qquad \forall j' \in \{0,\dots,j_k\}.
   \end{align}
    Next, we notice that for every $j'> \bar{j}(t)$, we have that
    \begin{align}
        [-2^{j'-1}, \, 2^{j'-1}] 
\;\subset\; [-t - 2^{j'}, \, -t + 2^{j'}], 
\qquad \forall j' > \bar{j}(t).
    \end{align}
    and thus from the bound \eqref{eq:group bound}, we obtain
    \begin{align} \label{5.22}
        \sup_{|\tau| < 2^{\,j'-1}} 
  \bigl\| \Phi_{L_k}(\tau)\bigl(\Phi_{L_k}(t) u_k\bigr) \bigr\|_{\mathcal{F}L^{s_{l+1},p}}
  \;\le\; D \sqrt{i + j' + 1},
  \qquad \forall j' \in \{\bar{j}+1,\dots,j_k\}.
    \end{align}
    However, letting $j'=\bar{j}(t)+1$ in the above, we find that
    \begin{align}
        \sup_{|\tau| < 2^{\bar{j}(t)}} 
  \bigl\| \Phi_{L_k}(\tau)\bigl(\Phi_{L_k}(t) u_k\bigr) \bigr\|_{\mathcal{F}L^{s_{l+1},p}}
  \;\le\; D \sqrt{i + \bar{j}(t) + 2},
    \end{align}
    which straightforwardly implies that 
    \begin{align}\label{5.23}
        \sup_{|\tau| < 2^{\,j'}} 
  \bigl\| \Phi_{L_k}(\tau)\bigl(\Phi_{L_k}(t) u_k\bigr) \bigr\|_{\mathcal{F}L^{s_{l+1},p}}
  \;\le\; D \sqrt{i + \bar{j} + 2 + j'},
  \qquad \forall j' \in \{1,\dots,\bar{j}\}.
    \end{align}
    By combining \eqref{5.22} and \eqref{5.23} with the fact that $s_{l+1}\ge s_l$, we find that
    \begin{align}
        \sup_{|\tau| < 2^{\,j'}} 
  \bigl\| \Phi_{L_k}(\tau)\bigl(\Phi_{L_k}(t) u_k\bigr) \bigr\|_{\mathcal{F}L^{s_l,p}}
  \;\le\; D \sqrt{i + \bar{j} + 2 + j'},
  \qquad \forall j' \in \{1,\dots,j_k-1\}.
    \end{align}
    Moreover, due to Proposition \ref{Long-time convergence proposition}, we have that $\Phi_{L_k}u_k\to \Phi(t)u$ in $\FL^{s_l, p}$. Therefore, by the definition \eqref{Section 5 eq: definition of Sigma-s-p-i-R}, \eqref{Def of Sigma^p_R}, with $\Phi_{L_k}u_k$ taking the role of $u_k$  we have that $\Phi(t)u\in \Sigma^{s_l,p}_{i+\bar{j}+2,R}$  and \eqref{eq: invariance of Sigma under Phi} follows. 
    
    Now the invariance of the measure $\rho_{R, \pm}$ on $\Sigma^{s,p}$, defined in \eqref{Sigma sp} follows by the same argument used for the Benjamin-Ono equation in \cite{TV-IMRN-13}.
    
\bibliographystyle{alpha}
\bibliography{References-Submission}

@misc{KNPSV,
      title={Well-posedness and invariant measures for complex valued modified KdV equation}, 
      author={Carlos E. Kenig and Andrea R. Nahmod and Nataša Pavlović and Gigliola Staffilani and Nicola Visciglia},
      year={2025},
      eprint={2501.14920},
      archivePrefix={arXiv},
      primaryClass={math.AP},
      url={https://arxiv.org/abs/2501.14920}, 
}

@article{Chapouto2021,
title = {A remark on the well-posedness of the modified KdV equation in the Fourier-Lebesgue spaces},
journal = {Discrete and Continuous Dynamical Systems},
volume = {41},
number = {8},
pages = {3915-3950},
year = {2021},
issn = {1078-0947},
doi = {10.3934/dcds.2021022},
url = {https://www.aimsciences.org/article/id/6a948838-8cea-4c63-b1c2-864fc22a9e0e},
author = {Andreia Chapouto}
}

@article{ChapoutoJDDE,
  author  = {Chapouto, Andreia},
  title   = {A Refined Well-Posedness Result for the Modified KdV Equation in the Fourier--Lebesgue Spaces},
  journal = {Journal of Dynamics and Differential Equations},
  volume  = {35},
  number  = {3},
  pages   = {2537--2578},
  year    = {2023},
  month   = sep,
  doi     = {10.1007/s10884-021-10050-0},
  url     = {https://doi.org/10.1007/s10884-021-10050-0}
}

@article{BourgainGAFA,
	author = {Bourgain, J.},
	title = {Fourier transform restriction phenomena for certain lattice subsets and applications to nonlinear evolution equations},
	journal = {Geometric \& Functional Analysis},
	volume = {3},
	number = {2},
	pages = {107--156},
	year = {1993},
	month = mar,
	doi = {10.1007/BF01896020},
	url = {https://doi.org/10.1007/BF01896020}
}

@article{bourgain1994,
  author       = {Jean Bourgain},
  title        = {Periodic nonlinear Schrödinger equation and invariant measures},
  journal      = {Communications in Mathematical Physics},
  volume       = {166},
  number       = {1},
  year         = {1994},
  pages        = {1--26},
  doi          = {10.1007/BF02099178},
  publisher    = {Springer}
}

@article {MR3119672,
    AUTHOR = {B\'enyi, \'Arp\'ad and Oh, Tadahiro},
     TITLE = {The {S}obolev inequality on the torus revisited},
   JOURNAL = {Publ. Math. Debrecen},
  FJOURNAL = {Publicationes Mathematicae Debrecen},
    VOLUME = {83},
      YEAR = {2013},
    NUMBER = {3},
     PAGES = {359--374},
      ISSN = {0033-3883,2064-2849},
   MRCLASS = {42B35 (42B10 42B25)},
  MRNUMBER = {3119672},
MRREVIEWER = {Anna\ K.\ Savvopoulou},
       DOI = {10.5486/PMD.2013.5529},
       URL = {https://doi.org/10.5486/PMD.2013.5529},
}

@book{ST,
	address = {Chichester ;},
	author = {Schmeisser, Hans-J{\"u}rgen. and Triebel, Hans.},
	booktitle = {Topics in Fourier analysis and function spaces},
	isbn = {0471908959},
	language = {eng},
	lccn = {85022706},
	publisher = {John Wiley \& Sons},
	title = {Topics in Fourier analysis and function spaces / Hans-J{\"u}rgen Schmeisser, Hans Triebel.},
	year = {1987}}

@article{DNY21,
  author  = {Deng, Yu and Nahmod, Andrea R. and Yue, Haitian},
  title   = {Optimal Local Well-Posedness for the Periodic Derivative Nonlinear Schr{\"o}dinger Equation},
  journal = {Communications in Mathematical Physics},
  volume  = {384},
  number  = {2},
  pages   = {1061--1107},
  year    = {2021},
  month   = jun,
  doi     = {10.1007/s00220-020-03898-8},
  url     = {https://doi.org/10.1007/s00220-020-03898-8}
}

@book{Bogachev2015Gaussian,
  author    = {Bogachev, Vladimir I.},
  title     = {Gaussian Measures},
  series    = {Mathematical Surveys and Monographs},
  volume    = {62},
  publisher = {American Mathematical Society},
  address   = {Providence, RI},
  year      = {2015},
}

@article{TV-ASENS-13,
     author = {Tzvetkov, Nikolay and Visciglia, Nicola},
     title = {Gaussian measures associated to the higher order conservation laws of the {Benjamin-Ono} equation},
     journal = {Annales scientifiques de l'\'Ecole Normale Sup\'erieure},
     pages = {249--299},
     publisher = {Soci\'et\'e math\'ematique de France},
     volume = {Ser. 4, 46},
     number = {2},
     year = {2013},
     doi = {10.24033/asens.2189},
     language = {en},
     url = {https://www.numdam.org/articles/10.24033/asens.2189/}
}

@article{TV-IMRN-13,
    author = {Tzvetkov, Nikolay and Visciglia, Nicola},
    title = {Invariant Measures and Long-Time Behavior for the Benjamin–Ono Equation},
    journal = {International Mathematics Research Notices},
    volume = {2014},
    number = {17},
    pages = {4679-4714},
    year = {2013},
    month = {05},
    issn = {1073-7928},
    doi = {10.1093/imrn/rnt094},
    url = {https://doi.org/10.1093/imrn/rnt094},
    eprint = {https://academic.oup.com/imrn/article-pdf/2014/17/4679/18882421/rnt094.pdf},
}

@article {TV-JMPA-15,
    AUTHOR = {Tzvetkov, Nikolay and Visciglia, Nicola},
     TITLE = {Invariant measures and long time behaviour for the
              {B}enjamin-{O}no equation {II}},
   JOURNAL = {J. Math. Pures Appl. (9)},
  FJOURNAL = {Journal de Math\'ematiques Pures et Appliqu\'ees. Neuvi\`eme
              S\'erie},
    VOLUME = {103},
      YEAR = {2015},
    NUMBER = {1},
     PAGES = {102--141},
      ISSN = {0021-7824,1776-3371},
   MRCLASS = {37L40 (37K10)},
  MRNUMBER = {3281949},
MRREVIEWER = {Naoyuki\ Ishimura},
       DOI = {10.1016/j.matpur.2014.03.009},
       URL = {https://doi.org/10.1016/j.matpur.2014.03.009},
}

@book{FaddeevTakhtajan2007,
  author    = {L. D. Faddeev and L. A. Takhtajan},
  title     = {Hamiltonian Methods in the Theory of Solitons},
  series    = {Classics in Mathematics},
  publisher = {Springer},
  address   = {Berlin},
  year      = {2007},
  edition   = {English ed.},
  isbn      = {978-3-540-69969-9},
}

@article{NahmodOhReyBelletStaffilani2012,
  author    = {Andrea R. Nahmod and Tadahiro Oh and Luc Rey-Bellet and Gigliola Staffilani},
  title     = {Invariant weighted Wiener measures and almost sure global well-posedness for the periodic derivative NLS},
  journal   = {Journal of the European Mathematical Society},
  volume    = {14},
  number    = {4},
  pages     = {1275--1330},
  year      = {2012},
  doi       = {10.4171/JEMS/333},
  note      = {Submitted 6 January 2011, Published 3 June 2012}
}

@article{DengNahmodYue2024Gibbs2D,
  author  = {Deng, Yu and Nahmod, Andrea R. and Yue, Haitian},
  title   = {Invariant Gibbs measures and global strong solutions for nonlinear Schr\"odinger equations in dimension two},
  journal = {Annals of Mathematics},
  volume  = {200},
  number  = {2},
  pages   = {399--486},
  year    = {2024},
  doi     = {10.4007/annals.2024.200.2.1}
}

@article{DengNahmodYue2022RandomTensors,
  author  = {Deng, Yu and Nahmod, Andrea and Yue, Haitian},
  title   = {Random tensors, propagation of randomness and nonlinear dispersive equations},
  journal = {Inventiones Mathematicae},
  volume  = {228},
  year    = {2022},
  pages   = {539--686}
}

@article{Palais1997,
  author  = {Palais, Richard S.},
  title   = {The symmetries of solitons},
  journal = {Bulletin of the American Mathematical Society (New Series)},
  volume  = {34},
  number  = {4},
  year    = {1997},
  pages   = {339--403},
  doi     = {10.1090/S0273-0979-97-00732-5},
  note    = {MR1462745}

}

@article{LebowitzRoseSpeer1988,
  author  = {Lebowitz, Joel L. and Rose, Howard A. and Speer, Elliott R.},
  title   = {Statistical mechanics of the nonlinear Schr{\"o}dinger equation},
  journal = {Journal of Statistical Physics},
  year    = {1988},
  volume  = {50},
  number  = {3--4},
  pages   = {657--687},
  doi     = {10.1007/BF01026468}
}

@article{TT-2010,
  author    = {Thomann, Laurent and Tzvetkov, Nikolay},
  title     = {Gibbs measure for the periodic derivative nonlinear Schrödinger equation},
  journal   = {Nonlinearity},
  volume    = {23},
  number    = {11},
  pages     = {2771},
  year      = {2010},
  date      = {2010-10},
  doi       = {10.1088/0951-7715/23/11/003},
  url       = {https://doi.org/10.1088/0951-7715/23/11/003}
}
\end{document}